 \newtheorem{thm}{Theorem}
 \newtheorem{cor}[thm]{Corollary}
 \newtheorem{lem}[thm]{Lemma}
 \newtheorem{prop}[thm]{Proposition}
 \theoremstyle{definition}
 \newtheorem{defn}[thm]{Definition}
 \theoremstyle{definition}
 \newtheorem{notn}[thm]{Notation}
  \theoremstyle{definition}
 \theoremstyle{remark}
 \newtheorem{rem}[thm]{Remark}
 \theoremstyle{remark}
 \theoremstyle{definition}
 \newtheorem{example}[thm]{Example}
\numberwithin{thm}{section}
\numberwithin{equation}{section}
 \newcommand{\id}{\mathrm{id}}
 \newcommand{\ab}{\mathrm{ab}}
 \newcommand{\Hom}{\mathrm{Hom}}
 \newcommand{\Fix}{\mathrm{Fix}}
 \newcommand{\Spec}{\mathrm{Spec}}
 \newcommand{\Frob}{\mathrm{Frob}}
 \newcommand{\Aut}{\mathrm{Aut}}
 \newcommand{\End}{\mathrm{End}}
 \newcommand{\Pic}{\mathrm{Pic}}
 \newcommand{\ord}{\mathrm{ord}}
 \newcommand{\Gal}{\mathrm{Gal}}
 \newcommand{\GL}{\mathrm{GL}}
 \newcommand{\sep}{\mathrm{sep}}
 \newcommand{\alg}{\mathrm{alg}}
 \newcommand{\coker}{\mathrm{coker}}
 \newcommand{\rank}{\mathrm{rank}}
 \newcommand{\opp}{\mathrm{opp}}
\newcommand{\inv}{\mathrm{inv}}
 \newcommand{\Lie}{\mathrm{Lie}}
 \newcommand{\Ram}{\mathrm{Ram}}
 \newcommand{\chr}{\mathrm{char}}
 \renewcommand{\mod}{\mathrm{mod}}
 \newcommand{\diag}{\mathrm{diag}}
 \newcommand{\fp}{\mathfrak p}
 \newcommand{\fr}{\mathfrak r}
 \newcommand{\fn}{\mathfrak n}
 \newcommand{\fm}{\mathfrak m}
 \newcommand{\fc}{\mathfrak c}
 \newcommand{\fP}{\mathfrak P}
 \newcommand{\cO}{\mathcal{O}}
 \newcommand{\cV}{\mathcal{V}}
 \newcommand{\cM}{\mathcal{M}}
 \renewcommand{\cH}{\mathcal{H}}
 \newcommand{\cE}{\mathcal{E}}
 \newcommand{\cA}{\mathcal{A}}
 \newcommand{\cB}{\mathcal{B}}
 \newcommand{\cG}{\mathcal{G}}
 \newcommand{\cS}{\mathcal{S}}
 \renewcommand{\cL}{\mathcal{L}}
\newcommand{\sD}{\mathscr{D}}
 \newcommand{\gm}{\mathbb{G}}
 \newcommand{\C}{\mathbb{C}}
 \newcommand{\F}{\mathbb{F}}
 \newcommand{\Q}{\mathbb{Q}}
 \newcommand{\Z}{\mathbb{Z}}
 \newcommand{\E}{\mathbb{E}}
 \newcommand{\A}{\mathbb{A}}
 \newcommand{\p}{\mathbb{P}}
 \newcommand{\T}{\mathbb{T}}
 \newcommand{\M}{\mathbb{M}}
 \newcommand{\Nr}{\mathrm{Nr}}
 \newcommand{\G}{\Gamma}
 \newcommand{\To}{\longrightarrow}
 \newcommand{\bs}{\setminus}
 \newcommand{\Fi}{F_\infty}
 \newcommand{\La}{\Lambda}
 \newcommand{\la}{\lambda}
 \newcommand{\tF}{\widetilde{F}}
 \newcommand{\twist}[1]{{^\tau}\!#1}
  \newcommand{\DES}{\mathbf{DES}}
  \newcommand{\DMot}{\mathbf{DMot}}
\begin{document}

\title[Drinfeld-Stuhler modules]{Drinfeld-Stuhler modules}

\author{Mihran Papikian}
\address{Department of Mathematics, Pennsylvania State University, University Park, PA 16802, U.S.A.}
\email{papikian@psu.edu}

\thanks{The author's research was partially supported by grants from the Simons Foundation (245676) and the National Security Agency 
(H98230-15-1-0008).} 

%\dedicatory{In memory of David Goss}

\subjclass[2010]{11G09, 11R52}

\keywords{Drinfeld modules, $\sD$-elliptic sheaves, central division algebras, fields of moduli}

\begin{abstract} We study $\sD$-elliptic sheaves in terms 
of their associated modules, which we call Drinfeld-Stuhler modules. First, we 
prove some basic results about Drinfeld-Stuhler modules and give explicit examples. Then we 
examine the existence and properties of Drinfeld-Stuhler modules 
with large endomorphism rings, which are analogous to CM and supersingular Drinfeld modules. 
Finally, we examine the fields of moduli of Drinfeld-Stuhler modules. 
\end{abstract}

\maketitle

% ----------------------------------------------------------------------

\section{Introduction} The idea of $\sD$-elliptic sheaves was proposed by Ulrich Stuhler, 
as a natural generalization of Drinfeld's elliptic sheaves \cite{DrinfeldES}, \cite{BS}. The moduli 
varieties of $\sD$-elliptic sheaves were studied by Laumon, Rapoport and Stuhler in \cite{LRS}, 
with the aim of proving the local Langlands correspondence for $\GL_d$ in positive characteristic.  
In this paper, we study some of the arithmetic properties of $\sD$-elliptic sheaves, and in particular
their endomorphism rings and fields of moduli. 

Let $C$ be a smooth, projective, geometrically connected curve over the finite field $\F_q$. 
Let $F$ be the function field of $C$. 
Let $\infty\in C$ be a fixed closed point, and $A\subset F$ be the ring of functions regular outside $\infty$. 
Denote by $\Fi$ the completion of $F$ at $\infty$, and by $\C_\infty$ the completion of an algebraic closure of $\Fi$. 
Let $D$ be a central simple algebra over $F$ of dimension $d^2$, which is split at $\infty$, i.e., 
$D\otimes_F F_\infty$ is isomorphic to the matrix algebra $ M_d(F_\infty)$. 
Fix a maximal $A$-order $O_D$ in $D$. 
An \textit{$A$-field} is a field $L$ equipped with an $A$-algebra structure, i.e., with a homomorphism $\gamma: A\to L$. 
A $\sD$-elliptic sheaf over an $A$-field $L$ is essentially a vector bundle of rank $d^2$ on $C\times_{\Spec(\F_q)}\Spec(L)$  
equipped with an action of $O_D$ and with a meromorphic $O_D$-linear Frobenius 
satisfying certain conditions (see Section \ref{sMotSht}). One can think of these objects 
as being analogous to abelian varieties equipped with an action of an order 
in a central simple algebra over $\Q$. 

In this paper, we study $\sD$-elliptic sheaves in terms of their associated modules, which we call 
\textit{Drinfeld-Stuhler modules}. The relationship between $\sD$-elliptic sheaves and 
Drinfeld-Stuhler modules is similar to the relationship between elliptic sheaves and Drinfeld modules; cf. \cite{DrinfeldES}, \cite{BS}. 
Let $L$ be an $A$-field, and 
$L[\tau]$ be the skew polynomial ring with the commutation relation $\tau b=b^q\tau$, $b\in L$. A 
Drinfeld-Stuhler $O_D$-module over $L$ is an embedding 
$$
\phi: O_D\To M_d(L[\tau])
$$
satisfying certain conditions; see $\S$\ref{ssD&BP}. 
This concept implicitly appears in \cite[$\S$3]{LRS}, although it 
does not play an important role in that paper since its ``shtuka'' incarnation (the $\sD$-elliptic sheaf) seems  
better suited for the study of moduli spaces. 
The advantage of the concept of Drinfeld-Stuhler module is that it is relatively elementary, 
and one can easily write down explicit examples of these objects. % (see $\S$\ref{ssExamples}). 
We expect that the reader familiar with the theory of Drinfeld modules, but not necessarily 
with \cite{LRS}, will find it easier to understand the results of this paper in terms of 
Drinfeld-Stuhler modules, rather than $\sD$-elliptic sheaves. 

The theory of Drinfeld modules can be seen as a 
special case of the theory of Drinfeld-Stuhler modules for $D\cong M_d(F)$; cf. $\S$\ref{sME}. 
%and Remarks \ref{rem4.2}, \ref{rem5.6DMsimilar2}. 
In the other direction, some of the properties of general Drinfeld-Stuhler modules are similar to, and in fact can be deduced from, the 
properties of Drinfeld modules, e.g., uniformizability and CM theory. 
There are also some notable 
differences. The most significant is probably the fact that, when $D$ is a division algebra, the modular varieties of  Drinfeld-Stuhler modules 
are projective \cite{LRS}, unlike the Drinfeld modular varieties, which are affine \cite{Drinfeld}. Another difference is that 
Drinfeld-Stuhler modules can be defined only over fields which split $D$ (cf. Lemma \ref{lemFD}), so for $D\not\cong M_d(F)$ 
there are no Drinfeld-Stuhler modules over $F$ itself, even in the simplest case when $A=\F_q[T]$. 

The main results of this paper 
concern the endomorphism rings of  
Drinfeld-Stuhler modules and their fields of moduli. The outline of the paper is the following: 

In Section \ref{sBasics}, we introduce the concept of Drinfeld-Stuhler $O_D$-modules and prove some of its basic properties. 
Moreover, we give several explicit examples, which will be revisited throughout the paper. Finally, we explain the 
so-called Morita equivalence in the context of Drinfeld-Stuhler modules, which gives 
an equivalence between the categories of Drinfeld $A$-modules of rank $d$ 
and Drinfeld-Stuhler $M_d(A)$-modules. 

In Section \ref{sMotSht}, we recall three categories equivalent to the category of Drinfeld-Stuhler modules: $O_D$-motives, $\sD$-elliptic sheaves, 
and $O_D$-lattices.  The tools provided by these alternative points of view on Drinfeld-Stuhler modules are crucial for the 
proofs of the main results of the paper. 

In Section \ref{sCM}, given a Drinfeld-Stuhler $O_D$-module $\phi$ over $L$, we prove that  the endomorphism ring 
$\End_L(\phi)$(= the centralizer of $\phi(O_D)$ in $M_d(L[\tau])$) is a projective $A$-module 
of rank $\leq d^2$ such that $\End_L(\phi)\otimes_A \Fi$ is isomorphic to a subalgebra of the central 
division algebra over $\Fi$ with invariant $-1/d$. 
Moreover, if $\gamma: A\to L$ is injective, then $\End_L(\phi)$ is an $A$-order 
in an \textit{imaginary} field extension $K$ of $F$ which embeds into $D$, so, in particular, 
$\End_L(\phi)$ is commutative and its rank over $A$ divides $d$. 
 (``Imaginary'' in this context means 
that there is a unique place $\infty'$ of $K$ over $\infty$.) 
Next, we study Drinfeld-Stuhler modules over $\C_\infty$ with large endomorphism rings, namely the 
analogue of ``complex multiplication''. The results here are similar 
to those for Drinfeld modules; cf. \cite{GekelerADM}, \cite{HayesCFT}. We prove 
that if $K$ is an imaginary field extension of $F$ of degree $d$ which embeds into $D$ and $O_K$ is the integral closure of $A$ in $K$, 
then, up to isomorphism, the number of Drinfeld-Stuhler $O_D$-modules over $\overline{F}$
with $\End_{\overline{F}}(\phi)=O_K$ is finite and non-zero, and any such module 
can be defined over the Hilbert class field of $K$(= the maximal unramified abelian extension of $K$ in which $\infty'$ totally splits).%; see Theorem \ref{propCM}.
 We also compute the number of isomorphism classes of Drinfeld-Stuhler modules over $\C_\infty$
with the largest possible automorphism group $\F_{q^d}^\times$, assuming $A=\F_q[T]$. %; see Proposition \ref{propLargeAut}.  

In Section \ref{sSS}, we fix a maximal ideal $\fp\lhd A$ and study Drinfeld-Stuhler $O_D$-modules over the algebraic closure 
of $A/\fp$ with large endomorphism rings, namely the 
analogue of ``supersingularity''. It turns out that the cases when $D$ is ramified/unramified at $\fp$ have to be studied separately, but 
in either case the endomorphism ring of a supersingular Drinfeld-Stuhler module 
is essentially a maximal $A$-order in the central division algebra over $F$ of dimension $d^2$ 
whose invariants are closely related to the invariants of $D$. These results are again similar 
to those for Drinfeld modules; cf. \cite{GekelerFDM}.  
This potentially opens up the way to use Drinfeld-Stuhler modules in the arithmetic 
of division algebras over function fields. 

In Section \ref{sFM}, we prove a Hilbert's 90-th type theorem for $M_d(L^\sep[\tau])$, and use this 
theorem to give conditions under which a field of moduli for a Drinfeld-Stuhler module is a field of definition. 
In particular, we prove that a field of moduli is a field of definition if and only if 
it splits $D$. We also prove that if $d$ and $q^d-1$ are coprime, then a field of moduli is always a field of definition. 
These results have applications to the existence/non-existence of rational points on 
the coarse moduli scheme of Drinfeld-Stuhler modules. An interesting application of this is the construction of concrete 
examples of varieties over function fields violating the Hasse principle; we will discuss this application in a future publication.   
%has no $L$-rational points for field extensions $L/F$ which do not split $D$, assuming $d$ and $q^d-1$ are coprime. 
%These results also imply that a Drinfeld $A$-module of arbitrary rank can be defined over its field of moduli. 

% ----------------------------------------------------------------------

\section{Basic properties and examples}\label{sBasics}

In this section, after introducing the notation and terminology that will be used throughout the paper, we define the 
key concept of Drinfeld-Stuhler module. We then examine the basic properties of these objects and give explicit examples. 

\subsection{Notation and terminology}
Let $F$ be the field of rational functions on a smooth and geometrically irreducible projective curve $C$ defined over 
the finite field $\F_q$ of $q$ elements, where $q$ is a power of a prime number. 
Fix a place $\infty$ of $F$ (equiv. a closed point of $C$), and let $A$ be the subring of $F$ consisting of functions which are regular away from $\infty$.  
$A$ is a Dedekind domain. 
%We say that two elements or ideals of $A$ are coprime if the ideal generated by them is $A$; we say that $a$ divides $b$ if there is $c\in A$ such that $b=ac$.
 
 An \textit{imaginary} field extension of $F$ is an extension $K/F$ in which $\infty$ does not split, i.e., there is a unique place $\infty'$
 of $K$ over $\infty$.    
For a field $L$ we denote by $L^\alg$ (resp. $L^\sep$) its algebraic (resp. separable) closure. 

For a place $v$ of $F$, we denote by $F_v$, $O_v$, $\F_v$ the completion of $F$ at $v$, the ring of integers in $F_v$, 
and the residue field at $v$, respectively. If $v\neq \infty$, so corresponds to a non-zero prime ideal $\fp$ of $A$, 
we sometimes write $A_\fp$ or $A_v$ instead of $O_v$, and $\F_\fp$ instead of $\F_v$. 
The maximal ideal of $O_v$ will be denoted $\fp$ if $v=\fp$ is finite, and $\fp_\infty$ if $v=\infty$.  
%We normalize the valuation $\ord_v: F_v\to \Z$ so that $\ord_v(\pi_v)=1$ for a uniformizer $\pi_v$ of $O_v$. 

Given a unitary ring $R$, we denote by $R^\times$ the group of multiplicative units in $R$. Let 
$M_d(R)$ be the ring of $d\times d$ matrices with entries in $R$; 
the group of units in $M_d(R)$ is denoted by $\GL_d(R)$. Given $r_1, \dots, r_d\in R$, we denote by 
$\diag(r_1, \cdots, r_d)\in M_d(R)$ the matrix which has $r_i$ as the $(i, i)$-th entry, $1\leq i\leq d$, 
and zeros everywhere else. 

Let $D$ be a central simple algebra over $F$ of dimension $d^2$. 
Let $\Ram(D)$ be the set of places of $F$ which ramify in $D$, i.e., $v\in \Ram(D)$ 
if and only if $D_v:=D\otimes_F F_v$ is not isomorphic to $M_d(F_v)$. From now on we assume 
that $\infty\not \in \Ram(D)$, so that the places in $\Ram(D)$ correspond to prime ideals of $A$. We denote 
$$
\fr(D)=\prod_{\fp\in \Ram(D)} \fp. 
$$
An empty product is assumed to be $1$, so $\fr(M_d(F))=A$. 
We fix a maximal $A$-order $O_D$ in $D$; see \cite{Reiner} for the definitions. Note that $A$ is the center of $O_D$. 

Let $L$ be an $A$-field, i.e., a field equipped with an $A$-algebra structure $\gamma:A\to L$. 
The \textit{$A$-characteristic of $L$} is the prime ideal $\chr_A(L):=\ker(\gamma)\lhd A$; we say that $L$ 
has \textit{generic $A$-characteristic} if $\ker(\gamma)=0$. %We assume throughout that $\chr_A(L)$ does not divide $\fr(D)$.  
Note that $\F_q$ is a subfield of $L$.  Let $\tau$ be the $\F_q$-linear Frobenius endomorphism of the 
additive group-scheme $\gm_{a,L}=\Spec(L[x])$ over $L$; the morphism $\tau$ is given on the underlying ring by $x\mapsto x^q$. 
The ring of $\F_q$-linear endomorphisms $\End_{\F_q}(\gm_{a, L})$ is canonically isomorphic to the 
skew polynomial ring $L[\tau]$ with the commutation relation $\tau b=b^q\tau$, $b\in L$. 
There is also a canonical isomorphism (cf. \cite[Prop. 1.1]{vdHeiden})
$$
\End_{\F_q}\left(\gm_{a,L}^d\right)\cong M_d\left(L[\tau]\right). 
$$
One can write the elements of $M_d(L[\tau])$ as finite sums $\sum_{i\geq 0} B_i\tau^i$, where $B_i\in M_d(L)$ 
and, by slight abuse of notation, $\tau^i$ denotes $\diag(\tau^i, \dots, \tau^i)$. 
An element $S=\sum_{i\geq 0} B_i\tau^i\in M_d(L[\tau])$ acts on the tangent space $\Lie(\gm_{a, L}^d)\cong L^d$ 
via $\partial(S):=B_0$. The map
$$\partial: M_d(L[\tau])\To M_d(L), \quad S\longmapsto \partial(S)$$ is a surjective homomorphism. 

\subsection{Definitions and basic properties}\label{ssD&BP}
\begin{defn}\label{defDSM}
A \textit{Drinfeld-Stuhler $O_D$-module} defined over $L$ is an embedding 
\begin{align*}
\phi: O_D &\To M_d(L[\tau]) \\ 
b &\longmapsto \phi_b
\end{align*}
satisfying the following conditions: 
\begin{itemize}
\item[(i)] For any $b\in O_D\cap D^\times$ the endomorphism $\phi_b$ of $\gm_{a, L}^d$ 
is surjective with kernel $\phi[b]:=\ker \phi_b$ a finite group scheme over $L$ of order $\# (O_D/O_D\cdot b)$. 
\item[(ii)] The composition 
$$
A\To O_D\overset{\phi}{\To} M_d(L[\tau]) \overset{\partial}{\To} M_d(L)
$$
maps $a\in A$ to $\diag(\gamma(a), \dots, \gamma(a))$, where the map $A\to O_D$ identifies $A$ with the center of $O_D$.  
\end{itemize}
\end{defn}

\begin{rem}
	A homomorphism $f: \gm_{a, L}^d \to \gm_{a, L}^d$ is surjective if and only if $\ker f$ is finite; cf. \cite[Prop. 5.2]{HartlIsog}. In particular,  
	(i) in Definition \ref{defDSM} can be simplified to the assumption that $\#\phi[b]=\# (O_D/O_D\cdot b)$ for all $b\in O_D\cap D^\times$. 
\end{rem}

\begin{rem} Drinfeld-Stuhler modules are a special case of abelian Anderson $A$-modules, as follows from \cite[$\S$1]{Anderson} and 
	the discussion in Section \ref{sMotSht}. 
	If $d=1$, so that $D=F$, then the definition of Drinfeld-Stuhler $O_D$-modules becomes the definition of Drinfeld $A$-modules of rank $1$. 
	One can introduce a notion of rank for Drinfeld-Stuhler modules so that Definition \ref{defDSM} corresponds to the 
	case of rank $1$. Lafforgue studied the $\sD$-shtukas of arbitrary rank and their modular varieties in \cite{Lafforgue}. 
\end{rem}	
\begin{rem}Let $b\in O_D\cap D^\times$. If we consider $D$ as a vector space over $F$, then the left multiplication by $b$ 
	induces a linear transformation. Let $\det(b)$ denote the determinant of this linear transformation. 
	Note that $O_D$ is an $A$-lattice in $D$ in the sense of \cite[Ch. III, $\S$1]{SerreLF}. 
	By Proposition 3 in \cite[Ch. III, $\S$1]{SerreLF}, we have $\# (O_D/O_Db)=\# (A/\det(b)A)$. 
	Finally, recall that $(-1)^d \det(b)=: \Nr(b)$ is the \textit{non-reduced norm} of $b$; cf. \cite[$\S$9a]{Reiner}.
	Hence condition (i) is equivalent to saying that $\phi[b]$ is a finite group scheme of order $\#(A/\Nr(b)A)$. 
\end{rem}

%\begin{rem}\label{rem3}
The action of $\phi(O_D)$ on the tangent space $\Lie(\gm_{a, L}^d)$ gives a 
homomorphism $$\partial_\phi: O_D\To M_d(L),$$ which extends linearly 
to a homomorphism 
$$
\partial_{\phi, L}: O_D\otimes_A L \To M_d(L). 
$$
\begin{lem}\label{lemFD} If $\chr_A(L)$ does not divide $\fr(D)$, then 
$\partial_{\phi, L}$ is an isomorphism.  
\end{lem}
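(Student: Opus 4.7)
\medskip

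The plan is to compare $L$-dimensions on both sides and use simplicity of the source. Both $O_D \otimes_A L$ and $M_d(L)$ have $L$-dimension $d^2$: indeed, since $O_D$ is a maximal $A$-order in the $d^2$-dimensional $F$-algebra $D$, it is locally free of rank $d^2$ as an $A$-module (using that $A$ is Dedekind), so $O_D \otimes_A L$ has $L$-dimension $d^2$. Thus it suffices to show that $\partial_{\phi, L}$ is injective, and for this I will show that $O_D \otimes_A L$ is a simple $L$-algebra, whence any nonzero ring homomorphism out of it has zero kernel.

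To verify simplicity, split into cases according to $\chr_A(L)$. If $L$ has generic $A$-characteristic, the map $\gamma$ extends to an embedding $F \hookrightarrow L$, so $O_D \otimes_A L \cong D \otimes_F L$, which is a central simple $L$-algebra by the standard base-change property of central simple algebras. If $\chr_A(L) = \fp \neq 0$, then by hypothesis $\fp \notin \Ram(D)$, so $D_\fp \cong M_d(F_\fp)$ and the localization $O_D \otimes_A A_\fp$ is a maximal $A_\fp$-order in $D_\fp$, hence conjugate to $M_d(A_\fp)$. Reducing mod $\fp$ and tensoring with $L$ over $\F_\fp = A/\fp$ gives $O_D \otimes_A L \cong M_d(L)$, which is again central simple over $L$. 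In either case, $O_D \otimes_A L$ has no nonzero proper two-sided ideals.

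Finally, $\partial_{\phi, L}$ sends $1 \otimes 1$ to the identity matrix (since $\phi_1 = I$), so it is a nonzero ring homomorphism out of a simple ring, hence injective. Combined with the dimension count $d^2 = d^2$, this forces $\partial_{\phi, L}$ to be an isomorphism. I do not anticipate a serious obstacle: the content of the lemma is really packaged into the simplicity of $O_D \otimes_A L$, which in turn rests on the hypothesis $\chr_A(L) \nmid \fr(D)$. As a byproduct one immediately sees that whenever a Drinfeld-Stuhler $O_D$-module exists over $L$, the field $L$ must split $D$, since the isomorphism $O_D \otimes_A L \cong M_d(L)$ presents $L$ as a splitting field.
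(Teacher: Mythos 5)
Your proof is correct and follows essentially the same route as the paper's: in both arguments the key point is that $O_D\otimes_A L$ is a central simple $L$-algebra of dimension $d^2$ (handled by the same case split on $\chr_A(L)$, with $O_D\otimes_A A_\fp\cong M_d(A_\fp)$ in the finite-characteristic case), so the unital, hence nonzero, map $\partial_{\phi,L}$ is injective and a dimension count finishes. The only cosmetic difference is that you make the dimension count and the maximal-order conjugacy step explicit where the paper leaves them implicit.
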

\begin{proof}
Both sides are rings with $1$, so $\partial_{\phi, L}$ is non-zero, as it maps $1$ to $1$. 
%$\partial_{\phi, L}$ is non-zero, since the restriction of $\partial_{\phi, L}$ to $A$ is non-zero by assumption (ii). 
If $L$ has generic $A$-characteristic, then $L$ is an extension of $F$, hence 
$O_D\otimes_A L$ is a central simple algebra over $L$. Therefore, $\partial_{\phi, L}$ is injective, and comparing the dimensions we see 
that it is in fact an isomorphism. %In particular, this implies that $L$ splits $D$. 
Now assume that $\chr_A(L)=\fp\neq 0$. 
Then $O_D\otimes_A L$ is obtained by extension of scalars from $O_D\otimes_A A_\fp\to O_D\otimes_A \F_\fp$. 
On the other hand, $O_D\otimes_A A_\fp\cong M_d(A_\fp)$ since $\fp\nmid \fr(D)$. 
Now it is clear that $O_D\otimes_A \F_\fp\cong M_d(\F_\fp)$, hence $O_D\otimes_A L\cong M_d(\F_\fp)\otimes_{\F_\fp}L\cong M_d(L)$. 
Since $M_d(L)$ is a central simple algebra over $L$, the previous argument again implies that $\partial_{\phi, L}$ is an isomorphism. 
\end{proof}

\begin{rem}\label{remNotMd}
	If $\chr_A(L)$ divides $\fr(D)$, then 
	$\partial_{\phi, L}$ is not an isomorphism, since  $O_D\otimes_A L$ is not isomorphic to $M_d(L)$; cf. $\S$\ref{ssSScase2}.  
\end{rem}

\begin{rem}\label{remSplitD}
We recall some necessary and sufficient conditions for a finite field extension $L$ of $F$ 
to \textit{split} $D$, i.e., $D\otimes_F L\cong M_d(L)$, 
since, by Lemma \ref{lemFD}, if there is a Drinfeld-Stuhler $O_D$-module defined over $L$, then $L$ necessarily splits $D$. 
(In particular, if $D\not\cong M_d(F)$, then a Drinfeld-Stuhler module cannot be defined over $F$ itself.)
Let $\fp\lhd A$. The Wedderburn structure theorem says that 
$D\otimes_{F}F_\fp\cong M_{\kappa_\fp}(D_\fp')$, where $D_\fp'$ is a central division algebra 
of dimension $d_\fp^2=(d/\kappa_\fp)^2$. The integer $d_\fp$ is called 
the \textit{local index} of $D$ at $\fp$; cf. \cite[p. 272]{Reiner}. 
 By \cite[(32.15)]{Reiner}, $L$ splits $D$ if and only if for each prime $\fp\lhd A$ 
and for all primes $\fP$ of $L$ lying above $\fp$, $d_\fp$ divides $[L_\fP:K_\fp]$. If $D$ is a division algebra 
and $[L:F]=d$, then $L$ splits $D$ if and only if $L$ embeds into $D$; moreover, every maximal subfield $L$ of $D$ contains $F$ 
and $[L:F]=d$; see \cite[(7.15)]{Reiner}.   
%\begin{rem} %We see that $b\in O_D$ acts on $\Lie(\gm_{a, L}^d)$ via its image in $M_d(L)$. Hence 
%(3) The characteristic polynomial of $\partial_{\phi, L}(b)$%, $b\in O_D$,  
%is the polynomial that one obtains by applying $\gamma$ to the coefficients of 
%the \textit{reduced characteristic polynomial} of $b$; cf. \cite[p. 113]{Reiner}. 
\end{rem}

%\begin{rem} From Remark \ref{rem3}, we see that $b\in O_D$ acts on $\Lie(\gm_{a, L}^d)$ 
%via its image in $M_d(L)$. Hence the characteristic polynomial of $\partial_{\phi, L}(b)$, $b\in O_D$,  
%is equal to the polynomial that one obtains by applying $\gamma$ to the coefficients of 
%the reduced characteristic polynomial of $b$. 
%One can show that condition (ii) is equivalent to the assumption that the characteristic 
%polynomial of $\partial_{\phi, L}(b)$ is the non-reduced characteristic polynomial of $b$ 
%(i.e., the characteristic polynomial 
%of the linear transformation induced by the left multiplication of $b$ on $O_D\otimes_A L$). Even stronger, 
%the action of $O_D$ on the tangent space $\Lie(\gm_{a,L}^d)$ gives an isomorphism $O_D\otimes_A L\xrightarrow{\sim}M_d(L)$; 
%this isomorphism should be equivariant with respect to the action of $b$, where on the left hand side it acts by multiplication. 
%\end{rem}

 %We will implicitly assume from now on that $d\geq 2$. 

\begin{defn}\label{defn2.6Hom}
 Let $\phi, \psi$ be Drinfeld-Stuhler $O_D$-modules over $L$. 
A \textit{morphism} $u:\phi\to \psi$ over $L$ is 
$u\in M_d(L[\tau])$ such that $u\phi_b=\psi_b u$ for all $b\in O_D$. 
%, i.e., an $\F_q$-linear morphism $u: \gm_{a, L}^d \to \gm_{a, L}^d$ which commutes with the action of $O_D$. More explicitly, 
%$u\in M_d(L[\tau])$ is such that $u\phi_b=\psi_b u$ for all $b\in O_D$. 
We say that $u$ is an \textit{isomorphism} if $u$ is invertible in the ring $M_d(L[\tau])$. 
We say that $u$ is an \textit{isogeny} if $\ker(u)$ is a finite group scheme over $L$; 
an isogeny $u$ is \textit{separable} if $\ker(u)$ is \'etale. Note that $\phi_b$, $b\in O_D\cap D^\times$, 
defines an isogeny $\phi\to \phi$. 
The set of morphisms $\phi\to \psi$ over $L$ is an $A$-module $\Hom_L(\phi, \psi)$, where 
$A$ acts by $a\circ u:=u\phi_a$. (Using the fact that $a\in A$ is in the center of $O_D$, it is 
easy to check that $u\phi_a\in \Hom_L(\phi, \psi)$.) 
We denote $\End_L(\phi)=\Hom_L(\phi, \phi)$; this is a subring of $M_d(L[\tau])$. 
For an arbitrary field extension $\cL$ of $L$ we can consider $\phi, \psi$ as Drinfeld-Stuhler $O_D$-modules over $\cL$, 
so we have the corresponding module $\Hom_\cL(\phi, \psi)$ of morphisms over $\cL$. We will denote 
$\Hom(\phi, \psi)=\Hom_{L^\alg}(\phi, \psi)$ and $\End(\phi)=\End_{L^\alg}(\phi)$.  

	A Drinfeld-Stuhler $O_D$-module $\phi$ over $L$ \textit{can be defined} over a subfield $K$ of $L$ 
	(equiv. $K$ is a \textit{field of definition for $\phi$}) 
	if there is a Drinfeld-Stuhler $O_D$-module $\psi$ over $K$ which is isomorphic to $\phi$ over $L$. 
	\end{defn}

\begin{lem}\label{lemSepIsog}
	Let $\phi$ be a Drinfeld-Stuhler $O_D$-module over $L$ and $b\in O_D\cap D^\times$. Then $\phi_b$ is separable 
	if and only if $\chr_A(L)$ does not divide $\Nr(b)$.  
\end{lem}
\begin{proof} %We can assume that $L$ is algebraically closed. Then the claim 
	This follows from Proposition 3.10 in \cite{LRS} (see also Corollary 5.11 in \cite{HartlIsog}). 
\end{proof}

\begin{lem}\label{lem2.10}
	For a non-zero ideal $\fn\lhd A$ and a Drinfeld-Stuhler $O_D$-module $\phi$ over $L$ we define 
	$$
	\phi[\fn]=\bigcap_{a\in \fn}\phi[a],
	$$
	where the intersection is the scheme-theoretic intersection of subgroup schemes of $\gm_{a, L}^d$. 
	Then $\phi[\fn]$ is invariant under  $\phi(O_D)$. Moreover, if $\chr_A(L)$ does not divide $\fn$, 
	then $$\phi[\fn](L^\sep)\cong O_D/O_D\fn$$ as left $O_D$-modules.  
\end{lem}
\begin{proof}
	Since $a\in A$ is in the center of $O_D$, it is clear that each $\phi[a]$, and thus also $\phi[\fn]$, is $\phi(O_D)$-invariant. 
	The second claim essentially follows from Theorem 6.4 in \cite{HartlIsog}. More precisely, in the terminology 
	of Section \ref{sMotSht}, let $M(\phi)$ be the $O_D$-motive associated to $\phi$. By \cite[Thm. 6.4]{HartlIsog}, $\phi[\fn]$ 
	is dual to $M(\phi)/\fn M(\phi)$. This isomorphism is compatible with the action of $O_D$. 
	On the other hand, $M(\phi)$ is a locally free left $O_D^\opp\otimes_{\F_q} L$-module of rank $1$. Hence
	$M(\phi)/\fn M(\phi)\cong O_D^\opp/\fn O_D^\opp$ as left $O_D^\opp$-modules. 
\end{proof}

\begin{rem} In general, $\phi[b]$ is not necessarily $O_D$-invariant for $b\in O_D\cap D^\times$, 
	so condition (i) 
	in Definition \ref{defDSM} cannot be stated in the stronger form 
	of isomorphism of left $O_D$-modules $\phi[b]\cong O_D/O_D\cdot b$; see Remark \ref{remNotO_Disom}. 
\end{rem}

\begin{lem}\label{lemECom} Let $\phi$ and $\psi$ be Drinfeld-Stuhler $O_D$-modules over $L$. Assume $L$ has generic 
	$A$-characteristic. Then:
	\begin{enumerate}
		\item The map $\partial: \Hom_L(\phi, \psi)\to M_d(L)$ is injective.
		\item $\End_L(\phi)$ is a commutative ring. 
	\end{enumerate}
\end{lem}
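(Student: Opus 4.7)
For part (1), I will argue by contradiction: suppose $u \in \Hom_L(\phi, \psi)$ is non-zero but $\partial(u) = 0$, and write $u = \sum_{i \geq s} B_i \tau^i$ in $M_d(L[\tau])$ with $B_s \neq 0$. Since $B_0 = \partial(u) = 0$, necessarily $s \geq 1$. For any $a \in A$, condition (ii) of Definition \ref{defDSM} gives $\phi_a = \gamma(a) I + (\text{higher-order terms in }\tau)$, and similarly for $\psi_a$. Comparing the $\tau^s$-coefficient of $u\phi_a = \psi_a u$, and using the commutation rule $\tau^s (cI) = c^{q^s} I \tau^s$ for $c \in L$, one obtains $\gamma(a)^{q^s} B_s = \gamma(a) B_s$. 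Since $B_s$ is a non-zero matrix over the field $L$, this forces $\gamma(a)^{q^s} = \gamma(a)$ for every $a \in A$, i.e.\ $\gamma(A) \subseteq \F_{q^s}$. But $\gamma$ is injective (generic $A$-characteristic) and $A$ is infinite, a contradiction.

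For part (2), I would use (1) to reduce to showing that the image $\partial(\End_L(\phi)) \subseteq M_d(L)$ is commutative. Given $u \in \End_L(\phi)$, applying $\partial$ to $u \phi_b = \phi_b u$ yields $\partial(u)\, \partial_{\phi, L}(b) = \partial_{\phi, L}(b)\, \partial(u)$ for every $b \in O_D$. By Lemma \ref{lemFD}, $\partial_{\phi, L} \colon O_D \otimes_A L \xrightarrow{\sim} M_d(L)$ is an isomorphism, so the $L$-span of $\partial_{\phi, L}(O_D)$ is all of $M_d(L)$. Hence $\partial(u)$ commutes with every matrix in $M_d(L)$ and therefore lies in the center $L \cdot I$. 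Combined with the injectivity from (1), this gives commutativity of $\End_L(\phi)$.

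\textbf{Main obstacle.} The argument presents no serious obstacle; it rests on two inputs already in place: the normalization of $\partial_\phi$ on $A$ from Definition \ref{defDSM}(ii) together with the injectivity of $\gamma$, and the surjectivity of $\partial_{\phi, L}$ from Lemma \ref{lemFD}. The only technical care I expect to need is the bookkeeping of twisted-polynomial multiplication in (1), and in particular the observation that one should inspect the \emph{lowest}-degree $\tau$-coefficient (rather than the highest); that is what couples $\gamma(a)^{q^s}$ to $\gamma(a)$ and closes the argument via the infiniteness of $\gamma(A)$.
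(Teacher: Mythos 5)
Your proposal is correct and follows essentially the same route as the paper: for (1), isolate the lowest-degree $\tau$-coefficient of $u\phi_a = \psi_a u$ to get $\gamma(a)^{q^s} = \gamma(a)$ for all $a\in A$, contradicting the infiniteness of $\gamma(A)$ in generic characteristic; for (2), use (1) plus the surjectivity of $\partial_{\phi,L}$ from Lemma \ref{lemFD} to place $\partial(\End_L(\phi))$ in the center of $M_d(L)$. No gaps.
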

\begin{proof} 
	Suppose $u\in \Hom_L(\phi, \psi)$ is non-zero but $\partial(u)=0$. Then $u=B_m\tau^m+B_{m+1}\tau^{m+1}+\cdots$, 
	where $m\geq 1$ is the smallest index such that $B_m\neq 0$. For $a\in A$, the equality $u\phi_a=\psi_a u$ 
	leads to $B_m\gamma(a)^{q^m}=\gamma(a)B_m$. Since $B_m\in M_d(L)$ has at least one non-zero entry, we must have 
	$\gamma(a)^{q^m}=\gamma(a)$. Since $a$ was arbitrary, this implies $\gamma(A)\subseteq \F_{q^m}$. On the 
	other hand, since $L$ has generic $A$-characteristic, $\gamma(A)$ is infinite, which leads to a contradiction.  
	
	By the first claim, $\partial$ maps $\End_L(\phi)$ isomorphically to its image in $M_d(L)$. On the other hand, 
	$\partial(\End_L(\phi))$ is in the centralizer of $\partial_\phi(O_D)$. By Lemma \ref{lemFD}, $\partial_\phi(O_D)$ 
	contains a basis of $M_d(L)$, so $\partial(\End_L(\phi))$ is in the center of $M_d(L)$, which consists of 
	scalar matrices. Hence $\partial$ identifies $\End_L(\phi)$ with an $A$-subalgebra of $L$. %, so $\End_L(\phi)$ is commutative. 
\end{proof}

\begin{lem}\label{lem1.7} 
	Let $\phi$ and $\psi$ be Drinfeld-Stuhler $O_D$-modules over $L$. 
If $u\in \Hom_L(\phi, \psi)$ is non-zero, then $u$ is an isogeny. 
\end{lem}
\begin{proof}
%The ring $O_D\otimes_A L\cong M_d(L)$ acts on the tangent space $\Lie(\gm_{a,L}^d)$ via $\partial_{\phi, L}$ and $\partial_{\psi, L}$. 
We will prove the lemma assuming $\chr_A(L)\nmid \fr(D)$. At the end of $\S$\ref{sME} we give a different 
proof, which avoids this assumption. 

Without loss of generality, we can assume that $L$ is algebraically closed. 
Suppose $u\in \Hom(\phi, \psi)$ is non-zero and has infinite kernel. Since $\ker(u)\subset \gm_{a, L}^d$ 
is an algebraic subgroup with infinitely many geometric points, the connected component $\ker(u)^0$ of the 
identity has positive dimension. 
We can decompose $u=u_0\tau^s$ for some $s\geq 0$, so that 
$\partial(u_0)\neq 0$. Note that $\partial_{u_0}$ is not invertible since it acts as $0$ on the tangent space of $\ker(u)^0$. Thus, 
$0\subsetneq \ker(\partial_{u_0})\subsetneq L^d$. 
Denote by $\partial_{\phi, L}^{q^s}$ the composition of $\partial_{\phi, L}$ and $\tau^s: M_d(L)\to M_d(L)$, 
which raises the entries of a matrix to $q^s$th powers. 
By Lemma \ref{lemFD}, since $L$ is algebraically closed, we have $\partial_{\phi, L}^{q^s}(O_D\otimes L)=M_d(L)$. 
On the other hand, $\partial_{u_0}\partial_{\phi, L}^{q^s}(b)=\partial_{\psi, L}(b)\partial_{u_0}$ for all $b\in O_D$, which comes from 
$u\phi_b=\psi_bu$.  
This implies that the subspace $\ker(\partial_{u_0})$ of $L^d$ is invariant under $M_d(L)$, 
which leads to a contradiction.  
\end{proof}

\begin{lem}\label{lemComEnd} Let $\phi$ and $\psi$ be Drinfeld-Stuhler $O_D$-modules over $L$.
	If $u: \phi\to \psi$ is an isogeny, then there is an element $0\neq a\in A$ and an isogeny $w:\psi\to \phi$ such that 
	$wu=\phi_a$ and $uw=\psi_a$. 
\end{lem}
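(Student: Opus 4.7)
The plan is to factor some $\phi_a$ through $u$: produce $w \in M_d(L[\tau])$ and $0 \neq a \in A$ with $\phi_a = wu$, and then derive $uw = \psi_a$ and the rest. This reduces at once to finding $a \in A \setminus \{0\}$ with $\ker(u) \subseteq \ker(\phi_a) = \phi[a]$ as subgroup schemes of $\gm_{a,L}^d$, because in that case $\phi_a$ factors through the quotient $\gm_{a,L}^d/\ker(u)$, which $u$ identifies with $\gm_{a,L}^d$ (as $u$ is surjective with finite kernel).

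To find $a$, I would exploit that $H := \ker(u)$ is $\phi(O_D)$-stable: the intertwining $u\phi_b = \psi_b u$ forces $\phi_b(H) \subseteq H$ for every $b \in O_D$. Therefore $J := \{b \in O_D : \phi_b|_H = 0\}$ is a \emph{two-sided} ideal of $O_D$. Once one knows $J \neq 0$, the reduced norm recovers an element of $A$: for any $0 \neq j \in J$, the reduced characteristic polynomial yields $\pm\nr(j) \in O_D\cdot j \subseteq J$, with $\nr(j) \in A \setminus \{0\}$ because $j$ is a unit of the division algebra $D$. Thus it suffices to verify $J \neq 0$. In generic $A$-characteristic, expanding $u = \sum u_i \tau^i$ and matching coefficients in $u\phi_b = \psi_b u$ for $b \in A$ via $\tau c = c^q \tau$ yields $u_i(\gamma(b)^{q^i} - \gamma(b)) = 0$; since $\gamma(A)$ is infinite, this forces $\partial u \neq 0$, hence $u$ is separable and $H$ is étale, so $H(L^{\alg})$ is a finite $A$-module via $\phi$, hence annihilated by some $a_1 \in A \setminus \{0\}$, giving $a_1 \in J$. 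In positive $A$-characteristic, I would work over $L^{\alg}$ and decompose $H = H^0 \oplus H^{\et}$; the étale part is handled as above, while for the infinitesimal part $H^0 \subseteq \ker(\tau^N)$ for some $N$, the $\phi(O_D)$-action factors through a bounded quotient of $O_D$, producing a non-zero element of $J$.

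With $a$ in hand and $\phi_a = wu$ obtained, I would verify $w \in \Hom_L(\psi, \phi)$ by the computation, for $b \in O_D$,
\[
(w\psi_b - \phi_b w)\,u = w\psi_b u - \phi_b wu = wu\phi_b - \phi_b\phi_a = \phi_a\phi_b - \phi_b\phi_a = 0
\]
(using that $a$ is central in $O_D$), and then cancelling $u$ on the right: as $u$ is surjective as a morphism of $\gm_{a,L}^d$, it is right-cancellable in $M_d(L[\tau])$. The same right-cancellation applied to $(uw - \psi_a)u = u(wu) - \psi_a u = u\phi_a - \psi_a u = 0$ yields $uw = \psi_a$. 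Finally, $w$ is an isogeny because $\ker(w)$ infinite would force $\ker(wu) = u^{-1}(\ker w)$ infinite, contradicting that $\phi_a$ has finite kernel.

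The principal obstacle is the production of $a$, and within that, the treatment of the infinitesimal part of $\ker(u)$ when $\chr_A(L) \neq 0$; the two-sided ideal formulation and the reduced norm trick are designed precisely to reduce this to merely showing that the $\phi$-action of $O_D$ on the finite group scheme $H$ cannot be injective.
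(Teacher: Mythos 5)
The paper itself disposes of this lemma with a one-line citation to \cite[Cor.~5.15]{HartlIsog}, so your self-contained argument is a genuinely different route. Its architecture --- find $0\neq a\in A$ with $\ker(u)\subseteq\phi[a]$, factor $\phi_a=wu$ through the quotient of $\gm_{a,L}^d$ by $\ker(u)$, then verify $w\in\Hom_L(\psi,\phi)$ and $uw=\psi_a$ by right-cancelling the faithfully flat $u$ --- is sound, and is essentially the classical Drinfeld-module argument transported to the matrix setting. The reduction to annihilating $\ker(u)$ by an element of $A$, and the final verifications, all check out.

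Two steps, however, need repair. First, in generic $A$-characteristic you infer from $\partial(u)\neq 0$ that $u$ is separable and $H=\ker(u)$ is \'etale. For $d\geq 2$ this implication is false: $\ker(u)$ is \'etale if and only if the matrix $\partial(u)\in M_d(L)$ is \emph{invertible}, not merely nonzero (e.g.\ $\diag(\tau,1,\dots,1)$ has nonzero $\partial$ but non-reduced kernel). The gap is repairable by the Schur-type argument already used in the proof of Lemma \ref{lem1.7}: by Lemma \ref{lemFD}, $\partial(u)$ intertwines the two $O_D\otimes_A L\cong M_d(L)$-module structures on $L^d$ given by $\partial_{\phi,L}$ and $\partial_{\psi,L}$, so $\ker(\partial(u))$ is an $M_d(L)$-submodule of the simple module $L^d$ and hence zero once $\partial(u)\neq 0$ (which is Lemma \ref{lemECom}(1)). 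Second, your treatment of the connected part $H^0$ in positive characteristic (``the action factors through a bounded quotient of $O_D$'') is an assertion, not an argument; note in particular that this is exactly the case your two-sided-ideal/reduced-norm device was meant to handle, yet no nonzero element of $J$ is actually exhibited. What works is direct: pick $0\neq a\in\chr_A(L)$; then $\partial(\phi_a)=\gamma(a)I=0$, so $\phi_a\in M_d(L[\tau])\tau$ and hence $\phi_{a^N}\in M_d(L[\tau])\tau^N$ kills $\ker(\tau^N)\supseteq H^0$. Multiplying by the annihilator of the \'etale part gives a nonzero element of $A$ killing all of $H$ (which also makes the reduced-norm detour unnecessary, since in both branches you land in $A\cap J$ directly). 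With these two repairs the proof is complete.
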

\begin{proof} This follows from Corollary 5.15 in \cite{HartlIsog}. We remark that 
	an isogeny $u: \phi\to \psi$ between abelian Anderson $A$-modules 
	is defined in \cite{HartlIsog} with an extra assumption that $u$, as an endomorphism of $\gm_{a,L}^d$, is  surjective. 
	On the other hand, the surjectivity of $u$ follows from the finiteness of $\ker(u)$ (see \cite[Prop. 5.2]{HartlIsog}), so Hartl's definition is 
equivalent to Definition \ref{defn2.6Hom}. 
\end{proof}

Lemmas \ref{lemSepIsog} and \ref{lemComEnd} imply that any isogeny $u:\phi\to \psi$ between 
Drinfeld-Stuhler $O_D$-modules over a field $L$ of generic $A$-characteristic is separable. 
In fact, this is true more generally for isogenies between abelian Anderson $A$-modules over $L$; see \cite[Cor. 5.17]{HartlIsog}. 

Lemmas \ref{lem1.7} and \ref{lemComEnd} imply that $\End_L(\phi)\otimes_A F$ is a division algebra over $F$. In 
	particular, if $L$ has generic $A$-characteristic, then $\End_L(\phi)\otimes_A F$ is a field, since 
	it is commutative by Lemma \ref{lemECom} (see Theorem \ref{thmEnd} for a more precise statement).

\subsection{Examples}\label{ssExamples}
As a consequence of the Grunwald-Wang theorem, every central simple 
$F$-algebra is cyclic; see \cite[(32.20)]{Reiner}. This means that there is a 
Galois extension $K/F$ with $\Gal(K/F)\cong \Z/d\Z$, a generator $\sigma$ of $\Gal(K/F)$, and 
$f\in F^\times$ such that  
\begin{equation}\label{eq-CAP}
D\cong (K/F, \sigma, f)=\bigoplus_{i=0}^{d-1} K z^i, \qquad z^d=f, \qquad z\cdot y=\sigma(y) z \quad \text{for } y\in K,
\end{equation}
where we identify $z^0$ with the identity element of $D$. Moreover, one can choose $f$ to be in $A$; cf. \cite[(30.4)]{Reiner}. 

Assume $K/F$ is imaginary and let $O_K$ be the integral closure of $A$ in $K$. Consider the $A$-order 
\begin{equation}\label{eqMax?}
O_D=\bigoplus_{i=0}^{d-1} O_K z^i  
\end{equation}
in $D$. 
This order is not necessarily maximal. It is not hard to compute that its discriminant is equal 
to $f^{d(d-1)}\mathrm{disc}(K/F)^d$; see \cite[Cor. 7]{BG}. 
For an $A$-order in $D$ to be maximal, it is necessary and 
sufficient for its discriminant to be equal to the discriminant of a maximal order. The discriminant of a 
maximal order in $D$ can be computed from the invariants of $D$; see \cite[(32.1)]{Reiner} and \cite[Prop. 25]{BG}.  
For $\fp\in \Ram(D)$, let the reduced fraction $s_\fp/r_\fp\in \Q/\Z$ be the invariant of $D$ at $\fp$. 
Set $r=\mathrm{lcm}(r_\fp\mid \fp\in  \Ram(D))$. Then a maximal order in $D$ has discriminant 
$\left(\prod_{\fp\in \Ram(D)} \fp^{r-\frac{r}{r_\fp}}\right)^r$. For example, if $d$ 
is prime, then the discriminant of a maximal order is equal to $\fr(D)^{d(d-1)}$. 
Comparing the discriminant of $O_D$ with the discriminant of a maximal order gives an explicit criterion 
for the order $O_D$ to be maximal; see \cite[Cor. 26]{BG}.  
%\end{notn}

\begin{example}\label{example9}
Assume the order $O_D$ in \eqref{eqMax?} is maximal. 
%Let $\F_{q^s}$ be the algebraic closure of $\F_q$ in $K$. 
Let $\Phi: O_K\to L[\tau]$ be a Drinfeld $O_K$-module of rank $1$ defined over some field $L$. 
%Note that $L$ is not an arbitrary $A$-field (e.g., if the kernel of the structure morphism $\gamma:A\to L$ 
%is $0$, then $L$ must contain the Hilbert class field of $K$, cf. \cite{HayesCFT}); 
%in any case, $L$ splits $D$ since it contains $K$. We also 
Observe that the restriction of $\Phi$ to $A$ defines a Drinfeld $A$-module of rank $d$ over $L$. 
Let 
$$
\phi: O_D\To M_d(L[\tau])
$$
be defined as follows: 
\begin{align*}
\phi_\alpha &=\diag(\Phi_\alpha, \Phi_{\sigma\alpha}, \dots, \Phi_{\sigma^{d-1}\alpha}), \quad \alpha\in O_K, 
\\
\phi_z &=\begin{pmatrix} 
0 & 1 & 0 & \cdots & 0 \\ 
0 & 0 & 1 & \cdots & 0 \\ 
 &  &  & \ddots & \\
 0 & 0 & 0 & \cdots & 1 \\
 \Phi_f & 0 & 0 & \cdots & 0
\end{pmatrix}. 
\end{align*}
Using the fact that $\Phi_\alpha\Phi_f=\Phi_f\Phi_\alpha$, it is easy to check that $\phi_z\phi_\alpha=\phi_{\sigma\alpha}\phi_z$ and $\phi_z^d=\phi_f$. Thus, $\phi$ 
is an embedding. Moreover, for $a\in A$, 
we have $\phi_a=\diag(\Phi_a, \dots, \Phi_a)$, which maps under $\partial$ to $\diag(\gamma(a), \dots, \gamma(a))$ 
by the definition of Drinfeld modules. Finally, 
$$
\#\phi[z]=\#\ker \Phi_f = \# (A/f A)^d= \# (A/f^dA)=\# (A/\Nr(z)A), 
$$
and 
$$
\#\phi[\alpha]=\#(O_K/O_K\alpha)^d= \# (A/\Nr(\alpha)A).  
$$
Thus, $\phi$ is a Drinfeld-Stuhler $O_D$-module. 
\end{example}

\begin{example}\label{example12} 
As a more explicit version of Example \ref{example9}, 
let $A=\F_q[T]$ and $F=\F_q(T)$. Let $\F_{q^d}$ denote the degree $d$ extension of $\F_q$. 
Let $K=\F_{q^d}(T)$, which is a cyclic imaginary 
extension as $\infty$ is inert in $K$. 
In this case, $O_K=\F_{q^d}[T]$ and the Galois group $\Gal(K/F)\cong \Gal(\F_{q^d}/\F_q)$ has a canonical generator $\sigma$ given by the 
Frobenius automorphism (i.e., $\sigma$ induces the $q$th power morphism on $\F_{q^d}$). 
Let $\fr\in A$ be a monic square-free polynomial with prime decomposition $\fr=\fp_1\cdots \fp_m$. 
Assume the degree of each prime $\fp_i$ is coprime to $d$. Let $D$ be the cyclic algebra $D=(K/F, \sigma, \fr)$. 
Then, by \cite[Thm. 4.12.4]{Goss}, for any prime $\fp\lhd A$ one has 
\begin{equation}\label{eqGossRosen}
\inv_\fp(D)=\frac{\ord_\fp(\fr)\deg(\fp)}{d} \in \Q/\Z. 
\end{equation}
Since the sum of the invariants of $D$ over all places of $F$ is $0$, if we assume that $\sum_{i=1}^m\deg(\fp_i)$ is divisible by $d$, then 
$D$ will be split at $\infty$ and will ramify only at the primes of $A$ dividing $\fr$. 

The order $O_D=\bigoplus_{i=0}^{d-1}O_K z^i$ is maximal in $D$, since its discriminant is equal to $\fr^{d(d-1)}$. 
Let $L$ be an $O_K$-field and $\gamma: A\to O_K\to L$ be the composition homomorphism. 
Let $\Phi: O_K \to L[\tau]$ 
be defined by $\Phi_T=\gamma(T)+\tau^d$; this is a rank-$1$ Drinfeld $O_K$-module and 
a rank-$d$ Drinfeld $A$-module. 
Then  
$$
\phi: O_D\To M_d(L[\tau]) 
$$
given by 
\begin{align*}
\phi_T &=\diag(\Phi_T, \dots, \Phi_T), \\
\phi_h &=\diag(h, h^q, \dots, h^{q^{d-1}}), \quad h\in \F_{q^d},\\
\phi_z &=\begin{pmatrix} 
0 & 1 & 0 & \cdots & 0 \\ 
0 & 0 & 1 & \cdots & 0 \\ 
 &  &  & \ddots & \\
 0 & 0 & 0 & \cdots & 1 \\
 \Phi_\fr & 0 & 0 & \cdots & 0
\end{pmatrix}, 
\end{align*}
is a Drinfeld-Stuhler module. 
\end{example}

\begin{rem}\label{remNotO_Disom}
It is easy to see from the previous example that for general $b\in O_D$ the kernel $\phi[b]$ is not necessarily $O_D$-invariant. 
%, hence condition (i) in the definition of Drinfeld-Stuhler modules cannot be stated in the stronger form 
%of isomorphism of left $O_D$-modules: $\phi[b]\cong O_D/O_D\cdot b$. 
Indeed, take $d=2$ and $b=h+z$ with $h\in \F_{q^2}\setminus \F_q$. A non-zero element  
$\begin{pmatrix} \alpha \\ \beta\end{pmatrix}\in \gm_{a, K}^2(\overline{K})$ is in $\phi[b]$ only if $h\alpha+\beta=0$. 
On the other hand, $\phi_h \begin{pmatrix} \alpha \\ \beta\end{pmatrix} = \begin{pmatrix} h\alpha \\ h^q\beta\end{pmatrix}$, so 
$\phi_h \begin{pmatrix} \alpha \\ \beta\end{pmatrix}\in \phi[b]$ only if $h^2\alpha+h^q\beta=0$. This implies 
$h^2\alpha=h^{q+1}\alpha$. Since $h^{q-1}\neq 1$, we must have $\alpha=0$, but then $\beta=0$.  
\end{rem}

\begin{example}\label{exampleMdA} Let $D=M_d(F)$ and $O_D=M_d(A)$. 
	Let $\Phi: A\to L[\tau]$ be a Drinfeld $A$-module over $L$ of rank $d$. Define 
	\begin{align*}
	\phi: O_D &\To M_d(L[\tau]) \\ 
	(a_{ij}) &\longmapsto \left(\Phi_{a_{ij}}\right).
	\end{align*}
	It is easy to check that $\phi$ is an injective homomorphism using the fact that $\Phi: A\to L[\tau]$ is an injective homomorphism.  
	That (ii) is satisfied follows from 
	the definition of Drinfeld modules. The non-reduced norm on $O_D$ in this case is simply the $d$th power of the determinant map, up to a sign. 
	Condition (i) is easy to check for diagonal and unipotent matrices in $M_d(A)$. Since these matrices generate the semigroup 
	of matrices in $M_d(A)$ with non-zero determinants, it follows that condition (i) holds. Hence $\phi$ 
	is a Drinfeld-Stuhler module. 
\end{example}

\subsection{Morita equivalence for Drinfeld-Stuhler modules}\label{sME}

The main result of this subsection is the fact that any Drinfeld-Stuhler $M_d(A)$-module arises from some Drinfeld module of rank $d$
via the construction of Example \ref{exampleMdA}. This fact for $\sD$-elliptic sheaves is mentioned in \cite[p. 224]{LRS}. 

Let $R$ be an arbitrary unitary ring (not necessarily commutative).  
	We denote by $e_{ij}\in M_d(R)$ the matrix which has $1$ at the $(i,j)$-th entry, and $0$ everywhere else. We have the 
	relations 
	$$
	e_{ij}e_{ks}=
	\begin{cases}
	e_{is} & \text{if $j=k$},\\ 
	0 & \text{otherwise}. 
	\end{cases}
	$$ 

\begin{lem}
	Let $R$ be a unitary ring for which every left ideal is principal. Let $\phi: M_d(A)\to M_d(R)$ 
	be an injective homomorphism. Then, up to conjugation by an element of $\GL_d(R)$, 
	we have $\phi(e_{ij})=e_{ij}$ for $1\leq i,j\leq d$. 
\end{lem}
\begin{proof}
	Let $M$ be a free left $R$-module of rank $d$. Let $e_1, \dots, e_d \in \End_R(M)$ be 
	non-zero elements which satisfy the following conditions 
	$$
	e_i\cdot e_i=e_i, \quad e_i\cdot e_j=0 \text{ if }i\neq j, \quad e_1+\cdots e_d=\id. 
	$$
	Let $M_i=e_i(M)\subset M$ be the image of $e_i$. Then $M_i\subset M$ is a non-zero $R$-submodule. 
	We observe the following:
	\begin{itemize}
		\item $e_i$ acts as $\id$ on $M_i$. (If $m\in M_i$ then $m=e_i(m')$ so $e_i m =e_i^2(m')=e_i(m')=m$.) 
		\item $e_i$ acts as $0$ on $M_j$. (Same argument as above.) 
		\item $M_i\cap M_j=0$. (If $m\in M_i\cap M_j$ then $e_i m=m$ since $m\in M_i$; on the other hand $e_i m=0$, since $m\in M_j$.) 
		\item $M=M_1+\cdots + M_d$. (Since $m=\id m= \sum e_im$.)  
	\end{itemize}
	Hence $M$ is an internal direct sum of the submodules $M_i$. We see that $M_i$ is a projective left $R$-module, and 
	since every left $R$-ideal is principal, $M_i$ is free. Since $M_i\neq 0$, $\rank_R M_i\geq 1$. Comparing the ranks of $\sum M_i$ 
	and $M$, we see that $\rank_R M_i=1$. If we choose the generators of $M_1, \dots, M_d$ as an $R$-basis of $M$, 
	then we get an isomorphism $\End_R(M)\cong M_d(R)$ such that $e_i=e_{ii}$. 
	
	Now let $e_{ij}'=\phi(e_{ij})$. Since $e_{ii}'$ satisfy the conditions listed above, after a conjugation corresponding to 
	mapping a given basis to the basis of the previous paragraph, we can assume $e_{ii}'=e_{ii}$. 
	Next, $e_{ij}'e_{jj}'=e_{ij}'$ and $e_{ii}' e_{ij}'=e_{ij}'$ shows that $e_{ij}'$ has zero entries except possibly at $(i,j)$-th entry, 
	which we denote $a_{ij}$. Since $e_{ij}' e_{ji}'=e_{ii}'$, we see that $a_{ij}a_{ji}=1$. Hence all $a_{ij}\in R^\times$.   
	After conjugating $\phi(M_d(A))$ by $\diag(a_{11}, a_{12}, \dots, a_{1d})$, we get $a_{1i}=a_{i1}=1$ for all $i$. 
	On the other hand, $a_{ij}=a_{i1}a_{1j}$, so $e_{ij}'$ become $e_{ij}$. 
\end{proof}

\begin{thm}\label{thmME-DSM}
	The category of of Drinfeld-Stuhler $M_d(A)$-modules over $L$ is equivalent to the category  
	of Drinfeld $A$-modules of rank $d$ over $L$ . 
\end{thm}
\begin{proof}
	Suppose $\phi$ is a Drinfeld-Stuhler $M_d(A)$-module. By the previous lemma, we can assume that $\phi(e_{ij})=e_{ij}$. 
	(Note that every left ideal of $L[\tau]$ is principal; cf. \cite[Cor. 1.6.3]{Goss}.) 
	The map 
	$\Phi:A\to L[\tau] $
	which sends $a$ to the non-zero entry of $\phi(a\cdot e_{11})$   
	is a Drinfeld $A$-module of rank $d$, as easily follows from considering the kernel of $\phi(\diag (a, 1, 1, \dots, 1))$. 
	Next, $a e_{ij}=e_{i1} (ae_{11}) e_{1j}$, which implies that $\phi(ae_{ij})$ is the matrix $\Phi_a e_{ij}$. Hence 
	$\phi$ arises from a unique Drinfeld $A$-module $\Phi$ of rank $d$ by the construction of Example \ref{exampleMdA}. 
	
	Now suppose $u:\Phi\to \Phi'$ is a morphism of Drinfeld modules, i.e., $u\in L[\tau]$ is such that $u\Phi_a=\Phi_a'u$ 
	for all $a\in A$. Mapping $u$ to $U:=\diag(u, \dots, u)$, we obtain a morphism $U\phi_b=\phi'_bU$, $b\in M_d(A)$, 
	of the corresponding Drinfeld-Stuhler modules. By an argument similar to the argument of the previous paragraph it is 
	not hard to check that any morphism $\phi\to \phi'$ arises in this manner. This proves the theorem. 
\end{proof}

\begin{rem}
	Given a unitary ring $R$ and a left $R$-module $M$, the direct sum $M^{\oplus d}$ is a left $M_d(R)$-module with  
	$M_d(R)$ acting on elements of $M^{\oplus d}$ as column vectors with entries in $M$. The functor $M\mapsto M^{\oplus d}$ 
	from the category of left $R$-modules to the category of left $M_d(R)$-modules is an equivalence of categories, known 
	as \textit{Morita equivalence}; cf. \cite{Reiner}.  The inverse functor is $M'\mapsto e_{11}M'$. 
\end{rem}

The Morita equivalence can be modified so that certain problems concerning Drinfeld-Stuhler $O_D$-modules 
reduce to the case of Drinfeld modules, even when $D\not\cong M_d(F)$.  This idea is due to Taelman \cite{TaelmanPhD}, 
who used it in the context of $O_D$-motives to prove a fact equivalent to the existence of analytic uniformization 
of Drinfeld-Stuhler modules. To end this section, we sketch Taelman's construction in the setting of Drinfeld-Stuhler modules, 
and indicate one application.

First, recall the following fact. Let $F'/F$ be a finite extension. 
Then $D\otimes_F F'$ is a central simple algebra over $F'$ and for a place $w$ of $F'$ 
over a place $v$ of $F$ we have (cf. \cite[Lem. A.3.2]{LaumonCDV})
$$
\inv_w(D\otimes_F F')=[F'_w:F_v]\cdot \inv_v(D) \in \Q/\Z. 
$$
Now suppose $F'=\F_{q^n}F$ is obtained by extending the constants. 
In this case $F'_w/F_v$ is unramified of degree $n/\gcd(n, \deg(v))$. 
Hence, using the above formula for the invariants of $D\otimes_F F'$, we see that there is $n$, e.g., 
$n=d\prod_{\inv_v(D)\not\in \Z} \deg(v)$, such that the invariants of $D\otimes_F F'$ at all places of $F'$ are $0$, 
which is equivalent to $D\otimes_F F' \cong M_d(F')$. This fact is known as \textit{Tsen's theorem}.  

Now let $\phi$ be a Drinfeld-Stuhler $O_D$-module over $L$,  and let $n$ be such that $F'=\F_{q^n}F$ splits $D$. Let $A'$ be the integral closure of $A$ 
in $F'$. Assume $\F_{q^n}\subset L$. 
Denote $\sigma=\tau^n$, and consider the composition 
$$
\phi':O_D\overset{\phi}{\To} M_d(L[\tau])\xrightarrow{\tau\mapsto \sigma} M_d(L[\sigma]).   
$$
%where $L[\sigma]$ is the twisted polynomial ring with commutation rule $\sigma b=b^{q^n}\sigma$, $b\in L$. 
(The second map is a formal substitution $\tau\mapsto \sigma$; it is not a homomorphism.) 
Note that $\phi'$ 
is not a Drinfeld-Stuhler module according to our definition, but the definition can be easily generalized so that $\phi'$ is 
a Drinfeld-Stuhler module of ``rank $n$''. Denote $O_{D'}:=O_D\otimes_{\F_q}\F_{q^n}$, and extend $\phi'$ to an embedding 
$$
\widetilde{\phi}':O_{D'} \To M_d(L[\sigma])
$$
by mapping $1\otimes\alpha\mapsto \diag(\alpha, \dots, \alpha)$. It is easy to check that $O_{D'}$ 
is a maximal $A'$-order in $D':=D\otimes_F F'\cong M_d(F')$, for example, by calculating its discriminant. Finally, using the Morita 
equivalence, one associates to $\widetilde{\phi}'$ a Drinfeld $A'$-module $\Phi'$ over $L$. 
(One technical complication that should be pointed out is that $O_{D'}$ might not be conjugate to $M_d(A')$ in $M_d(F')$ if $A'$ is not a P.I.D.,  
but for the Morita equivalence to work one only needs an idempotent $e$ in $O_{D'}$ which commutes with $\sigma$; cf. \cite[p. 68]{TaelmanPhD}.)

As for the promised application of the above construction, we prove Lemma \ref{lem1.7}. 
\begin{proof}[Proof of Lemma \ref{lem1.7}] 
Let $\phi$ and $\psi$ be Drinfeld-Stuhler $O_D$-modules over $L$ and 
$u:\phi\to \psi$ be a non-zero morphism. We want to show that $u$ is an isogeny. 
Without loss of generality, we assume that $L$ is algebraically closed.  
Explicitly, $u$ is a matrix in $M_d(L[\tau])$. Substituting $\sigma$ for $\tau$ 
in the entries of $u$, we get a matrix $u(\sigma)\in M_d(L[\sigma])$. It is clear that $u(\sigma)$ gives a morphism 
$\widetilde{\phi}'\to \widetilde{\psi}'$, and hence also a non-zero morphism $w: \Phi'\to \Psi'$. But now $w$ is 
a non-zero polynomial in $L[\sigma]$, so $\ker(w)$ is obviously finite, i.e., $w$ is an isogeny. Finally, it is easy to check 
that this implies that $u(\sigma)$, and thus also $u$ itself, is an isogeny. 
\end{proof}

%----------------------------------------------------------------

\section{$O_D$-motives, $\sD$-elliptic sheaves and $O_D$-lattices }\label{sMotSht}

In this section, we introduce three categories closely related with the category of Drinfeld-Stuhler modules. These alternative 
points of view on Drinfeld-Stuhler modules will be important for the proofs of the  main results 
of this paper. None of the results of this section are original -- they are due to Anderson \cite{Anderson}, 
Laumon, Rapoport, Stuhler \cite{LRS}, and Taelman \cite{TaelmanPhD}. 
We keep the notation and assumptions of Section \ref{sBasics}. 
In particular,  
$L$ is an $A$-field. % such that $\chr_A(L)\nmid \fr(D)$. 
Let $O_D^\opp$ denote the opposite ring of $O_D$ (see \cite[p. 91]{Reiner}), i.e., $O_D^\opp$ is $O_D$ with the same addition 
but multiplication defined by $\alpha\ast \beta=\beta\cdot \alpha$, where $\beta\cdot \alpha$ is the multiplication in $O_D$. 

The first category is a variant of Anderson's motives. 

\begin{defn}
An \textit{$O_D$-motive} over $L$ is a left $O_D^\opp\otimes_{\F_q} L[\tau]$-module $M$ with the following properties 
(cf. \cite[p. 68]{TaelmanPhD}, \cite[p. 228]{LRS}):
\begin{itemize}
\item[(i)] $M$ is a locally free $O_D^\opp\otimes_{\F_q} L$-module of rank $1$.
\item[(ii)] $M$ is a free $L[\tau]$-module of rank $d$.
\item[(iii)] For all $a\in A$, 
$$
(a\otimes 1 -1\otimes \gamma(a)) \overline{M}\subset \tau \overline{M}, 
$$
where $\overline{M}:=M\otimes_LL^\alg$ is considered as a left $A\otimes_{\F_q} L^\alg[\tau]$-module. 
\end{itemize}
The morphisms between $O_D$-motives are the homomorphisms of $O_D^\opp\otimes_{\F_q} L[\tau]$-modules. 
We denote the corresponding category by $\DMot$. (An $O_D$-motive is a pure abelian Anderson $A$-motive, in the sense of \cite{vdHeiden} or \cite{BH}, 
of rank $d^2$, dimension $d$, and weight $1/d$; see \cite[$\S$9.2]{TaelmanPhD}.)
\end{defn}

Given a Drinfeld-Stuhler $O_D$-module $\phi$ over $L$, let 
$M(\phi)$ be the group $$\Hom_{\F_q}(\gm_{a, L}^d, \gm_{a, L})\cong L[\tau]^d$$ equipped with the unique $O_D^\opp\otimes_{\F_q} L[\tau]$-module 
structure such that 
$$
(\ell m)(e)=\ell(m(e)), \quad (\tau m)(e)=m(e)^q, \quad (b m)(e) = m(\phi(b)e),
$$
for all $e\in \gm_{a, L}^d$, $\ell\in L$, $b\in O_D$, and morphisms $m: \gm_{a, L}^d\to \gm_{a, L}$. 
It is easy to see that $M(\phi)$ is an $O_D$-motive. 
%We call $M(\phi)$ the \textit{motive of $\phi$}. 
%It is clear from the discussion in \cite[$\S$1]{Anderson} that 
%for two Drinfeld-Stuhler $O_D$-modules $\phi$ and $\psi$ over $L$ we have 
%$$ \Hom_L(\phi, \psi)\cong \Hom_{O_D^\opp\otimes_{\F_q} L[\tau]}(M(\psi), M(\phi)). $$

\begin{thm}\label{thmModMot}
The functor $\phi\mapsto M(\phi)$ gives an anti-equivalence of categories between the category of Drinfeld-Stuhler $O_D$-modules 
and $\DMot$. 
\end{thm}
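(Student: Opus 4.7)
The plan is to construct an explicit quasi-inverse functor and verify compatibility with morphisms. The work breaks into three steps: checking that $M(\phi)$ satisfies the three axioms of an $O_D$-motive; constructing an assignment $M\mapsto \phi_M$ reversing this; and confirming that both composites are naturally isomorphic to the identity.

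First I would verify that $M(\phi)$ is an $O_D$-motive. Axiom (ii) is immediate from the canonical identification $\Hom_{\F_q}(\gm_{a,L}^d,\gm_{a,L})\cong L[\tau]^d$. For axiom (i), the quotient $M(\phi)/\tau M(\phi)$ is naturally identified with $L^d\cong \Lie(\gm_{a,L}^d)$, and the $O_D^\opp\otimes_{\F_q}L$-action factors through $\partial_{\phi,L}$. By Lemma \ref{lemFD}, $\partial_{\phi,L}$ is an isomorphism $O_D\otimes_A L\xrightarrow{\sim} M_d(L)$, so $L^d$ becomes a free $O_D^\opp\otimes_{\F_q}L$-module of rank one; combined with the $\tau$-adic structure of $L[\tau]^d$, this upgrades to the required locally free rank one statement (cf.\ \cite{vdHeiden}). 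Axiom (iii), restricted to $a\in A\subset O_D$, is exactly the translation of condition (ii) of Definition \ref{defDSM}, since $\partial_\phi(a)-\gamma(a)\cdot I=0$ is equivalent to $(a\otimes 1-1\otimes \gamma(a))M(\phi)\subseteq \tau M(\phi)$.

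Next I would construct the inverse functor. Given an $O_D$-motive $M$, axiom (ii) lets me pick an $L[\tau]$-basis identifying $M\cong L[\tau]^d$. The commuting left $O_D^\opp$-action then gives a ring homomorphism
\[
\phi\colon O_D\to \End_{L[\tau]}(L[\tau]^d)^\opp\cong M_d(L[\tau]).
\]
Reducing axiom (iii) modulo $\tau$ yields $\partial_\phi(a)=\gamma(a)\cdot I$ for all $a\in A$, which is axiom (ii) of Definition \ref{defDSM}. The nontrivial input is axiom (i): for $0\neq b\in O_D$, one must show $\#\phi[b]=\#(O_D/O_Db)$. Under the Anderson equivalence, $\phi[b]$ corresponds to the cokernel $M/bM$, so this reduces to computing that $M/bM$ has length $\deg(\Nr(b))$ over $L$ — which follows from the pure Anderson $A$-motive theory (rank $d^2$, dimension $d$, weight $1/d$) developed in \cite{HartlIsog}, \cite{TaelmanPhD}, together with Lemma \ref{lemFD} to control the $O_D^\opp\otimes L$-structure. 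That $\phi$ is an embedding follows from faithfulness of the action on $M$, which is a rank-one $O_D^\opp\otimes L$-module after inverting $\tau$ in the appropriate sense.

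Finally I would check that the functors are quasi-inverses and that morphisms correspond. A morphism $u\colon\phi\to\psi$ induces, by precomposition $m\mapsto m\circ u$, a morphism $M(\psi)\to M(\phi)$ of $O_D^\opp\otimes_{\F_q}L[\tau]$-modules (the direction reverses, giving anti-equivalence). Conversely, any $L[\tau]$-linear map $L[\tau]^d\to L[\tau]^d$ is given by right-multiplication by a unique $u\in M_d(L[\tau])$, and $O_D^\opp$-linearity translates exactly to $u\phi_b=\psi_b u$ for all $b\in O_D$. Naturality of these assignments is formal, and the canonical identifications $\phi\mapsto \phi_{M(\phi)}$ and $M\mapsto M(\phi_M)$ follow once one fixes compatible bases. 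The main obstacle is the order computation in axiom (i); this is the point where one genuinely needs the theory of Anderson $A$-motives with non-commutative coefficient ring (the weight $1/d$ purity in \cite{TaelmanPhD} makes the length count match $\#(A/\Nr(b)A)$), rather than only the straightforward equivalence $\End_{\F_q}(\gm_{a,L}^d)\cong M_d(L[\tau])$.
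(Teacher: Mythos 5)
Your outline is essentially the paper's approach: the paper proves this theorem by a one-line appeal to ``a slight modification of Anderson's method'' (citing \cite[Thm.~2.3]{vdHeiden} and \cite[Thm.~3.5]{HartlIsog}), and your three steps --- verifying the motive axioms, inverting via a choice of $L[\tau]$-basis, and matching morphisms with the order count for condition (i) outsourced to the pure Anderson $A$-motive theory --- are exactly what those references carry out. One local slip worth fixing: $M(\phi)/\tau M(\phi)\cong L^d$ is the \emph{simple} module over $O_D^\opp\otimes_A L\cong M_d(L)$ (of $L$-dimension $d$, not $d^2$), so it is not free of rank one over $O_D^\opp\otimes_{\F_q}L$; the rank-one local freeness asserted in axiom (i) concerns $M(\phi)$ itself and requires the coherent-sheaf/$\tau$-adic argument of \cite{vdHeiden}, which you do ultimately invoke, so the architecture of the proof is unaffected.
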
 
\begin{proof}
This can be proven by a slight modification of Anderson's method; see \cite[Thm. 2.3]{vdHeiden} or \cite[Thm. 3.5]{HartlIsog}. 
\end{proof}

The second category arises from $\sD$-elliptic sheaves mentioned in the introduction.  

\begin{defn}\label{defnDES}
Fix a maximal $\cO_C$-order $\sD$ in $D$ such that $H^0(C-\infty, \sD)=O_D$. 
A \textit{$\sD$-elliptic sheaf over $L$} is a sequence $\E=(\cE_i, j_i, t_i)_{i\in \Z}$, where 
$\cE_i$ is a locally-free $\cO_{C\otimes_{\F_q} L}$-module of rank $d^2$ equipped with a right action of $\sD$ 
which extends the $\cO_C$-action, and 
\begin{align*}
j_i &: \cE_i\hookrightarrow \cE_{i+1} \\ 
t_i &: \twist{\cE_i}:=(\mathrm{Id}_C\otimes \Frob_q)^\ast\cE_i \hookrightarrow \cE_{i+1}
\end{align*} 
are injective $\sD$-linear homomorphisms. Moreover, for each $i\in \Z$ the following conditions hold: 
\begin{enumerate}
\item[(i)] The diagram
$$
\xymatrix{\cE_i \ar[r]^{j_i} & \cE_{i+1}\\ \twist{\cE_{i-1}}
\ar[r]^{\twist{j_{i-1}}}\ar[u]^-{t_{i-1}} &
\twist{\cE_i}\ar[u]_-{t_i}}
$$
commutes;
\item[(ii)] $\cE_{i+d\cdot\deg(\infty)}=\cE_i\otimes_{\cO_C}\cO_C(\infty)$, and the inclusion
$$
\cE_i\xrightarrow{j_i}\cE_{i+1}\xrightarrow{j_{i+1}}\cdots \to
\cE_{i+d\cdot\deg(\infty)}=\cE_i\otimes_{\cO_C}\cO_C(\infty)
$$
is induced by $\cO_C\hookrightarrow \cO_C(\infty)$;
\item[(iii)] $\dim_L H^0(C\otimes L,\coker j_i)=d$;
\item[(iv)] $\cE_i/t_{i-1}(\twist{\cE_{i-1}})=z_\ast\cV_i$, where $\cV_i$
is a $d$-dimensional $L$-vector space, and $z$ is the morphism induced by $\gamma$:
$$
z: \Spec(L)\to \Spec(A)\to C. 
$$
\end{enumerate}
A \textit{morphism} between two 
$\sD$-elliptic sheaves over $L$ 
$$
\psi=(\psi_i)_{i\in \Z}:\E=(\cE_i, j_i, t_i)_{i\in \Z}\to
\E'=(\cE_i', j_i', t_i')_{i\in \Z}
$$
is a sequence of sheaf morphisms $\psi_i:\cE_i\to \cE'_{i+n}$ for
some fixed $n\in \Z$ which are compatible with the action of $\sD$
and commute with the morphisms $j_i$ and $t_i$:
$$
\psi_{i+1}\circ j_i=j_{i+n}'\circ \psi_i \quad \text{and}\quad
\psi_i\circ t_{i-1}=t_{i+n-1}'\circ \twist{\psi_{i-1}}.
$$
\end{defn}

Note that the group $\Z$ acts freely on the objects of the category of $\sD$-elliptic sheaves by ``shifting the indices'':
$$
n\cdot(\cE_i, j_i,t_i)_{i\in \Z}=(\cE_i', j_i',t_i')_{i\in \Z}
$$
with $\cE_i'=\cE_{i+n}$, $j_i'=j_{i+n}$, $t_i'=t_{i+n}$. 
%, and there is an obvious morphism $\psi: (\cE_i, j_i,t_i)\to n\cdot (\cE_i,
%j_i,t_i)$ where $\psi_i:\cE_i\overset{\sim}{\To}\cE_{i-n}'$ for all $i\in \Z$.
Let $\DES/\Z$ be the quotient of the category of $\sD$-elliptic sheaves by this action of $\Z$. 

Let $\E=(\cE_i, j_i, t_i)_{i\in \Z}$ be a $\sD$-elliptic sheaf over $L$. Consider 
$$
M(\E):=H^0((C-\{\infty\})\otimes L, \cE_i). 
$$
This is independent of $i$ since $\mathrm{supp}(\cE_i/\cE_{i-1})\subset \{\infty\}\times \Spec(L)$. 
It is an $L[\tau]$-module, where the operation of $\tau$ is induced from $t_i: \twist{\cE}_i\to \cE_{i+1}$. 
In fact, $M(\E)$ is an $O_D$-motive; see \cite[(3.17)]{LRS}. 
\begin{thm}\label{thmShtMot}
The functor $\E\mapsto M(\E)$ gives an equivalence of $\DES/\Z$ with $\DMot$.  
\end{thm}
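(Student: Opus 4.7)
The plan is to show that $\E\mapsto M(\E)$ factors through $\DES/\Z$, lands in $\DMot$, and admits an explicit quasi-inverse obtained by ``gluing across $\infty$''. Everything on the complement $(C-\{\infty\})\otimes L$ is tautological; the content of the theorem is that the $\sD$-elliptic sheaf structure at $\infty$ is determined by, and can be reconstructed from, the pure weight-$1/d$ structure recorded in the motive.

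\textbf{Step 1 (well-definedness).} I would first check that $M(\E)\in\DMot$. Condition (iii) of a $\sD$-elliptic sheaf says that the $j_i$ are isomorphisms away from $\infty$, so each $\cE_i$ has the same restriction to $(C-\{\infty\})\otimes L=\Spec(A\otimes L)$; hence $M(\E)$ is independent of $i$ and depends only on the $\Z$-orbit of $\E$. Since $\cE_i|_{(C-\infty)\otimes L}$ is locally free of rank $d^2$ with a right $\sD$-action, Morita theory (together with maximality of $\sD$) shows it is a locally free right $\sD$-module of rank $1$, which gives property (i) of an $O_D$-motive. The maps $t_i$ induce the $\tau$-action on $M(\E)$; properties (ii) and (iii) of the motive translate into the dimension conditions (iii) and (iv) of a $\sD$-elliptic sheaf, exactly as in \cite[(3.17)]{LRS}.

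\textbf{Step 2 (quasi-inverse).} Given $M\in\DMot$, one produces a locally free sheaf $\widetilde{M}$ on $(C-\{\infty\})\otimes L$ with right $\sD$-action by descending the $O_D^\opp\otimes L$-structure. To extend across $\infty$, I would use that $\sD\otimes_{\cO_C}\widehat{\cO}_{C,\infty}$ is Morita-equivalent to $M_d(\widehat{\cO}_{C,\infty})$ (because $\infty\notin\Ram(D)$ and $\sD$ is maximal). Writing $\pi$ for a uniformizer at $\infty$, the completion $M\otimes_AF_\infty$ is an $F_\infty[\tau]$-module of rank $d$ which is pure of slope $1/d$ in the sense of \cite{BH,TaelmanPhD}; such a module admits a canonical periodic lattice chain $\cdots\subset\Lambda_{i-1}\subset\Lambda_i\subset\Lambda_{i+1}\subset\cdots$ with $\Lambda_{i+d\deg(\infty)}=\pi^{-1}\Lambda_i$, $\dim_L(\Lambda_{i+1}/\Lambda_i)=1$, and $\tau\Lambda_{i}\subset\Lambda_{i+1}$. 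Gluing $\widetilde{M}$ to $\Lambda_i$ along the punctured disc via faithfully-flat descent yields a coherent $\sD$-stable sheaf $\cE_i$ on $C\otimes L$, with $j_i,t_i$ built from the inclusions and from $\tau$. The $\Z$-orbit is independent of the choice of base point in the chain.

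\textbf{Step 3 (equivalence).} The two constructions are inverse up to canonical isomorphism: on $C-\{\infty\}$ this is immediate, and at $\infty$ the lattice chain read off from $M(\E)$ literally recovers the stalks of $\cE_i$ because the stalks of any $\sD$-elliptic sheaf at $\infty$ are exactly such a periodic $\tau$-stable chain. Functoriality is clear, and fully faithfulness follows because morphisms in either category are determined by their restriction to the generic fibre $\Spec(A\otimes L)$ together with their effect at $\infty$, both of which are preserved by $M(\cdot)$.

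\textbf{Main obstacle.} The delicate point is Step~2: producing the canonical $\sD$-stable lattice chain at $\infty$ from the purely algebraic motive, and verifying that the gluing actually yields a \emph{coherent} sheaf satisfying all four axioms. This is where the weight-$1/d$ hypothesis is indispensable, since it is what forces the chain of successive quotients to have length exactly $d\deg(\infty)$ and each quotient to be one-dimensional over $L$. All of this is essentially in \cite[\S3]{LRS} and \cite{TaelmanPhD}, so in practice I would carry out the argument by citing their construction and only checking that the $\sD$-equivariance is preserved throughout, since those references treat the $\GL_d$ case.
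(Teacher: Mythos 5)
Your proposal is correct and follows essentially the same route as the paper: both reduce the theorem to reconstructing the data of $\E$ at $\infty$ from the motive alone (up to the $\Z$-shift), deferring the substantive input to \cite[(3.13)--(3.17)]{LRS} and Taelman. The ``canonical periodic lattice chain'' you invoke in Step 2 is exactly what the paper extracts from the splitting $(M(\E),W_\infty)\cong\cO(n)^{\oplus d}$ of the associated vector bundle on the non-commutative projective line, where the coherent $\sD$-action forces all twists to be equal and the residual ambiguity in $n$ is precisely the $\Z$-action.
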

\begin{proof}
This is implicitly proven in \cite[(3.17)]{LRS} and explicitly in \cite[10.3.5]{TaelmanPhD}. We outline the main steps of the 
proof since part of this argument will be used later in the paper. 

First note that since $M(\E)$ does not depend on the choice 
of $\cE_i$, the map is indeed a functor from $\DES/\Z$ to $\DMot$. 
Next, let $W_\infty:=H^0(\Spec(O_\infty \hat{\otimes}L), \cE_0)$. From the definition 
of $\sD$-elliptic sheaf one deduces that $W_\infty$ has a natural structure of a free $L[\![\tau^{-1}]\!]$-module 
of rank $d$; see \cite[p. 231]{LRS}. 
In addition, $W_\infty$ is a right $\sD_\infty$-module so that we get an injective $\F_q$-algebra homomorphism 
$$ \varphi_\infty: \sD_\infty^\opp\to \End_{L[\![\tau^{-1}]\!]}(W_\infty), $$
and if we denote by 
$\pi_\infty$ a uniformizer of $O_\infty$ and $\tau_\infty=\tau^{\deg(\infty)}$, then 
$W_\infty$ has the property that $\tau_\infty^{-d}W_\infty=\pi_\infty W_\infty$. 

The pair $(M(\E), W_\infty)$ is a vector bundle of rank $d$ over the non-commutative projective line over $L$ 
in the sense of \cite[(3.13)]{LRS}. Hence, by \cite[(3.16)]{LRS}, 
$$
(M(\E), W_\infty)\cong \cO(n_1)\oplus\cdots \oplus \cO(n_d),
$$
where $\cO(n)=(L[\tau], \tau^n L[\![\tau^{-1}]\!])$. Since $(M(\E), W_\infty)$ is equipped with a 
coherent right $\sD$-action (cf. \cite[(3.14)]{LRS}), we have $n_1=\cdots=n_d$. Hence 
$(M(\E), W_\infty)\cong \cO(n)^{\oplus d}$ for some $n\in \Z$. If we define $W_\infty'=H^0(\Spec(O_\infty \hat{\otimes}L), \cE_i)$, 
then $(M(\E), W_\infty')$ is again a vector bundle of rank $d$ over the non-commutative projective line. 
Moreover $(M(\E), W_\infty')=(M(\E), \tau^iW_\infty)$; see \cite[p. 235]{LRS}. 
Hence, up to the action of $\Z$, $M(\E)$ uniquely determines the vector bundle $(M(\E), W_\infty)$. 
On the other hand, by \cite[(3.17)]{LRS}, the vector bundle $(M(\E), W_\infty)$ with its coherent $\sD$-action uniquely 
determines $\E$ and any $O_D$-motive is isomorphic to $M(\E)$ for some $\E$. This proves that the functor 
in question is fully faithful and essentially surjective. 
\end{proof}

The third category arises in the theory of analytic uniformization of Drinfeld-Stuhler modules. Let $\C_\infty$ be the completion 
of an algebraic closure of $\Fi$. Let $\phi$ be a Drinfeld-Stuhler $O_D$-module over $\C_\infty$. 
By fixing an isomorphism $\Lie(\gm_{a, \C_\infty}^d)\cong \C_\infty^d$, we get an action of $O_D$ on $\C_\infty^d$ via $\partial_\phi$. 
\begin{thm}\label{thmUnifDSM}
There is a discrete $O_D$-submodule $\La_\phi$ of $\C_\infty^d$,   
which is locally free of rank $1$,  and 
an entire $\F_q$-linear function $\exp_\phi: \C_\infty^d\to \C_\infty^d$, which is surjective 
with kernel $\La_\phi$, such that for any $b\in O_D$ the following diagram is commutative:
$$
\xymatrix{
0 \ar[r] & \La_\phi \ar[r] \ar[d]_-{\partial_{\phi}(b)}&  \C_\infty^d \ar[r]^-{\exp_\phi} \ar[d]_-{\partial_{\phi}(b)} & 
\C_\infty^d \ar[r] \ar[d]_-{\phi_b}& 0  \\ 
0 \ar[r] & \La_\phi \ar[r] &  \C_\infty^d \ar[r]^-{\exp_\phi} & \C_\infty^d \ar[r] & 0. 
}
$$
\end{thm}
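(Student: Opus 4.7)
The plan is to first reduce to the uniformization theory for Anderson $A$-modules by restricting $\phi$ along $A \hookrightarrow O_D$, and then upgrade to full $O_D$-equivariance via a uniqueness argument. By condition (ii) in Definition \ref{defDSM}, the restriction $\phi|_A$ defines an Anderson $A$-module structure on $\gm_{a, \C_\infty}^d$ of dimension $d$ with purely scalar tangent action $\partial_\phi(a) = \gamma(a) I_d$. The associated motive $M(\phi)$ is a pure Anderson $A$-motive of rank $d^2$, dimension $d$, and weight $1/d$, which is exactly the setting in which the analytic uniformization theorem for pure Anderson $A$-motives applies.

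First I would construct $\exp_\phi$ as an entire $\F_q$-linear power series $\exp_\phi(z) = z + \sum_{k\geq 1} C_k z^{q^k}$ with $C_k \in M_d(\C_\infty)$ (here $z^{q^k}$ denotes the coordinate-wise $q^k$-th power) satisfying
$$
\exp_\phi(\gamma(a) z) = \phi_a(\exp_\phi(z)) \quad \text{for all } a \in A.
$$
Fixing $a \in A \setminus \F_q$ and writing $\phi_a = \sum_j a_j \tau^j$, comparing coefficients of $z^{q^k}$ yields the recursion
$$
(\gamma(a)^{q^k} - \gamma(a))\, C_k \;=\; \sum_{j=1}^{k} a_j\, C_{k-j}^{(q^j)}, \qquad k \geq 1,
$$
where $C_i^{(q^j)}$ denotes entry-wise Frobenius. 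Since $\gamma$ is injective on $A$ and $a \notin \F_q$, the scalar $\gamma(a)^{q^k} - \gamma(a)$ is nonzero for $k \geq 1$, so each $C_k$ is determined inductively; entireness of the resulting series then follows from the same growth estimates as in the classical Drinfeld-module case.

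To extend to full $O_D$-equivariance, I would apply a uniqueness argument. For $b \in O_D$, set $f_1(z) = \phi_b(\exp_\phi(z))$ and $f_2(z) = \exp_\phi(\partial_\phi(b) z)$. Both are entire $\F_q$-linear functions with identical linear term $\partial_\phi(b) z$. Since $A$ lies in the center of $O_D$, $\phi_b$ commutes with each $\phi_a$ and $\partial_\phi(b)$ commutes with the scalar $\gamma(a) I_d$; consequently $f_1$ and $f_2$ both satisfy $f(\gamma(a) z) = \phi_a(f(z))$ for every $a \in A$. The uniqueness of solutions of this functional equation with prescribed linear term (by the same recursion as above) forces $f_1 = f_2$, which is precisely the commutativity of the diagram.

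Finally, for surjectivity of $\exp_\phi$ and the lattice structure of $\Lambda_\phi := \ker \exp_\phi$, I would invoke the analytic uniformization theorem for pure Anderson $A$-motives of weight $1/d$ (or equivalently the analytic description of $\sD$-elliptic sheaves from \cite{LRS}): $\exp_\phi$ is surjective, and $\Lambda_\phi$ is a discrete, finitely generated projective $A$-module of rank $d^2$. The commutative diagram shows that $\partial_\phi(O_D)$ preserves $\Lambda_\phi$, giving it the structure of a left $O_D$-module. Since $\dim_F(\Lambda_\phi \otimes_A F) = d^2 = \dim_F D$ and $D$ is a division algebra, $\Lambda_\phi \otimes_A F$ is a free $D$-module of rank $1$; combined with the hereditarity of the maximal order $O_D$, this yields local freeness of $\Lambda_\phi$ of rank $1$ over $O_D$. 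The main obstacle is precisely the uniformizability claim itself: for Anderson $A$-modules of dimension $d > 1$ this is not automatic but relies on the purity of the associated motive, whereas the $O_D$-equivariance and rank computations are either formal or direct consequences of the Drinfeld-module theory applied to $\phi|_A$.
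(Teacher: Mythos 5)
Your construction of $\exp_\phi$ via the coefficient recursion, and the uniqueness argument for the functional equation $f(\gamma(a)z)=\phi_a(f(z))$ that upgrades $A$-equivariance to $O_D$-equivariance, are both correct and are essentially how the commutativity of the diagram is obtained in the paper (this is Anderson's construction of the exponential and its functoriality, \cite[$\S$2]{Anderson}). The deduction of local freeness of rank one of $\La_\phi$ over $O_D$ from discreteness, $A$-rank $d^2$, and the fact that $D$ is a division algebra is also fine. The gap is in the one step that carries all the weight: the surjectivity of $\exp_\phi$. You invoke an ``analytic uniformization theorem for pure Anderson $A$-motives of weight $1/d$'' and assert that uniformizability ``relies on the purity of the associated motive.'' No such theorem exists: purity of an Anderson $A$-motive does not imply uniformizability (rigid analytic triviality). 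These are independent conditions throughout the theory --- Anderson already gives examples of non-uniformizable higher-dimensional $A$-modules in \cite{Anderson}, and in the moduli of pure objects the uniformizable ones form a proper (open) locus. So the general statement you lean on is false, and your reduction does not close.

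What actually forces uniformizability here is the $O_D$-action, not purity. The proof the paper relies on (Taelman, \cite[$\S\S$9--10]{TaelmanPhD}) proceeds by using Tsen's theorem to split $D$ after the relevant base change to $\C_\infty$, and then applying Morita equivalence to the $O_D^\opp\otimes_{\F_q}\C_\infty[\tau]$-module $M(\phi)$ to replace it by a rank-$d$, dimension-$1$ object, i.e.\ by the motive of a Drinfeld module; Drinfeld modules are always uniformizable \cite{Drinfeld}, and uniformizability is preserved under Morita equivalence and isogeny. This yields the surjectivity of $\exp_\phi$ (equivalently, by \cite[Thm.~4]{Anderson}, the existence of the lattice $\La_\phi$ of full $A$-rank $d^2$). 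If you replace your appeal to purity by this Tsen--Morita reduction to the Drinfeld module case, the rest of your argument goes through.
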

\begin{proof}
The exponential function $\exp_\phi$ is the function constructed by Anderson in \cite[$\S$2]{Anderson}. 
The existence of $\La_\phi$ (which is equivalent to the surjectivity of $\exp_\phi$ by \cite[Thm. 4]{Anderson}) 
was proved by Taelman \cite[$\S\S$9-10]{TaelmanPhD} in the terminology of $O_D$-motives. 
A crucial point in Taelman's proof 
is the use of Morita equivalence (see $\S$\ref{sME}), which reduces the proof 
to the analytic uniformization of Drinfeld modules (already known by the work of Drinfeld \cite{Drinfeld} 
and Anderson \cite{Anderson}).  
\end{proof}

\begin{cor}\label{cor3.6}
The ring $\End(\phi)$ is canonically isomorphic to the ring $$\End(\La_\phi):=\{c\in \C_\infty\mid c\La_\phi\subseteq \La_\phi\}.$$ 
\end{cor}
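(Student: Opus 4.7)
The plan is to exhibit the ring homomorphism
$$
\Psi: \End(\phi) \to \End(\La_\phi), \quad u \mapsto c,
$$
where $c \in \C_\infty$ is the scalar determined by $\partial(u) = cI$, and then check that it is a bijection. That the derivative $\partial(u)$ really is a scalar matrix uses Lemma \ref{lemFD}: the identity $u\phi_b = \phi_b u$ for all $b \in O_D$ yields $\partial(u)\partial_\phi(b) = \partial_\phi(b)\partial(u)$, and since $\partial_{\phi,\C_\infty}$ is surjective onto $M_d(\C_\infty)$, the element $\partial(u)$ is central, hence a scalar $cI$. Clearly $\Psi$ is a ring homomorphism, because $\partial$ is.

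Next I would verify that the scalar $c$ actually satisfies $c\La_\phi \subseteq \La_\phi$. The key intermediate step is the identity $u \circ \exp_\phi = \exp_\phi \circ (cI)$, which is the extension of the commutative squares of Theorem \ref{thmUnifDSM} from elements of $\phi(O_D)$ to arbitrary $u \in \End(\phi)$. One proves it by the standard comparison argument: both sides are entire $\F_q$-linear functions satisfying the same functional equation with respect to $A$ (coming from $u\phi_a = \phi_a u$ together with $\phi_a \circ \exp_\phi = \exp_\phi \circ \gamma(a)I$) and having the same derivative $cI$ at $0$, so their difference has all power-series coefficients forced to vanish by the generic $A$-characteristic of $\C_\infty$. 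From the identity it is immediate that $(cI)(\La_\phi) \subseteq \ker(\exp_\phi) = \La_\phi$, whence $c \in \End(\La_\phi)$. Injectivity of $\Psi$ is then instantaneous from Lemma \ref{lemECom}(1): if $\Psi(u)=0$ then $\partial(u)=0$, and since $\C_\infty$ has generic $A$-characteristic this forces $u = 0$.

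The harder direction is surjectivity. Given $c \in \End(\La_\phi)$, multiplication by $cI$ on $\C_\infty^d$ preserves $\La_\phi$ and, being scalar, commutes with the $O_D$-action through $\partial_\phi$. Hence it descends via $\exp_\phi$ to a set-theoretic $\F_q$-linear endomorphism of $\gm_{a,\C_\infty}^d(\C_\infty) = \C_\infty^d/\La_\phi$ that commutes with $\phi(O_D)$. The main obstacle is to show this endomorphism is algebraic, i.e., realized by an element $u \in M_d(\C_\infty[\tau])$, so that $u \in \End(\phi)$ with $\Psi(u) = c$ by construction. I would handle this exactly as in the proof of Theorem \ref{thmUnifDSM}: via the Morita equivalence coming from Tsen's theorem (following Taelman), the statement reduces to the analogous classical fact for Drinfeld modules, namely that multiplication by any $c \in \End(\La)$ on the lattice of a Drinfeld module $\varphi$ over $\C_\infty$ exponentiates to an element of $\C_\infty[\tau]$ commuting with $\varphi$. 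Said differently, Theorem \ref{thmUnifDSM} upgrades to an equivalence of categories between Drinfeld-Stuhler $O_D$-modules over $\C_\infty$ and discrete, locally free $O_D$-submodules of rank $1$ in $\C_\infty^d$, and the corollary is the endomorphism-ring computation inside the latter category, where a self-morphism is exactly a $\C_\infty$-linear self-map of $\C_\infty^d$ that both commutes with $\partial_\phi(O_D) = M_d(\C_\infty)$ (forcing it to be a scalar) and preserves $\La_\phi$.
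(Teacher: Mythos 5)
Your proof is correct and follows essentially the same route as the paper: the paper's (terser) argument likewise uses the functorial properties of $\exp_\phi$ from Anderson to identify $\End(\phi)$ via $\partial$ with the matrices preserving $\La_\phi$ and commuting with $\partial_\phi(O_D)$, and then invokes Lemma \ref{lemFD} to see that such matrices are scalars. Your extra detail on surjectivity (descending scalar multiplication through $\exp_\phi$ via the Anderson--Taelman equivalence) is exactly what the paper's citation of Anderson's functoriality is standing in for.
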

\begin{proof} The functorial properties of $\exp_\phi$ (cf. \cite[p. 473]{Anderson}) imply that $\partial$ maps $\End(\phi)$ isomorphically to the 
ring  
$$
\{P\in M_d(\C_\infty)\mid P\La\subseteq \La, P\partial_\phi(b)=\partial_\phi(b)P\text{ for all }b\in O_D\}. 
$$
Since any matrix which commutes with $\partial_\phi(O_D)$ must be a scalar, we get the desired isomorphism. 
\end{proof}

Now suppose $\C_\infty^d$ is equipped with an action of $O_D$ via some embedding $\iota:O_D\to M_d(\C_\infty)$. 
Suppose there is a discrete $\iota(O_D)$-submodule $\La\subset \C_\infty^d$ which is locally free of rank one. Then 
there is a unique Drinfeld-Stuhler $O_D$-module such that $\iota=\partial_\phi$ and $\La=\La_\phi$; 
this follows from \cite[$\S$10.1.3]{TaelmanPhD}, which itself crucially relies on \cite[Thm. 6]{Anderson}.  
Hence the category of Drinfeld-Shuhler modules over $\C_\infty$ is equivalent to the category of $O_D$-lattices as above.  
One can use this equivalence to give an analytic description of the set of isomorphism classes of Drinfeld-Shuhler modules 
over $\C_\infty$ as follows: Let
$$\Omega^d=\p^{d-1}(\C_\infty)-\bigcup_H H(\C_\infty)$$  be the Drinfeld 
symmetric space, where $H$ 
runs through the set of $\Fi$-rational hyperplanes in $\p^{d-1}(\C_\infty)$. Similar to the ring of finite ad\`eles 
$$
\A_f= \{(a_v)\in {\prod_{v\neq \infty}} F_v \mid a_v\in A_v \text{ for almost all $v$}\}, 
$$
define 
$$
D(\A_f) %={\prod_{v\neq \infty}}' D(F_v) 
= \{(a_v)\in {\prod_{v\neq \infty}} D_v \mid a_v\in O_D\otimes_A A_v \text{ for almost all $v$}\}. 
$$
Let $\hat{A}:=\prod_{v\neq \infty}A_v$ and 
$\widehat{O}_D:=\prod_{v\neq \infty} O_D\otimes_A A_v$. %Note that $D(\A_f)= \widehat{O}_D \otimes_A F$. 
We embed $D$ in $D(\A_f)$ diagonally. 
Fixing an isomorphism $D_\infty\cong M_d(\Fi)$, identifies $D^\times$ with a subgroup of $\GL_d(\Fi)$ and 
therefore induces an action of $D^\times$ on $\Omega$. 

\begin{prop}\label{prop1.12}
There is a one-to-one correspondence between 
the set of isomorphism classes of Drinfeld-Shuhler $O_D$-modules 
over $\C_\infty$ and the double coset space 
$$
D^\times\bs \Omega^d\times D(\A_f)^\times /\widehat{O}_D^\times,
$$
where $D^\times$ acts on both $\Omega^d$ and $D(\A_f)^\times$ on the left, and $\widehat{O}_D^\times$ 
acts on $D(\A_f)^\times$ on the right: 
$$
\gamma \cdot (z, \alpha)\cdot k= (\gamma z, \gamma \alpha k), \quad \gamma\in D^\times, \quad 
z\in \Omega^d, \quad \alpha\in D(\A_f)^\times, \quad k\in \widehat{O}_D^\times. 
$$
\end{prop}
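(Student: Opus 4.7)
My plan is to invoke the analytic uniformization in Theorem \ref{thmUnifDSM} together with the equivalence noted immediately after its proof, which identifies isomorphism classes of Drinfeld-Stuhler $O_D$-modules over $\C_\infty$ with isomorphism classes of pairs $(\iota,\La)$, where $\iota:O_D\hookrightarrow M_d(\C_\infty)$ is an embedding and $\La\subset\C_\infty^d$ is a discrete, locally free rank-one $\iota(O_D)$-submodule, morphisms being simultaneous $\GL_d(\C_\infty)$-conjugation. First I would normalize the embedding. Fix once and for all an isomorphism $D_\infty\cong M_d(\Fi)$, inducing a reference embedding $\iota_0:O_D\hookrightarrow M_d(\Fi)\hookrightarrow M_d(\C_\infty)$. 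By Lemma \ref{lemFD}, any $\iota$ extends to an isomorphism $O_D\otimes_A\C_\infty\cong M_d(\C_\infty)$, so Skolem--Noether yields a conjugator in $\GL_d(\C_\infty)$ sending $\iota$ to $\iota_0$; the residual stabilizer is the centralizer of $M_d(\C_\infty)$ in $\GL_d(\C_\infty)$, namely the scalars $\C_\infty^\times$.

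To a lattice $\La$ (with $O_D$ acting via $\iota_0$) I attach the data $(z,\alpha)$ as follows. The rank-one property makes $V:=\La\otimes_A F$ a left $D$-module isomorphic to $D$; choose an isomorphism $j:D\xrightarrow{\sim}V$ and let $w:=j(1)\in V\subset\C_\infty^d$. The inclusion $V\hookrightarrow\C_\infty^d$ reads $a\mapsto\iota_0(a)w$, and it extends $\Fi$-linearly to an $M_d(\Fi)$-linear map $M_d(\Fi)\to\C_\infty^d$, $A\mapsto A w$. A direct computation shows that this map is injective iff the coordinates $w_1,\ldots,w_d$ of $w$ are $\Fi$-linearly independent in $\C_\infty$; on the other hand, $\La$ is discrete in $\C_\infty^d$ iff the induced map $\La\otimes_A\Fi\to\C_\infty^d$ is injective, i.e.\ iff the same condition holds. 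Hence $z:=[w]\in\p^{d-1}(\C_\infty)$ automatically lies in $\Omega^d$. Simultaneously, $j$ realizes $\La$ as a locally free rank-one left $O_D$-ideal $I\subset D$, and the standard adelic classification of such ideals over the hereditary order $O_D$ (cf.\ \cite[\S 35]{Reiner}) yields a class $[\alpha]\in D^\times\bs D(\A_f)^\times/\widehat O_D^\times$.

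Finally, I would verify well-definedness and bijectivity. The residual $\C_\infty^\times$-scaling rescales $w$ and fixes $z$, without affecting $[\alpha]$. Changing $j$ corresponds to right-multiplication by $\beta\in D^\times$ (the left-$D$-module automorphisms of $D$), which sends $w$ to $\iota_0(\beta)w$, so $z$ transforms as $\gamma z$ for $\gamma=\beta$ acting through $\iota_0:D^\times\hookrightarrow\GL_d(\Fi)$, while $[\alpha]$ transforms by the matching left-multiplication by $\gamma$ in the double coset (after invoking the anti-involution $\alpha\mapsto\alpha^{-1}$ of $D(\A_f)^\times$ to reconcile left- and right-ideal conventions). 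The inverse map is produced directly: given $(z,\alpha)$, let $I\subset D$ be the left $O_D$-ideal $D\cap\bigcap_v O_{D,v}\alpha_v$, choose a lift $w\in\C_\infty^d$ of $z$, and form $\La:=\iota_0(I)\cdot w$; the condition $z\in\Omega^d$ guarantees discreteness. The main technical obstacle I anticipate is the bookkeeping of left- versus right-action conventions in the adelic classification of $O_D$-ideals; once these are consistently aligned with the $D^\times$-action on $\Omega^d$ induced by $\iota_0$, the remaining verification is a straightforward unwinding of definitions.
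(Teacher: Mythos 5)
Your proposal is correct and follows essentially the same route as the paper: pass to $O_D$-lattices via the uniformization theorem, send a lattice to the point of $\Omega^d$ determined by a generator of the free rank-one $D$-module $F\La$ (with discreteness matching the $\Omega^d$-condition) and to the adelic class of the locally free rank-one $O_D$-module $\La\subset D$, then quotient by the choice of generator, i.e., by $D^\times$. The only difference is that you make explicit the Skolem--Noether normalization of the embedding $\iota$, which the paper handles by fixing the isomorphism $D_\infty\cong M_d(\Fi)$ at the outset.
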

\begin{proof} This can be proved by a standard argument \cite[p. 74]{TaelmanPhD} (see also \cite[Thm. 4.4.11]{BS}). 
We recall this argument,  since we will use it later on. 
Let $\La\subset \C_\infty^d$ be an $O_D$-lattice, where $D$ acts on $\C_\infty^d$ via the fixed isomorphism $D_\infty\cong M_d(\Fi)$. 
The $F$-span $F\La$ is a free module over $D$ of rank $1$. 
A choice of generator of this module defines a point in $\p^{d-1}(\C_\infty)$. One checks that this point lies in $\Omega^d$ 
if and only if $\La$ is discrete. The 
embedding $\La\subset F\La=D$ can be tensored to an embedding $\hat{A}\La\subset D(\A_f)$ 
and the former can be recovered from the latter as $\La=\hat{A}\La\cap D$. Now $\hat{A}\La$ is a 
locally free module over $\widehat{O}_D$. Since all such modules are free, we conclude that 
the locally free $O_D$-submodules $\La\subset D$ 
of rank one are in bijection with the free rank one $\widehat{O}_D$-submodules of $D(\A_f)$ 
and the latter are in bijection with $D(\A_f)^\times/\widehat{O}_D^\times$. Finally, moding out 
by the choice of the generator of $F\La$, that is, by $D^\times$, we get the desired one-to-one correspondence.  
\end{proof}

%----------------------------------------

\section{Complex multiplication}\label{sCM} 

This section contains our main results about the endomorphism rings of Drinfeld-Stuhler modules. The proofs rely on the
concepts introduced in Section \ref{sMotSht}. 

\begin{thm}\label{thmEnd} Let $\phi$ be a Drinfeld-Stuhler $O_D$-module over an $A$-field $L$. Then: 
\begin{enumerate}
\item $\End_L(\phi)$ is a projective $A$-module of rank $\leq d^2$. 
\item $\End_L(\phi)\otimes_A \Fi$ is isomorphic to a subalgebra of the central division algebra over $\Fi$ with invariant $-1/d$.
\item If $L$ has generic $A$-characteristic, then $\End_L(\phi)$ is an $A$-order 
in an imaginary field extension of $F$ which embeds into $D$. In particular, $\End_L(\phi)$ 
is commutative and its rank over $A$ divides $d$. 
\item The automorphism group $\Aut_L(\phi):=\End_L(\phi)^\times$ is isomorphic to $\F_{q^s}^\times$ for some $s$ dividing $d$. 
\end{enumerate} 
\end{thm}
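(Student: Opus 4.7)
The plan is to rely on Tate modules for (1), the $\sD$-elliptic sheaf structure at $\infty$ for (2), and then deduce (3) and (4) from these. For (1), fix a prime $\fp\lhd A$ with $\fp\neq\chr_A(L)$ and set $T_\fp(\phi):=\varprojlim_n\phi[\fp^n](L^\sep)$. Combining Lemma \ref{lemSepIsog} with Lemma \ref{lem2.10} shows that $T_\fp(\phi)$ is free of rank $1$ over $O_D\otimes_A A_\fp$. Any endomorphism of $\phi$ induces a commuting endomorphism of $T_\fp(\phi)$, yielding an injection
$$\End_L(\phi)\otimes_A A_\fp \hookrightarrow \End_{O_D\otimes_A A_\fp}(T_\fp(\phi)) \cong (O_D\otimes_A A_\fp)^\opp,$$
whose target is free of rank $d^2$ over $A_\fp$. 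Combined with the remark after Lemma \ref{lemComEnd} ($\End_L(\phi)\otimes_A F$ is a division algebra), this shows $\End_L(\phi)$ is a finitely generated torsion-free $A$-module of rank $\leq d^2$, hence projective over the Dedekind ring $A$.

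For (2), which I expect to be the main obstacle, I would invoke the $\sD$-elliptic sheaf viewpoint: by Theorem \ref{thmShtMot}, $\phi$ corresponds to a $\sD$-elliptic sheaf $\E$ whose $\infty$-adic completion $W_\infty=H^0(\Spec(O_\infty\hat\otimes L),\cE_0)$ is free of rank $d$ over $L[\![\tau^{-1}]\!]$ and carries commuting left $L[\![\tau^{-1}]\!]$- and right $\sD_\infty\cong M_d(O_\infty)$-actions satisfying $\tau_\infty^{-d}W_\infty=\pi_\infty W_\infty$. Applying Morita equivalence via the idempotent $e_{11}\in M_d(O_\infty)$ reduces bimodule endomorphisms of $W_\infty$ to endomorphisms of the rank-$1$ $L[\![\tau^{-1}]\!]$-module $eW_\infty$ commuting with the residual right $O_\infty$-action; these are the right multiplications by elements of $L[\![\tau^{-1}]\!]^\opp$ centralizing the image of $\pi_\infty$, which up to a unit is $\tau^{-d\deg(\infty)}$. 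Using the commutation $\tau^{-1}c\tau=c^{1/q}$, one computes this centralizer to be $(\F_{q^{d\deg(\infty)}}\cap L)[\![\tau^{-1}]\!]^\opp$, which after tensoring with $\Fi$ is a subalgebra of the central division algebra over $\Fi$ of invariant $-1/d$.

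For (3), assume $L$ has generic $A$-characteristic. Lemma \ref{lemECom} gives commutativity of $\End_L(\phi)$ and Lemma \ref{lemComEnd} makes $K:=\End_L(\phi)\otimes_A F$ a field. By (2), $K\otimes_F\Fi$ embeds into a division algebra, so contains no nontrivial idempotents and is itself a field; this forces $K$ to have a unique place over $\infty$, i.e., $K$ is imaginary. Since every subfield of a central division algebra of dimension $d^2$ has $\Fi$-degree dividing $d$, $[K:F]=[K\otimes_F\Fi:\Fi]\mid d$. For $K\hookrightarrow D$, one patches local embeddings: at $\fp\neq\infty$ the Tate module gives $K\otimes F_\fp\hookrightarrow D_\fp^\opp\cong D_\fp$, and at $\infty$ the condition $[K\otimes\Fi:\Fi]\mid d$ produces $K\otimes\Fi\hookrightarrow M_d(\Fi)=D_\infty$; the local-global principle for field embeddings into central simple algebras \cite[(32.15)]{Reiner} then yields the global embedding. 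For (4), $\Aut_L(\phi)$ is a finite subgroup of the unit group of the division algebra $\End_L(\phi)\otimes_A F$, and by (2) its torsion elements lie in the cyclic group of roots of unity in the central division algebra over $\Fi$; hence $\Aut_L(\phi)$ is cyclic. Letting $u$ generate it, $\F_q[u]\cong\F_{q^r}\subseteq\End_L(\phi)$ forces $\Aut_L(\phi)=\F_{q^r}^\times$, and $r\mid d$ follows from (3) in the generic case (the constant field of $K$ has degree dividing $[K:F]\mid d$) or from (2) in general (since $\F_{q^r}\cdot\Fi$ is a subfield of the central division algebra over $\Fi$).
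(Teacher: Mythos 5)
Your architecture matches the theorem but parts (1) and (3) take genuinely different routes from the paper. For (1) the paper does not use Tate modules: it gets projectivity and a crude bound (rank $\leq d^4$) from Anderson's argument applied to the motive $M(\phi)$, and then obtains the sharp bound $\leq d^2$ as a byproduct of the $\infty$-adic argument proving (2) — the faithful action of $\End(\phi)$ on the Morita-reduced module $W_\infty'$ together with the computation $\End(\phi_\infty)\cong \F_{q^{d\deg(\infty)}}[\![\tau_\infty^{-1}]\!]$, which has $O_\infty$-rank $d^2$. Your (2) is essentially the paper's argument. For (3) the paper works over $\C_\infty$ and uses the uniformization lattice: $\End(\phi)\cong\End(\La_\phi)$ stabilizes $\La_\phi$ inside $F\La_\phi$, a free rank-one $D$-module, so it lands in $D$ directly, with no local-global principle. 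Your (4) is the same in spirit as the paper's.

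There are, however, some genuine soft spots. (a) In (1), an injection $\End_L(\phi)\otimes_A A_\fp\hookrightarrow(O_D\otimes_A A_\fp)^\opp$ at a single prime bounds the rank but does \emph{not} yield finite generation of $\End_L(\phi)$ over $A$: a torsion-free $A$-module such as $A[1/a]$ with $a\notin\fp$ becomes finitely generated after $\otimes_A A_\fp$ without being finitely generated over $A$. You need an additional discreteness or saturation input (e.g.\ discreteness of $\End$ in $\End\otimes_A\Fi$, which is what the paper's route through (2) supplies, or the motive argument it cites); moreover, injectivity of the \emph{tensored} map $\End_L(\phi)\otimes_A A_\fp\to\End(T_\fp(\phi))$ is itself a standard but nontrivial approximation argument that you assert rather than prove. (b) In (2), the centralizer must be taken with respect to all of $\phi_\infty(O_\infty)$, including the constants $\F_{q^{\deg(\infty)}}$, not just $\pi_\infty$; centralizing only $\tau^{-d\deg(\infty)}$ gives a ring supported on all powers of $\tau^{-1}$, which is too large to sit inside the $d^2$-dimensional division algebra when $\deg(\infty)>1$. (c) In (3), \cite[(32.15)]{Reiner} is a criterion for \emph{splitting} fields, and the equivalence ``splits iff embeds'' is stated only for $[K:F]=d$; when $[K:F]=e$ properly divides $d$ you need the finer criterion that $K\hookrightarrow D$ iff $e\cdot\mathrm{ind}(D\otimes_F K)=d$, plus the expression of that index as an lcm of local indices. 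Your local data does feed into such an argument, but the deduction is not the one-line citation you give, and the paper's lattice argument avoids the issue entirely.
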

\begin{proof}
It is enough to prove (1), (2) and (3) after extending $L$ to its algebraic closure, so we will assume that 
$L$ is algebraically closed. 

Since the $O_D$-motive $M(\phi)$ associated to $\phi$ is an Anderson $A$-motive
of dimension $d$ and rank $d^2$, the argument in \cite[$\S$1.7]{Anderson} implies that $\End_{A\otimes L[\tau]}(M(\phi))$  
is a projective $A$-module of rank $\leq d^4$ (see also \cite[Thm. 9.5]{BH} and \cite[Cor. 2.6]{HartlIsog}). 
Hence, thanks to Theorem \ref{thmModMot}, $\End(\phi)$ is a projective $A$-module of rank $\leq d^4$.  

Let $W_\infty$ be the $\sD_\infty\otimes L[\![\tau^{-1}]\!]$-module attached to $\phi$ in the proof of Theorem \ref{thmShtMot}. 
As we discussed, $W_\infty$ is well-defined up to the shifts $W_\infty\mapsto \tau W_\infty$. 
Since $\sD_\infty\cong \M_d(O_\infty)$, using the Morita equivalence, cf. \cite[p. 262]{LRS}, 
one concludes that $W_\infty$ is equivalent to an $O_\infty\otimes L[\![\tau^{-1}]\!]$-module $W_\infty'$
which is free of rank $1$ over $O_\infty$, free of rank $1$ over $L[\![\tau^{-1}]\!]$, and   
$\tau_\infty^{-d}W_\infty'=\pi_\infty W_\infty'$.
From $W_\infty'$ we get an $\F_q$-algebra homomorphism 
$$
\phi_\infty: O_\infty\to \End_{L[\![\tau^{-1}]\!]}\left(W_\infty'\right)=L[\![\tau^{-1}]\!], 
\quad \phi_\infty(\pi_\infty)=\tau_\infty^{-d}.  
$$
Thus, 
$$
\End_{O_\infty\otimes L[\![\tau^{-1}]\!]}(W_\infty')^\opp = 
\End(\phi_\infty)=\{ f\in L[\![\tau^{-1}]\!]\ |\ f\phi_\infty(b)=\phi_\infty(b)f \text{ for all }b\in O_\infty\}. 
$$
Since $O_\infty=\F_{q^{\deg(\infty)}}[\![\pi_\infty]\!]$, the image of $O_\infty$ under 
$\phi_\infty$ is the subring $\F_{q^{\deg(\infty)}}[\![\tau_\infty^{-d}]\!]$ of $L[\![\tau^{-1}]\!]$. Now it is easy to see that 
$$
\End(\phi_\infty)\cong \F_{q^{d\deg(\infty)}}[\![\tau_\infty^{-1}]\!],
$$
which is the maximal order in the central division algebra over $\Fi$ with invariant $-1/d$; cf. \cite[Appendix B]{LRS}. 
Definition 3.14 and Theorem 3.17 in \cite{LRS} imply that $\End(\phi)$ acts faithfully on $W_\infty'$, 
and this action gives an embedding $\End(\phi)\otimes_A \Fi \hookrightarrow \End(\phi_\infty)\otimes_{O_\infty}\Fi$ 
(see also \cite[Thm. 8.6]{BH}). Since 
$\rank_{O_\infty}\End(\phi_\infty) = d^2$, we get $\rank_{A}\End(\phi)\leq d^2$. This proves (1) and (2). 

To prove (3), note that $\phi$ is defined over some finitely generated subfield of $L$ which can be embedded 
into $\C_\infty$. So, without loss of generality, we assume $L= \C_\infty$. Combining (1) and (2) with Lemma \ref{lemECom} 
already implies that $\End(\phi)$ is an $A$-order in an imaginary field extension of $F$. We need to show 
that $\End(\phi)$ embeds into $D$. Let $\La_\phi$ be the $O_D$-lattice associated to $\phi$ 
by Theorem \ref{thmUnifDSM}. By Corollary \ref{cor3.6}, $\alpha\in \End(\phi)$ corresponds 
to $c\in \C_\infty$ such that $c\La_\phi\subseteq \La_\phi$. On the other hand, the $F$-span $F\La_\phi$ is a free 
module over $D$ of rank $1$, so $c$ corresponds to a unique element of $D$.  
Mapping $\alpha$ to that element, gives an embedding $\End(\phi)\hookrightarrow D$. 
Finally, the rank of $\End(\phi)$ over $A$ is equal to the degree of $\End(\phi)\otimes_F F$ over $F$, and  
it is well-known that a subfield of $D$ containing $F$ has degree over $F$ dividing $d$; cf. \cite[$\S$7]{Reiner}. 

To prove (4), note that we have established that $E:=\End_L(\phi)$ is an $A$-order in the 
division algebra $H:=E\otimes_A F$ over $F$. 
In this situation, $\alpha\in E$ is a unit if and only if $\Nr_{H/F}(\alpha)\in A^\times\cong \F_q^\times$, where 
$\Nr_{H/F}$ is the norm on $H$; cf. \cite[p. 224]{Reiner}.  We also proved that 
$H_\infty:=H\otimes \Fi$ is a subalgebra of the 
central division algebra over $\Fi$ with invariant $-1/d$. 
It is known that 
$\cH=\{\alpha\in H_\infty\ |\ \Nr_{H_\infty/\Fi}(\alpha)\in O_\infty\}$ 
is the unique maximal order of $H_\infty$, $\cM=\{\alpha\in H_\infty\ |\ \Nr_{H_\infty/\Fi}(\alpha)\in \fp_\infty\}$ 
is the unique maximal two-sided ideal of $\cH$, and $\cH/\cM$ is a subfield of $\F_{q^d}$; see (12.8), (13.2), (14.3) in \cite{Reiner}. 
Since the norm of any element of $E$ is in $A$, the subring $k:=E\cap \cH$ maps injectively into $\cH/\cM\hookrightarrow \F_{q^d}$.  
Thus, $k$ is isomorphic to a subfield of $\F_{q^d}$. Finally, %from what was said, 
it is clear that $\Aut_L(\phi)=k^\times$. 
\end{proof}

\begin{rem}\label{rem4.2}
	When $D=M_d(F)$, via the equivalence of $\S$\ref{sME}, 
	the statements of Theorem \ref{thmEnd} are equivalent to some well-known facts about the endomorphism 
	rings of Drinfeld $A$-modules of rank $d$; cf. \cite{GreenBook}.   
\end{rem}

\begin{example}\label{exampleCM} Let $\phi$ be a Drinfeld-Stuhler $O_D$-module over  
an algebraically closed field $L$ of generic $A$-characteristic. From Theorem \ref{thmEnd} we know that $\End(\phi)$ 
is an $A$-order in an imaginary field extension of $F$ of degree dividing $d$. 
We show that this bound is the best possible. Consider $\phi$ from Example \ref{example9}. Let $\Phi$ 
be the rank-1 Drinfeld $O_K$-module over $L$ from the same example. 
Let 
$$
E:=\{\diag(\Phi_\alpha, \dots, \Phi_\alpha)\ |\ \alpha\in O_K\}\subset M_d(L[\tau]). 
$$
It is clear that $E\cong O_K$. One easily checks that the elements of $E$ commute with $\phi_\alpha$, $\alpha\in O_K$, 
and $\phi_z$. Therefore $E\subseteq \End(\phi)$. Since $O_K$ is a maximal $A$-order in $K$, Theorem \ref{thmEnd} 
implies that $\End(\phi)\cong O_K$. 
\end{example}

\begin{defn}\label{defnCM}
For the rest of this section, unless indicated otherwise, $K$ denotes an imaginary field extension of $F$ of degree $d$, and $O_K$ 
denotes the integral closure of $A$ in $K$. An \textit{$A$-order} $E$ in $K$ is an $A$-subalgebra of $O_K$, 
which has the same unity element as $O_K$ and such that $O_K/E$ has finite cardinality. 
We say that a Drinfeld-Stuhler $O_D$-module $\phi$ over a field of generic $A$-characteristic 
has \textit{complex multiplication} by $E$ (or has \textit{CM}, for short) 
if $E\cong\End(\phi)$. Note that in that case $K$ necessarily embeds into $D$. A \textit{CM subfield} of $D$ 
is a commutative subfield of $D$ which is an imaginary extension of $F$ of degree $d$.  
%An embedding $i: K\to D$ is said to be \textit{optimal} relative to $E$ if $i(K)\cap O_D=i(E)$. Two optimal 
%embeddings $i_1, i_2$ relative to $E$ are \textit{equivalent} if $i_1=\gamma i_2\gamma^{-1}$ for some $\gamma\in O_D^\times$.  
\end{defn}

\begin{lem} 
Let $\phi$ be a Drinfeld-Stuhler $O_D$-module over a field $L$ of generic $A$-characteristic. Assume $E\subset \End_L(\phi)$. 
There is a Drinfeld-Stuhler $O_D$-module $\psi$ which is isogenous to $\phi$ over $L$ and $\End_L(\psi)\cong O_K$.  
\end{lem}
\begin{proof}
Let $\fc$ be the conductor of $E$, i.e., the largest ideal of $O_K$ which is also an ideal of $E$. Let $H:=\bigcap_{c\in \fc} \ker(c)$, 
where the intersection is taken in $\gm_{a, L}^d$. Since the action of $O_D$ on $\gm_{a, L}^d$ commutes with $E$, 
the finite \'etale subgroup scheme $H$ of $\gm_{a, L}^d$ is invariant under $\phi(O_D)$. 
From the discussion on page 155 in \cite{Goss} it follows that there is $u\in \End_{\F_q}(\gm_{a, L}^d)$ 
with $\ker(u)=H$. 
Let $b\in O_D$. Consider the endomorphism $u\phi_b$ of $\gm_{a, L}^d$. 
Since $H$ is invariant under $\phi(O_D)$, we have $H\subseteq \ker(u\phi_b)$. 
Then we can factor $u\phi_b$ as $\psi_bu$ for some $\psi_b\in \End_{\F_q}(\gm_{a, L}^d)$. 
(See Proposition 5.2 and Corollary 5.3 in \cite{HartlIsog} for some relevant scheme-theoretic facts about 
morphisms $\gm_{a, L}^d\to \gm_{a, L}^d$; in particular, note that $\gm_{a, L}^d/H\cong \gm_{a, L}^d$.) 
It is easy to see that $b\mapsto \psi_b$ gives an embedding $O_D\to M_d(L[\tau])$ and $\# \psi[b]=\# \phi[b]$. 
Since $\partial(u)\in M_d(L)$ is an invertible matrix, and $\partial_{\phi}(a)$ ($a\in A$) is the scalar matrix 
$\diag(\gamma(a), \dots, \gamma(a))$, we also get that $\partial_{\psi}(a)=\diag(\gamma(a), \dots, \gamma(a))$. 
Thus, there is a Drinfeld-Stuhler $O_D$-module $\psi$ over $L$ and an isogeny $u:\phi\to \psi$ whose kernel is $H$. 
Now one can apply the argument in the proof of \cite[Prop. 4.7.19]{Goss} to deduce that $\End(\psi)\cong O_K$. 
\end{proof}

We further investigate the properties of Drinfeld-Stuhler modules with CM using analytic uniformization. 
We fix an embedding $D^\times\to \GL_d(\Fi)$ through which $D^\times$ acts on $\Omega$. 
For $(z, \alpha)\in \Omega^d\times D(\A_f)^\times $, let $\La_{(z, \alpha)}$ be the $O_D$-lattice corresponding to $(z, \alpha)$; 
see Proposition \ref{prop1.12}. Let 
$K_z^\times:=\{\gamma\in D^\times\mid \gamma z=z\}$. 
From now on we will implicitly assume that $D$ is a division algebra.

\begin{lem}\label{lem4.5} $K_z:=K_z^\times \cup \{0\}$ is a subfield of $D$ and 
$\End(\La_{(z, \alpha)})=K_z\cap \alpha \widehat{O}_D \alpha^{-1}$. 
%We have $G_{(z, \alpha)}\cup \{0\}=\End\left(\La_{(z, \alpha)}\right)$. 
\end{lem}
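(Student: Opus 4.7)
The plan is to combine the explicit parameterization of $\La_{(z,\alpha)}$ from Proposition~\ref{prop1.12} with the observation that a scalar $c \in \C_\infty$ preserving $\La_{(z,\alpha)}$ is uniquely encoded by an element $\gamma \in D$ stabilizing the point $z$. First I fix a lift $\tilde z \in \C_\infty^d \bs \{0\}$ of $z$. Since $D$ is a division algebra embedded in $D_\infty \cong M_d(\Fi)$ and $\tilde z \neq 0$, the $F$-linear map $D \to \C_\infty^d$, $\gamma \mapsto \gamma\tilde z$, is injective. The condition $\gamma z = z$ in $\p^{d-1}(\C_\infty)$ means precisely that $\gamma\tilde z \in \C_\infty^\times \cdot \tilde z$, so each $\gamma \in K_z^\times$ determines a unique $c(\gamma) \in \C_\infty^\times$ with $\gamma\tilde z = c(\gamma)\tilde z$. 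The subfield claim then reduces to closure under $+$ (immediate from linearity of the eigenrelation) and commutativity: the identity $\gamma_1\gamma_2\tilde z = c(\gamma_1)c(\gamma_2)\tilde z = c(\gamma_2)c(\gamma_1)\tilde z = \gamma_2\gamma_1\tilde z$ forces $[\gamma_1,\gamma_2] = 0$ by injectivity of $\gamma \mapsto \gamma \tilde z$.

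For the identification of $\End(\La_{(z,\alpha)})$, I would unpack the proof of Proposition~\ref{prop1.12} and write $\La_{(z,\alpha)} = I\tilde z$, where $F\La_{(z,\alpha)} = D\tilde z$ and $I \subset D$ is the rank-one $O_D$-ideal whose ad\`elic completion is $\widehat{O}_D\alpha^{-1}$. Given a non-zero $c \in \End(\La_{(z,\alpha)})$ and any non-zero $\delta \in I$, the relation $c\delta\tilde z \in \La_{(z,\alpha)} \subset D\tilde z$ yields $c\tilde z = (c\delta\tilde z)\delta^{-1} = \gamma\tilde z$ for a unique $\gamma \in D^\times$, and by construction $\gamma \in K_z^\times$. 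Because scalar multiplication commutes with the $D$-action on $\C_\infty^d$, one computes $c(\delta\tilde z) = \delta(c\tilde z) = (\delta\gamma)\tilde z$, so $c\La_{(z,\alpha)} = (I\gamma)\tilde z$. By injectivity, the inclusion $c\La_{(z,\alpha)} \subseteq \La_{(z,\alpha)}$ is equivalent to $I\gamma \subseteq I$. Localizing place-by-place, this amounts to $\widehat{O}_D\alpha^{-1}\gamma \subseteq \widehat{O}_D\alpha^{-1}$, equivalently $\alpha^{-1}\gamma\alpha \in \widehat{O}_D$, i.e., $\gamma \in \alpha\widehat{O}_D\alpha^{-1}$. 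Combined with $\gamma \in K_z^\times$ and the trivial case $c = 0 \leftrightarrow \gamma = 0$, this identifies $\End(\La_{(z,\alpha)})$ with $K_z \cap \alpha\widehat{O}_D\alpha^{-1}$ via $c \leftrightarrow \gamma$, and the reverse inclusion follows by running the argument backwards.

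The main obstacle will be the bookkeeping for the left/right $O_D$-module conventions so that the ad\`elic ideal attached to $\alpha$ in Proposition~\ref{prop1.12} is correctly aligned with the ideal $I$ describing $\La_{(z,\alpha)}$, and in particular so that the resulting conjugate appears as $\alpha\widehat{O}_D\alpha^{-1}$ rather than its inverse-conjugate. Once this dictionary is fixed, the rest is a short localization argument and a direct verification on the character $c: K_z \to \C_\infty$.
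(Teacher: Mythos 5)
Your proof is correct and follows essentially the same route as the paper's: lift $z$ to $\tilde z$, use the eigenrelation $\gamma\tilde z=c\tilde z$ to set up the bijection $\gamma\leftrightarrow c$, and translate $c\La\subseteq\La$ into membership of $\gamma$ in the order $D\cap\alpha\widehat{O}_D\alpha^{-1}$ (the paper takes the lattice representative to be $O\tilde z$ with $O=D\cap\alpha\widehat{O}_D\alpha^{-1}$ directly, which is just a repackaging of your $I\tilde z$ with $O=O_r(I)$). If anything, your verification of the subfield claim is slightly more complete, since you check commutativity and closure under addition for all of $K_z$ rather than only for $E_{(z,\alpha)}$.
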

\begin{proof}
Let $\tilde{z}\in \C_\infty^d$ be an element mapping to $z$; such $\tilde{z}$ is well-defined 
up to a scalar multiple. 
Denote $O= D\cap \alpha \widehat{O}_D\alpha^{-1}$. 
The lattice $\La=O\tilde{z}$ 
is in the isomorphism class of $\La_{(z, \alpha)}$. 
We have 
$$
c\in \End(\La)\quad \Longleftrightarrow\quad c\La\subset \La\quad \Longleftrightarrow\quad c O\tilde{z}\subset O\tilde{z} 
\quad\Longleftrightarrow\quad
 O c\tilde{z}\subset O\tilde{z},
$$
where $c\in \C_\infty$ acts on $\tilde{z}$ as a scalar matrix. The inclusion $O c\tilde{z}\subset O\tilde{z}$ is equivalent to 
the existence of $\gamma\in O$ such that $\gamma \tilde{z}=c\tilde{z}$. This $\gamma$ obviously fixes $z$, and 
since $\gamma\in \alpha \widehat{O}_D\alpha^{-1}$, we get $\gamma\in K_z\cap \alpha \widehat{O}_D \alpha^{-1}=:E_{(z, \alpha)}$. 
Conversely, suppose $\gamma\in E_{(z, \alpha)}$, so $\gamma\in O$ and $\gamma \tilde{z}=c\tilde{z}$ 
for some non-zero $c\in \C_\infty$ (because $\gamma\in K_z$). Reversing the previous argument we see that $c\in \End(\La)$. 

Observe that $E_{(z, \alpha)}$ is a subring of $D$ since 
for $\gamma, \gamma'\in E_{(z, \alpha)}$ with $\gamma \tilde{z}=c \tilde{z}$,  
$\gamma' \tilde{z}=c' \tilde{z}$, we have $(\gamma+\gamma') \tilde{z}=(c+c') \tilde{z}$. 
Hence $K_z=E_{(z, \alpha)}\otimes_A F$ is a commutative subalgebra of $D$, i.e., $K_z$ is a subfield of $D$. 
Since the map $E_{(z, \alpha)}\to \End(\La)$, $\gamma\mapsto c$, 
is a homomorphism which extends to $K_z\to \C_\infty$, it must be injective. 
 But we have seen that $E_{(z, \alpha)}\to \End(\La)$ is also surjective, thus it is an isomorphism. 
\end{proof}

\begin{rem}
For any $\alpha\in D(\A_f)^\times$ and a CM field $K\subset D$, the intersection $K\cap \alpha  \widehat{O}_D \alpha^{-1}$ 
is an $A$-order in $K$. To prove this, first observe that $D\cap \alpha  \widehat{O}_D \alpha^{-1}$  
is a maximal order in $D$. Hence it is enough to prove that for any maximal order $\cM$ in $D$ the intersection 
$E:=K\cap \cM$ is an $A$-order. 
It is clear that $A\subset E$. By Exercise 4, p. 131, \cite{Reiner}, there is a maximal order $\cM'$ in $D$ 
such that $K\cap \cM'=O_K$. It is easy to see that $\cM'':=\cM\cap \cM'$ is an $A$-order in $D$ (in fact, 
it is a hereditary order; cf. \cite[$\S$40]{Reiner}). 
% I have a direct argument for this in terms of lattices and localizations. 
Hence $\cM''$ has finite index in $\cM'$. 
On the other hand, since $E=O_K\cap \cM''$, under the natural homomorphism $\cM'\to \cM'/\cM''$ the 
module $O_K$ maps onto $O_K/E$. Thus, $E$ has finite index in $O_K$, i.e., is an order. 
\end{rem}

\begin{lem}\label{lemFixPts}
Let $K$ be a CM subfield of $D$. The number of fixed points of $K^\times$ in $\Omega^d$ 
is non-zero and is at most $d$. 
\end{lem}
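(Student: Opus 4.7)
The plan is to use the one-dimensional $K_\infty$-module structure on $F_\infty^d$ to parametrize the simultaneous eigenlines for $K^\times$, and then apply a trace-pairing argument to check that these eigenlines avoid $F_\infty$-rational hyperplanes. The key structural observation is that since $K/F$ is imaginary of degree $d$, the completion $K_\infty := K\otimes_F F_\infty$ is a field of degree $d$ over $F_\infty$; via the composition $K\hookrightarrow D\hookrightarrow D_\infty\cong M_d(F_\infty)$, this makes $F_\infty^d$ into a one-dimensional $K_\infty$-vector space.

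For the upper bound, I would pick a primitive element $k\in K$ with $F(k)=K$ (using separability of $K/F$, the standard assumption in this context). The reduced characteristic polynomial of $k\in D$ has degree $d$ and coincides with the minimal polynomial of $k$ over $F$, hence is separable. Under $D_\infty\cong M_d(F_\infty)$, the matrix image of $k$ has this polynomial as its characteristic polynomial, so it has $d$ distinct eigenvalues in $\C_\infty$ and thus $d$ eigenlines in $\p^{d-1}(\C_\infty)$. Every fixed point of $K^\times$ is, in particular, an eigenline of $k$, so there are at most $d$ fixed points of $K^\times$ in $\p^{d-1}(\C_\infty)$, and a fortiori at most $d$ in $\Omega^d$.

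To exhibit the fixed points concretely, I would fix an isomorphism $F_\infty^d\cong K_\infty$ of $K_\infty$-modules. Extending scalars gives $K_\infty\otimes_{F_\infty}\C_\infty\cong \prod_{i=1}^d\C_\infty$ via the $d$ embeddings $\sigma_i\colon K_\infty\to \C_\infty$, and under this identification the action of $K^\times$ becomes diagonal. The simultaneous eigenlines for $K^\times$ are precisely the $d$ coordinate axes, spanned by the primitive idempotents $e_1,\dots,e_d$; these yield the $d$ candidate fixed points $[e_1],\dots,[e_d]\in\p^{d-1}(\C_\infty)$.

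The main (and really only nontrivial) step is verifying that each $[e_i]$ lies in $\Omega^d$, i.e.\ that $e_i$ is not contained in any $F_\infty$-rational hyperplane. By non-degeneracy of the trace pairing on the separable extension $K_\infty/F_\infty$, every $F_\infty$-linear functional $\ell\colon K_\infty\to F_\infty$ has the form $\ell(x)=\Tr_{K_\infty/F_\infty}(yx)$ for a unique $y\in K_\infty$. After extending to $K_\infty\otimes_{F_\infty}\C_\infty\cong\prod_j\C_\infty$, this functional sends $(x_j)_j$ to $\sum_{j=1}^d\sigma_j(y)x_j$, which vanishes on $e_i$ only if $\sigma_i(y)=0$, forcing $y=0$. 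Hence no $F_\infty$-rational hyperplane contains $e_i$, so $[e_i]\in\Omega^d$, establishing non-emptiness (indeed all $d$ fixed points lie in $\Omega^d$) and completing the proof.
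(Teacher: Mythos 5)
Your argument is correct, and for the upper bound it is essentially the paper's: pick a primitive element $k$ of $K$, note that its minimal polynomial has degree $d$ and hence equals its characteristic polynomial as a matrix in $\GL_d(F_\infty)$, and bound the fixed points of $K^\times$ by the eigenlines of $k$. Where you genuinely go beyond the paper is in the existence step. The paper's proof ends by invoking the fact that a matrix with equal minimal and characteristic polynomials has at least one and at most $d$ eigenvectors up to scaling; this counts fixed points in $\p^{d-1}(\C_\infty)$ but never verifies that any of them avoids every $F_\infty$-rational hyperplane, i.e., actually lies in $\Omega^d$. Your identification of $F_\infty^d$ with the degree-$d$ field $K_\infty$, the diagonalization $K_\infty\otimes_{F_\infty}\C_\infty\cong\prod_{i}\C_\infty$ exhibiting the primitive idempotents $e_i$ as the simultaneous eigenlines, and the trace-pairing computation showing that a functional $x\mapsto\Tr_{K_\infty/F_\infty}(yx)$ vanishes on $e_i$ only if $y=0$, supply exactly the missing verification (and show that in fact all $d$ eigenlines lie in $\Omega^d$). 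The one caveat worth recording is that both your count of $d$ distinct eigenvalues and the nondegeneracy of the trace form use separability of $K/F$ (equivalently, that $K\otimes_F F_\infty$ is a separable field extension of $F_\infty$), an assumption the paper does not state explicitly; its companion-matrix phrasing yields the weaker ``at least one, at most $d$'' eigenline count without separability, but still leaves the $\Omega^d$-membership of the fixed point unaddressed in that generality.
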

\begin{proof}
Since $F$ has transcendence degree $1$ over $\F_q$, we can find a primitive element $\gamma\in K$ 
such that $K=F(\gamma)$; cf. \cite{BMcL}. It is enough to prove that $\gamma$ has at least one and at most $d$ fixed points in $\Omega^d$. 
Note that the minimal polynomial of $\gamma$ over $F$ has degree $d$ and divides the characteristic polynomial 
of $\gamma$ considered as an element of $\GL_d(\Fi)$. 
Thus, the minimal polynomial is equal to the characteristic polynomial. 
The claim then follows from the fact that a matrix in $\GL_d(\C_\infty)$, whose characteristic and minimal 
polynomials are equal, has at least one and at most $d$ eigenvectors, up to scaling. 
\end{proof}

\begin{notn}
Let $K$ be a CM subfield of $D$, and $E$ be an $A$-order in $K$. Let 
$$
\T_E:=\{\alpha\in D(\A_f)^\times\mid K\cap \alpha  \widehat{O}_D \alpha^{-1}=E\}. 
$$
It is easy to check that $K^\times$ acts on $\T_E$ from the left by multiplication and $\widehat{O}_D^\times$ acts 
from the right. It is known that $\T_E$ is non-empty and 
the double coset space $K^\times \bs \T_E/\widehat{O}_D^\times$ has finite cardinality 
divisible by the class number of $E$; cf. \cite[pp. 92-93]{Vigneras}. 
The elements of $\T_E$ correspond to optimal embeddings of $K$ into the maximal orders of $D$ with respect to $E$.
\end{notn}

\begin{thm}\label{propCM} Let $S_K$ be the set of fixed points of $K^\times$ in $\Omega^d$. We have:
\begin{enumerate}
\item Up to isomorphism, the number of Drinfeld-Stuhler $O_D$-modules over $\C_\infty$ 
having CM by $E$ is equal to 
$\# (K^\times \bs S_K\times \T_E/\widehat{O}_D^\times)$. In particular, that number is 
finite and non-zero. 
\item A Drinfeld-Stuhler $O_D$-module having CM by $O_K$ can be defined over the Hilbert class field of $K$. 
\end{enumerate}
\end{thm}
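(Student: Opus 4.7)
The plan for (1) is to combine the analytic parametrization of Proposition~\ref{prop1.12} with the endomorphism formula $\End(\La_{(z,\alpha)}) = K_z \cap \alpha \widehat{O}_D \alpha^{-1}$ proved above. If $\phi$ is a Drinfeld-Stuhler $O_D$-module over $\C_\infty$ with CM by $E$, represented by a pair $(z,\alpha)$, then $K_z$ is a subfield of $D$ abstractly isomorphic to the fraction field $K$ of $E$, since $\End(\phi)\otimes_A F$ embeds into $K_z$ and $[K:F]=d$ is the maximal degree of a commutative subfield of $D$. By Skolem--Noether, after acting on $(z,\alpha)$ by a suitable $\gamma\in D^\times$, one may assume that $K_z=K$ and that $K\cap \alpha \widehat{O}_D \alpha^{-1} = E$ on the nose, i.e., $z\in S_K$ and $\alpha\in \T_E$. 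This yields a natural surjection
$$
K^\times \bs S_K \times \T_E / \widehat{O}_D^\times \twoheadrightarrow \{\,\text{isomorphism classes of Drinfeld-Stuhler $O_D$-modules with CM by }E\,\}.
$$
Its injectivity is the technical heart: if $(z_2, \alpha_2)=(\gamma z_1, \gamma \alpha_1 k)$ in $\Omega^d\times D(\A_f)^\times$ for some $\gamma\in D^\times$ and $k\in \widehat{O}_D^\times$, then $\gamma$ normalizes $K$ (because $K_{z_1}=K_{z_2}=K$) and normalizes $E$ (because $\alpha_1,\alpha_2\in \T_E$), and the standard fact that the centralizer of the maximal subfield $K$ in $D^\times$ equals $K^\times$ itself allows one to replace $\gamma$ by an element of $K^\times$ after adjusting $k$. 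Finiteness and non-emptiness then follow immediately from the preceding lemma ($1\le \#S_K\le d$) and the preceding remark on $K^\times\bs\T_E/\widehat{O}_D^\times$.

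For (2), I would transplant Hayes's CM theory for rank-$1$ Drinfeld modules \cite{HayesCFT} to the Drinfeld-Stuhler setting. Fix $\phi$ with CM by $O_K$ and consider the action of $\Gal(\overline{F}/K)$ on the set of $\overline{F}$-isomorphism classes of Drinfeld-Stuhler $O_D$-modules having CM by $O_K$. Via the adelic parametrization of part~(1), this Galois action should factor through $\Gal(K^\ab/K)$ and reduce to the right-multiplication action of $\widehat{K}^\times/\widehat{O}_K^\times$ on $\T_{O_K}/\widehat{O}_D^\times$ induced by the inclusion $\widehat{K}^\times \hookrightarrow D(\A_f)^\times$. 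After passing to $K^\times$-orbits, this action descends to $\Pic(O_K)=K^\times\bs \widehat{K}^\times/\widehat{O}_K^\times$; by the very definition of the Hilbert class field $H_K$ (the fixed field of the reciprocity image of $\widehat{O}_K^\times$), the subgroup $\Gal(\overline{F}/H_K)$ fixes the isomorphism class of $\phi$. Lemma~\ref{lemChow} then upgrades ``Galois-fixed isomorphism class'' to ``can be defined over $H_K$''.

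The main obstacle is the Galois equivariance in part~(2): one must trace the $\Gal(\overline{F}/K)$-action through the chain of equivalences $\phi \leftrightarrow M(\phi) \leftrightarrow \E \leftrightarrow \La_\phi$ provided by Theorems~\ref{thmModMot}, \ref{thmShtMot}, and \ref{thmUnifDSM}. The cleanest route is probably to use the construction in Example~\ref{example9} to reduce to the corresponding CM statement for the underlying rank-$1$ Drinfeld $O_K$-module, where the descent to $H_K$ is Hayes's theorem, and then to observe that the Drinfeld-Stuhler module manufactured from such a Drinfeld module by this recipe is automatically defined over the same field as its constituent Drinfeld $O_K$-module.
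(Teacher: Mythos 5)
For part (1) your argument is essentially the paper's: combine Proposition \ref{prop1.12} with the formula $\End(\La_{(z,\alpha)})=K_z\cap\alpha\widehat{O}_D\alpha^{-1}$ and use Skolem--Noether to move an arbitrary CM point into $S_K\times\T_E$. One caveat on your injectivity step: from $K_{z_1}=K_{z_2}=K$ you only get that $\gamma$ \emph{normalizes} $K$ in $D$, and the fact that the \emph{centralizer} of the maximal subfield $K$ is $K^\times$ does not by itself let you replace a normalizing $\gamma$ by an element of $K^\times$ (the normalizer can be strictly larger, inducing nontrivial automorphisms of $K$ and permuting $S_K$). The paper phrases this via the stabilizer in $D^\times$ of a point $z\in S_K$ being exactly $K^\times$; your write-up should be adjusted accordingly, but this is a presentational issue and does not change the method.

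Part (2) is where the real gap is. Your first route concludes with ``Lemma \ref{lemChow} then upgrades Galois-fixed isomorphism class to can be defined over $H_K$.'' Lemma \ref{lemChow} is a descent statement for \emph{morphisms} between modules that are already defined over the smaller field; it says nothing about a $\Gal(\overline{F}/H_K)$-stable isomorphism class containing a representative defined over $H_K$. What your (itself unproved, and substantial --- it is the analogue of the Main Theorem of Complex Multiplication) adelic description of the Galois action would give is only that $H_K$ is a \emph{field of moduli} for $\phi$; passing from field of moduli to field of definition is precisely the nontrivial content of Section \ref{sFM} and is not invoked in your argument. Your second route is much closer to what the paper actually does --- reduce to Hayes's theorem for rank-$1$ Drinfeld $O_K$-modules --- but the reduction cannot go through Example \ref{example9}: that construction requires $K/F$ cyclic and $O_D$ of the special shape $\bigoplus_i O_K z^i$, neither of which holds for a general CM subfield $K$ of $D$ and the given maximal order, and even when it applies it only manufactures particular CM modules rather than an arbitrary one. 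The paper instead takes an arbitrary $\phi$ with $\End(\phi)\cong O_K$, notes that its motive $M(\phi)$ is a module over $O_D^{\opp}\otimes_A O_K$, checks (by a discriminant computation) that this is a \emph{maximal} order in $D^{\opp}\otimes_F K\cong M_d(K)$, and applies Morita equivalence to convert $M(\phi)$ into a rank-$1$, dimension-$1$ $O_K$-motive, i.e., a rank-$1$ Drinfeld $O_K$-module $\varphi$; Hayes's theorem that $\varphi$ is defined over $H$ then transfers back to $\phi$. You would need this Morita step (or an equivalent device) to make your reduction work for all $\phi$ with CM by $O_K$.
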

\begin{proof} (1) In our set-up, we have fixed an embedding of $K$ into $D$. 
For each $(z, \alpha)\in S_K\times \T_E$ we 
have $\End(\La_{(z, \alpha)})=K_z\cap \alpha\widehat{O}_D\alpha^{-1}=K\cap \alpha\widehat{O}_D\alpha^{-1}=E$; cf. Lemma \ref{lem4.5}. 
Note that for any $\gamma\in D^\times$, we have $K_{\gamma z}=\gamma K_z\gamma^{-1}$, and 
so $$K_{\gamma z}\cap \gamma\alpha\widehat{O}_D\alpha^{-1}\gamma^{-1}=\gamma (K\cap \alpha\widehat{O}_D\alpha^{-1})\gamma^{-1}=
\gamma E\gamma^{-1}\cong E,$$
which implies $\End(\La_{\gamma(z, \alpha)})\cong \End(\La_{(z, \alpha)})$. 
%(Of course, this last isomorphism also follows from the fact that $\La_{\gamma(z, \alpha)}\cong \La_{(z, \alpha)}$.)

Now suppose $(z, \alpha)\in \Omega^d\times D(\A_f)^\times$ is such that $\End(\La_{(z, \alpha)})\cong E$. Then $K_z$ 
must be isomorphic to $K$, so $K_z$ is another embedding of $K$ into $D$.  
By the Skolem-Noether theorem \cite[(7.21)]{Reiner}, two embeddings $K\rightrightarrows D$ 
differ by an inner automorphism of $D$. Thus, there is $\gamma\in D^\times$ such that $K_z=\gamma K\gamma^{-1}$ 
and $\End(\La_{(z, \alpha)})= \gamma E\gamma^{-1}$. 
This implies that we can find $z'\in S_K$ such that $\gamma z'=z$. We also have 
$\gamma K\gamma^{-1}\cap \alpha\widehat{O}_D\alpha^{-1} = \gamma E \gamma^{-1}$, which 
implies $\gamma^{-1}\alpha\in \T_E$. Hence we can find $\alpha'\in \T_E$ such that $\alpha=\gamma \alpha'$. 
Overall, we conclude that $(z, \alpha)=\gamma (z', \alpha')$ for some $(z', \alpha')\in S_K\times \T_E$.  
The stabilizer in $D^\times$ of any $z\in S_K$ is $K^\times$. Hence the set of images in 
$D^\times\bs \Omega^d\times D(\A_f)^\times /\widehat{O}_D^\times$ of $(z, \alpha)\in \Omega^d\times D(\A_f)^\times$ 
with CM by $E$ is the double coset space $K^\times \bs S_K\times \T_E/\widehat{O}_D^\times$. 

(2) Let $\phi$ be a Drinfeld-Stuhler $O_D$-module with $\End(\phi)\cong O_K$. 
Let $M(\phi)$ be the $O_D$-motive associated to $\phi$. 
By definition, the action of $O_K$ on $\gm_{a, \C_\infty}^d$ commutes with $\phi(O_D)$, hence $M(\phi)$ 
is an $O_D^\opp\otimes_A O_K$-module. 
On the other hand, $O_D^\opp\otimes_A O_K$ is an $A$-order in $D^\opp\otimes_F K\cong M_d(K)$; cf. Exercise 6 on page 131 of \cite{Reiner}. 
Computing the discriminants, one checks that $O_D^\opp\otimes_A O_K$ is a maximal order in $M_d(K)$. 
%, hence is conjugate to $M_d(O_K)$ (see the same exercise). 
%Therefore, $M(\phi)$ is an $M_d(O_K)\otimes \C_\infty[\tau]$-module, which is free of rank $d$ over $\C_\infty[\tau]$. 
By the Morita equivalence (cf. \cite[p. 262]{LRS} and \cite[p. 68]{TaelmanPhD}), $M(\phi)$ 
is equivalent to an $O_K$-motive $M'$ of rank $1$ and dimension $1$ (as defined in \cite{vdHeiden}). 
Through a generalization of 
Anderson's result (cf. \cite[Thm. 2.9]{vdHeiden}), $M'$ corresponds to a Drinfeld $O_K$-module $\Phi$ of rank $1$. 
Since $\Phi$ can be defined over $H$ (see \cite[$\S$8]{HayesCFT}),  $\phi$ also can be defined over $H$. 
\end{proof}

\begin{rem} If $d=2$, $K$ is a separable quadratic extension of $F$, and $E=O_K$, then (cf. \cite[p. 94]{Vigneras})
 $$
 \#(K^\times \bs S_K\times \T_{O_K}/\widehat{O}_D^\times)= \#\Pic(O_K)\prod_{\fp\mid\fr(D)}\left(1-\left(\frac{K}{\fp}\right)\right), 
 $$
 where 
 $$
 \left(\frac{K}{\fp}\right)=
\begin{cases}
-1, & \text{if $\fp$ remains inert in $K$};\\
1, & \text{if $\fp$ splits in $K$};\\
0, & \text{if $\fp$ ramifies in $K$}. 
\end{cases} 
$$
 \end{rem}

From Theorem \ref{thmEnd} (4) we know that $\Aut(\phi)\cong \F_{q^s}^\times$ for some $s\mid d$. 
Moreover, Examples \ref{example12} and \ref{exampleCM} 
show that there do exist Drinfeld-Stuhler modules with maximal possible automorphism group $\F_{q^d}^\times$. 
We can now give necessary and sufficient conditions for the existence of Drinfeld-Stuhler modules with given automorphism 
group. 

\begin{cor}\label{corLargeAut} Let $L$ be an algebraically closed field of generic $A$-characteristic. 
Given $s\mid d$, there is a Drinfeld-Stuhler 
module $\phi$ over $L$ with $\Aut(\phi)\cong \F_{q^s}^\times$  if and only if 
\begin{itemize}
\item[(i)] $s$ is coprime to $\deg(\infty)$ and 
\item[(ii)] $\frac{d}{\gcd(s, \deg(v))}\inv_v(D)\equiv 0\ (\mod\ {\Z})$ for all $v\in \Ram(D)$.
\end{itemize}
\end{cor}
\begin{proof} Let $K:=\F_{q^s}F$. Note that $K/F$ is a field extension of $F$ of degree $s$. 
If $w$ is a place of $K$ over a place $v$ of $F$, then the local extension $K_w/F_v$ 
is unramified of degree $s/\gcd(s, \deg(v))$. This implies that $K$ is imaginary if and only if (i) holds.  
On the other hand, by \cite[Prop. A.3.4]{LaumonCDV}, $K$ embeds into $D$ if and only if $[K:F]$ divides $d$ and 
$$
\frac{d}{[K:F]}[K_w:F_v] \inv_v(D)\equiv 0\ (\mod\ {\Z}) \quad \text{for all $v\in \Ram(D)$}. 
$$ 
Thus, $K\hookrightarrow D$ if and only if (ii) holds. 

Suppose there is $\phi$ over $L$ with $\Aut(\phi)\cong \F_{q^s}^\times$. Then $K\subset \End(\phi)\otimes_A F$,  and
Theorem \ref{thmEnd} (3) implies that $K$ is isomorphic to an imaginary subfield of $D$. 
Conversely, suppose $K$ is an imaginary subfield of $D$. We can extend $K$ to a maximal 
subfield $K'$ of $D$ 
so that $\F_{q^s}$ is algebraically closed in $K'$ and the place of $K$ over $\infty$ does not split in $K'$. 
Then $K'$ is a CM subfield of $D$ and $O_{K'}^\times\cong \F_{q^s}^\times$. 
By a ``Lefschetz principle'' type argument we can assume 
$L=\C_\infty$. The existence of $\phi$ with $\Aut(\phi)\cong \F_{q^s}^\times$ then follows from 
Theorem \ref{propCM} (1) by taking $E=O_{K'}$. 
\end{proof}

Next, in a special case, we compute the number of isomorphism classes 
of Drinfeld-Stuhler modules over $\C_\infty$ with maximal possible automorphism group $\F_{q^d}^\times$. 
In principle, this amounts to an explicit computation of the order of the double coset space in Theorem \ref{propCM} (1), 
but we will take a somewhat different approach which also provides a group-theoretic interpretation of this number. 

\begin{prop}\label{propLargeAut} Assume $A=\F_q[T]$ and $d$ is coprime to $\deg(v)$ for all $v\in \Ram(D)$.
Up to isomorphisms, there are $d^{\# \Ram(D)}$ Drinfeld-Stuhler modules $\phi$ over $\C_\infty$ with 
 $\Aut(\phi)\cong \F_{q^d}^\times$. 
\end{prop}
\begin{proof} Note that in this case $\deg(\infty)=1$, so assumption (i) in Corollary \ref{corLargeAut} 
is automatically satisfied. Also note that the local index of $D$ at any $v\in \Ram(D)$ divides $d$, so 
the assumption that $d$ is coprime to $\deg(v)$ for all $v\in \Ram(D)$ 
is a stronger assumption than (ii) in Corollary \ref{corLargeAut}; it is equivalent to 
assuming that $D_v$, $v\in \Ram(D)$, is a division algebra. 

Via our fixed embedding $D^\times\to \GL_d(\Fi)$, we get an action of $\G:=O_D^\times$ on $\Omega^d$. 
The stabilizer $\G_z$ in $\G$ of a point $z\in \Omega^d$ is a finite subgroup. In fact, Lemma \ref{lem4.5} implies 
that $\G_z$ is the automorphism group of the Drinfeld-Stuhler module corresponding to the lattice $\La_{(z, 1)}$. 
Hence $\G_z\cong \F_{q^s}^\times$ for some $s\mid d$. We say that $z$ is \textit{special} if $\G_z\cong \F_{q^d}^\times$. 
It is clear that for a special point $z\in \Omega$ and any $\gamma\in \G$, the point  
$\gamma(z)$ is also special since $\G_{\gamma(z)}=\gamma\G_z\gamma^{-1}$. 
Denote the set of $\G$-orbits of special points by $\cS(\G)$. 
Denote the set of conjugacy classes of subgroups of $\G$ isomorphic to $\F_{q^d}^\times$ by $\cG(\G)$. 

For a subgroup $G\cong \F_{q^d}^\times$ of $\G$, let $\Fix(G):=\{z\in \Omega\mid gz=z\text{ for all }g\in G\}$. 
If we fix a generator $g$ of $G$, then $\Fix(G)$ coincides with the set of fixed points of $g$. 
The characteristic polynomial of $g$ is separable and irreducible over $\Fi$ (in fact it has coefficients in $\F_q$), so 
$\#\Fix(G)=d$; cf. Lemma \ref{lemFixPts}. 
Let $G_1$ and $G_2$ be two subgroups of $\G$ isomorphic to $\F_{q^d}^\times$. If $G_2=\gamma G_1\gamma^{-1}$ 
for some $\gamma\in \G$, then clearly $\Fix(G_2)=\gamma\Fix(G_1)$. Conversely, suppose $\Fix(G_2)=\gamma\Fix(G_1)$. 
Since $G_i=\G_z$ for any $z\in \Fix(G_i)$ ($i=1,2$) and $\G_{\gamma(z)}=\gamma\G_z\gamma^{-1}$, we see 
that $G_1$ and $G_2$ are conjugate.  
We claim that two distinct points in $\Fix(G)$ are not in the same $\G$-orbit. If this is not the case, then 
there is $\gamma\not \in G$ such that $\gamma G\gamma^{-1}=G$. This contradicts the statement 
at the end of the proof of \cite[Prop. 3.11]{PapikianPAMS}. (This uses the assumption that there is $v\in \Ram(D)$ 
for which $D_v$ is a division algebra.)
Overall, we see that the map $\cS(\G)\to \cG(\G)$, $z\mapsto \G_z$, is well-defined and $d$-to-$1$. 
 
When $A=\F_q[T]$, the number of isomorphism classes of Drinfeld-Stuhler modules over $\C_\infty$ with 
$\Aut(\phi)\cong \F_{q^d}^\times$ is equal to $\# \cS(\G)$, since in this case the double coset set of Proposition \ref{prop1.12} 
is in natural bijection with $\G\bs \Omega^d$.  Thus, 
we need to show that $\# \cG(\G)=d^{\# \Ram(D)-1}$. This last equality is proved in \cite[Prop. 3.11]{PapikianPAMS} 
under the assumptions of the proposition. 
 \end{proof}
 
 \begin{rem}
 Let $\cB^d$ be the geometric realization of the Bruhat-Tits building of $\mathrm{PGL}_d(\Fi)$; see \cite[Ch. 3]{DH}.  
Let $\la: \Omega^d \to \cB^d$ be the $\GL_d(\Fi)$-equivariant map defined in \cite[pp. 64-65]{DH}. 
One can show that the special points in $\Fix(G)$ are conjugate under a natural action of $\Gal(\F_{q^d}/\F_q)$ and 
map to the same vertex of $\cB^d$ under $\la$. The action of $\G$ on $\cB^d$ is studied in \cite{PapikianPAMS}. 
 \end{rem}

\section{Supersingularity}\label{sSS} 

The results on this section are not new. We essentially rephrase some of the results in \cite{LRS} and \cite{PapikianMZ} 
for $\sD$-elliptic sheaves in terms of Drinfeld-Stuhler modules. This reformulation might be 
useful for purposes of future reference. Moreover, Examples \ref{example23} and \ref{exampleEndExceptional} are quite 
instructive, since in special cases they deduce the main result about the 
endomorphism rings of supersingular Drinfeld-Stuhler modules by a direct calculation. 

In this section we fix a maximal ideal $\fp\lhd A$. 
Let $L$ be a field extension of $\F_\fp$ of degree $m$, so $L$ 
is a finite field of order $q^n$, where $n=m\cdot \deg(\fp)$.  Let $\pi=\tau^n$ be 
the associated Frobenius morphism. With abuse of notation, denote by $\pi$ also the 
diagonal matrix $\diag(\pi, \dots, \pi)\in M_d(L[\tau])$. Note that $\pi$ is in the center 
of $M_d(L[\tau])$ since $\tau^n\ell=\ell\tau^n$ for all $\ell\in L$. 
We assume that the $A$-field structure $\gamma: A\to L$ 
factors through the quotient morphism $A\to A/\fp$; in particular, $\chr_A(L)=\fp$. 

As we will see, the theory of Drinfeld-Stuhler modules over $L$ differs considerably depending on whether 
$D$ ramifies at $\fp$ or not (note that this difference already appeared in Lemma \ref{lemFD}). 

%$\boxed{\text{Case 1:} \quad  \fp\not\in\Ram(D)}$
\subsection{Case 1: $\fp\not\in\Ram(D)$}

\begin{thm}\label{thm28} 
Let $\phi$ be a Drinfeld-Stuhler $O_D$-module 
defined over $L$. Since $\pi$ commutes with $\phi(O_D)$, we have $\pi\in \End_L(\phi)$. 
Let $\tF:=F(\pi)$ be the subfield of $D':=\End_L(\phi)\otimes_A F$ generated over $F$ by $\pi$. Then:
\begin{enumerate}
\item $[\tF:F]$ divides $d$, and $\infty$ does not split in $\tF/F$. 
\item Let $\widetilde{\infty}$ be the unique place of $\tF$ over $\infty$. 
There is a unique prime $\tilde{\fp}\neq \widetilde{\infty}$ of $\tF$ that divides $\pi$. Moreover, $\tilde{\fp}$ lies above $\fp$.  
\item $D'$ is a central division algebra over $\tF$ of dimension $(d/[\tF:F])^2$ and with invariants 
$$
\inv_{\tilde{v}}(D')=\begin{cases}
-[\tF:F]/d & \text{if $\tilde{v}=\widetilde{\infty}$,}\\
[\tF:F]/d & \text{if $\tilde{v}=\tilde{\fp}$,} \\ 
-[\tF_{\tilde{v}}:F_v]\cdot \inv_v(D) & \text{otherwise,}
\end{cases}
$$
for each place $v$ of $F$ and each place $\tilde{v}$ of $\tF$ dividing $v$. 
\end{enumerate}
\end{thm}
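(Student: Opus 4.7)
The plan is to analyze $D' = \End_L(\phi)\otimes_A F$ one place at a time, using the three tools already developed in the paper: the embedding at $\infty$ from Theorem \ref{thmEnd}(2), the $v$-adic Tate modules $T_v(\phi) = \varprojlim_n \phi[v^n]$ for $v\neq\fp$ (Lemma \ref{lem2.10}), and the product formula for invariants of a central simple algebra. By the remark after Lemma \ref{lemComEnd}, $D'$ is a division algebra over $F$, and since $\pi=\tau^n$ is central in $M_d(L[\tau])$ it lies in the center of $D'$; thus $\tF\subseteq Z(D')$ automatically, and everything comes down to locating $\pi$ inside $D'$ at each place.

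For claim (1), apply Theorem \ref{thmEnd}(2): $D'\otimes_F \Fi$ embeds into the $\Fi$-central division algebra $H$ of invariant $-1/d$. Any commutative subalgebra of a division algebra is a field, so $\tF\otimes_F\Fi$ is a field; this simultaneously shows $\infty$ is non-split in $\tF$ and that $e:=[\tF:F]=[\tF_{\widetilde\infty}:\Fi]$ divides $d$ (subfields of $H$ have degree dividing $d$). Moreover, the centralizer $Z_H(\tF_{\widetilde\infty})$ is a central $\tF_{\widetilde\infty}$-division algebra with invariant $-e/d$ (standard behaviour of invariants under restriction of scalars), and $D'\otimes_F\Fi$ lands inside it. Once we establish that $D'$ is central over $\tF$ of dimension $(d/e)^2$, equality of dimensions forces $D'\otimes_F\Fi\cong Z_H(\tF_{\widetilde\infty})$, giving the invariant $-e/d$ at $\widetilde\infty$.

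For the places $v\notin\{\fp,\infty\}$, Lemma \ref{lem2.10} shows $T_v(\phi)$ is a free $O_D\otimes_A A_v$-module of rank $1$. The Tate-type theorem for Anderson $A$-modules over finite fields (available via the $O_D$-motive formulation of Section \ref{sMotSht} and \cite{HartlIsog}) gives
\[
\End_L(\phi)\otimes_A A_v \;\cong\; \End_{O_D\otimes A_v,\,A_v[\pi]}\bigl(T_v(\phi)\bigr).
\]
A Morita/double-centralizer computation then identifies $D'\otimes_F F_v$ with $Z_{D_v^{\opp}}(\tF\otimes_F F_v)$, which is central simple over each completion $\tF_{\tilde v}$ with invariant $-[\tF_{\tilde v}:F_v]\cdot \inv_v(D)$ (the opposite algebra produces the minus sign). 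In particular $\pi$ acts invertibly on $T_v(\phi)$, so $\tilde v(\pi)=0$ for every $\tilde v$ above such $v$; together with the observation at $\widetilde\infty$ (where $\pi$ corresponds, via the $W_\infty$-module of Theorem \ref{thmShtMot}, to a negative power of $\tau_\infty$), this forces the divisor of $\pi$ in $\tF$ to be supported on $\widetilde\infty$ and on primes above $\fp$, yielding most of claim (2).

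For the remaining invariant at $\fp$ and the uniqueness of $\tilde\fp$, I would invoke the product formula $\sum_{\tilde v}\inv_{\tilde v}(D')=0$: the contributions already computed at $\widetilde\infty$ and at $v\neq\fp,\infty$ sum, using $\sum_v \inv_v(D)=0$, to $-e/d$, so the total contribution from primes over $\fp$ must be $e/d$. Since $D'$ is a division algebra, all of its local invariants must have the same denominator, namely its index $d/e$; this rigidity is incompatible with more than one prime above $\fp$ carrying a non-zero invariant, which pins down a unique $\tilde\fp$ with $\inv_{\tilde\fp}(D')=e/d$ and confirms that $D'$ has index and hence dimension $(d/e)^2$ over $\tF$. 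The main obstacle is establishing the Tate-type isomorphism in Step 3 rigorously in the Drinfeld-Stuhler setting; given that (and the rank bound from Theorem \ref{thmEnd}(1) to rule out any remaining slack in the dimension count), the rest is central-simple-algebra bookkeeping combined with the analysis at $\infty$ already supplied by Theorem \ref{thmEnd}.
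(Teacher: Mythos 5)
Your overall strategy---compute $D'\otimes_F F_v$ place by place via Tate modules and close the books with the product formula---is the classical Honda--Tate template, but it is not the route the paper takes: the paper's proof is a two-line reduction, via the equivalences of Section \ref{sMotSht}, to the classification already established in \cite[(9.10)]{LRS}, where precisely this local analysis is carried out for $\sD$-elliptic sheaves. In effect you are proposing to reprove the LRS result, and two of its essential inputs are missing. First, the isomorphism $\End_L(\phi)\otimes_A A_v\cong\End_{O_D\otimes A_v,\,A_v[\pi]}(T_v(\phi))$ is not a routine consequence of Section \ref{sMotSht} or of \cite{HartlIsog}; it is the Tate-conjecture statement for these objects, and it carries the entire weight of the theorem. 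In particular, it is the only thing in your argument that could force $Z(D')=\tF$ rather than merely $\tF\subseteq Z(D')$, and the only thing that makes $D'\otimes_F F_v$ \emph{equal to}, rather than merely contained in, the centralizer $Z_{D_v^{\opp}}(\tF\otimes_F F_v)$. You correctly flag this as the main obstacle, but without it claims (1)--(3) all collapse: even the exact invariant $-[\tF:F]/d$ at $\widetilde{\infty}$ is derived conditionally on knowing $\dim_{\tF}D'=(d/[\tF:F])^2$, which you defer to a later step that in turn presupposes the invariants elsewhere.

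Second, your treatment of the place $\fp$ would fail even granting the Tate isomorphism at the other finite places. The assertion that all local invariants of a division algebra have the same denominator equal to its index is false in general (the index is the \emph{least common multiple} of the local indices), so the ``rigidity'' you invoke does not exist; and even if the invariants over $\fp$ could be shown to concentrate at one place, a prime with nonzero invariant is not the same thing as a prime dividing $\pi$, which is what claim (2) asserts. Pinning down the unique zero $\tilde{\fp}$ of $\pi$ away from $\widetilde{\infty}$ and the value $\inv_{\tilde{\fp}}(D')=[\tF:F]/d$ requires a genuinely local computation at $\fp$ --- the connected--\'etale decomposition and slopes of the local ($\fp$-divisible) structure of $\phi$ at $\fp$, which is exactly what \cite[$\S$9]{LRS} supplies. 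The product formula can only confirm the consistency of the answer; it cannot by itself produce the invariant at $\fp$ or the uniqueness of $\tilde{\fp}$.
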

\begin{proof}
Observe that $D'\cong \End(M(\phi)\otimes_A F)^\opp$. 
The theorem then follows from \cite[(9.10)]{LRS} and the equivalences of Section \ref{sMotSht}. 
(We should mention that in Section 9 of \cite{LRS} the $\sD$-elliptic sheaves 
are considered over the algebraic closure of $\F_\fp$. On the other hand, the arguments 
in that section apply also over $L$ with our choice of $(\tF, \pi)$ in place of a ``$\varphi$-pair'' in \cite{LRS}, 
since Theorem A.6 in \cite{LRS} can be proved for $(\tF, \pi)$ as in \cite[$\S$2.2]{LaumonCDV}.) 
\end{proof}

\begin{example}\label{example23} Let $A=\F_q[T]$. 
Let $L$ be the degree $d$ extension of $A/TA\cong \F_q$. 
Let $\phi$ be the Drinfeld-Stuhler $O_D$-module from Example \ref{example12}. 
Assume $(T)$ does not divide $\fr:=\fr(D)$. Fix a generator $h$ of $\F_{q^d}$ over $\F_q$.  
Our Drinfeld-Stuhler module $\phi$ is generated over $\F_q$ 
by 
$\phi_T=\diag(\tau^d, \dots, \tau^d)=\pi$, $\phi_h$, and $\phi_z$,  
which satisfy the relations 
$$
\phi_T\phi_h=\phi_h\phi_T, \quad \phi_T\phi_z=\phi_z\phi_T, \quad \phi_z\phi_h=\phi_{h^q}\phi_z, \quad \phi_z^d=\phi_\fr=\diag(\Phi_\fr, \dots, \Phi_\fr). 
$$  

With abuse of notation, for $i\geq 1$ let
$$
\tau^{i}:=\diag(\tau^{i}, \dots, \tau^{i})\qquad \text{and}\qquad h:=\diag(h, \dots, h). 
$$
Define 
$$
\kappa_i=\phi_z^i \tau^{d-i}, \quad 1\leq i\leq d-1. 
$$
Note that, since the image of $\Phi$ is in $\F_q[\tau]$, we have 
\begin{equation}\label{eqCh5ztau}
\phi_z^i\tau^{d-i}=\tau^{d-i}\phi_z^i. 
\end{equation}
In particular, $h$ and $\kappa_i$ commute 
with $\phi_z$. It is clear that these elements also commute with $\phi_T=\tau^d$. Finally,  
$h$ obviously commutes with $\phi_h$, and so does $\kappa_i$: 
$$
\kappa_i \phi_h=\phi_z^i\tau^{d-i}\phi_h=\phi_z^i\phi_{h^{q^{d-i}}}\tau^{d-i}=\phi_{h^{q^{d}}}\phi_z^i\tau^{d-i}
=\phi_h\kappa_i. 
$$  
We conclude that $E:=\F_q[\phi_T, h, \kappa_1, \dots, \kappa_{d-1}]\subseteq \End_L(\phi)$. 

Note that $h$ and $\kappa_i$ do not commute, 
\begin{equation}\label{eq5.1}
\kappa_i h= h^{q^{d-i}}\kappa_i =\sigma^{-i}(h)\kappa_i, 
\end{equation}
where $\sigma$ is the Frobenius automorphism in $\Gal(\F_{q^d}/\F_q)$. Let $E_i:=\F_q[\phi_T, h, \kappa_i]\subset E$. 
Since $\F_q[\phi_T, h]\cong \F_{q^d}[T]$, we have $E_i=O_K[\kappa_i]$, where $K=\F_{q^d}(T)$. 
Denote $D_i=E_i\otimes_A F$. Combining the relation \eqref{eq5.1} with 
$$
\kappa_i^d = \left(\phi_z^i \tau^{d-i}\right)^d \overset{\eqref{eqCh5ztau}}{=}   \phi_z^{di}  \tau^{d(d-i)} = \phi_{\fr}^i \phi_T^{d-1} = \phi_{\fr^i T^{d-i}}, 
$$
we see that 
for $i$ coprime to $d$ we have 
$$
D_i\cong (K/F, \sigma^{-i}, \fr^i T^{d-i})
$$
(see \eqref{eq-CAP} for the notation). By \cite[(30.4)]{Reiner}, 
for $i$ coprime to $d$, we have $$(K/F, \sigma^{-i}, \fr^i T^{d-i})\cong (K/F, \sigma, \fr^{-1} T).$$ 
Hence for $1\leq i, i'\leq d-1$ coprime to $d$ we have $D_i\cong D_{i'}$, and we denote this cyclic algebra by $\overline{D}$. 
The invariants of $\overline{D}$ are easy to compute using \eqref{eqGossRosen}: % or \cite[(31.7)]{Reiner}:
$$
\inv_v(\overline{D})=\begin{cases}
1/d & \text{if $v=(T)$}, \\ 
-1/d & \text{if $v=\infty$},\\ 
-\inv_v(D) & \text{otherwise}. 
\end{cases}
$$

Let $D':=\End_L(\phi)\otimes_A F$. By Theorem \ref{thmEnd}, we have $\dim_F(D')\leq d^2$. Since $\dim_F \overline{D}=d^2$, 
we conclude that $D'\cong \overline{D}$. Note that the invariants of $\overline{D}$ agree with 
the invariants of $D'$ given by Theorem \ref{thm28}, since in this case $\pi\in F$. 

Next, we claim that $\End_L(\phi)$ is a maximal $A$-order in $D'$. 
One can argue as follows: The discriminant of $E_{1}\subset \End_L(\phi)$ 
is $(\fr T^{d-1})^{d(d-1)}$ (cf. Example \ref{example9}), so $\End_L(\phi)\otimes_A A_\fp$ is a maximal 
$A_\fp$-order in $\overline{D}_\fp$ for all $\fp\neq (T)$. 
On the other hand, the discriminant of $E_{d-1}$ is $(\fr^{d-1}T)^{d(d-1)}$, so $\End_L(\phi)\otimes_A A_T$ 
is a maximal $A_T$-order in $\overline{D}_T$. Since an $A$-order in $\overline{D}$ is maximal 
if and only if it is locally maximal at all 
primes $\fp\lhd A$ (see \cite[(11.6)]{Reiner}), we conclude that $\End_L(\phi)$ is a maximal order.  

Finally, note that $\F_{q^d}^\times\cong \Aut_L(\phi)$. Indeed, $ \F_{q^d}^\times\cong \F_q(h)^\times\subseteq \Aut_L(\phi)$, 
so the equality holds by part (4) of Theorem \ref{thmEnd}. 

This example shows that the bounds on the rank of $\End_L(\phi)$ and the order of $\Aut_L(\phi)$ given 
by Theorem \ref{thmEnd} cannot be improved for fields with non-zero $A$-characteristic. 
\end{example}

\begin{prop}\label{propSS} Let $\phi$ be a Drinfeld-Stuhler $O_D$-module over $L$. The 
following are equivalent: 
\begin{enumerate}
\item $\dim_F(\End(\phi)\otimes_A F)=d^2$;  
\item some power of $\pi$ lies in $A$;
\item there is a unique prime $\tilde{\fp}$ in $\tF$ lying over $\fp$;
\item $\phi[\fp]$ is connected.  
\end{enumerate}
\end{prop}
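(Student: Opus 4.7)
The plan is to establish four pairwise equivalences using Theorem~\ref{thm28} as the master tool. For $(1)\Leftrightarrow(2)$: by Theorem~\ref{thmEnd}(1), $\End(\phi)$ has finite $A$-rank, so $\End_{L'}(\phi)=\End(\phi)$ for some finite extension $L'/L$. Applying Theorem~\ref{thm28}(3) to $\phi$ over $L'$, whose Frobenius is $\pi':=\pi^{[L':L]}$, gives $\dim_F(\End(\phi)\otimes_A F)=d^2/[F(\pi'):F]$. Thus (1) is equivalent to $F(\pi')=F$, i.e. $\pi^{[L':L]}\in F$; since $\pi$ lies in the finite $A$-module $\End_L(\phi)$ it is integral over $A$, and $A$ is integrally closed in $F$, so this forces $\pi^{[L':L]}\in A$, giving (2). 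The converse uses the same formula with $L'$ of degree divisible by the given exponent.

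For $(2)\Leftrightarrow(3)$ I would work in the smooth projective model $\widetilde C$ of $\tF$. By Theorem~\ref{thm28}(1,2), the divisor of $\pi$ on $\widetilde C$ is supported on $\{\tilde\fp,\winf\}$. If $\pi^k=a\in A$, then $(a)$ inherits this support on $\widetilde C$, which forces $(a)=\fp^s$ in $A$ and uniqueness of $\tilde\fp$ above $\fp$. Conversely, with $h$ the class number of $A$, pick $a_0\in A$ with $(a_0)=\fp^h$; under (3) its $\widetilde C$-divisor is supported on $\{\tilde\fp,\winf\}$, and since the subgroup of degree-zero divisors on these two fixed points has $\Z$-rank one, $(\pi^m)=(a_0^n)$ for some $m,n>0$. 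Hence $\pi^m=u\cdot a_0^n$ for a global unit $u$ of $\tF$, necessarily a root of unity in the finite constant field; raising to the order of $u$ yields $\pi^{mN}\in A$.

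For $(2)\Leftrightarrow(4)$: in the direction $(2)\Rightarrow(4)$, the previous step gives $\pi^k=a\in A\cap\fp$, so $\phi_a=\pi^k=\diag(\tau^{nk},\ldots,\tau^{nk})$ has infinitesimal kernel; since $\phi[\fp]\subseteq\phi[a]$ and closed subgroup schemes of infinitesimal group schemes are infinitesimal, $\phi[\fp]$ is connected. For the converse I would prove $(4)\Rightarrow(3)$: suppose there were another prime $\tilde\fq\neq\tilde\fp$ of $\tF$ above $\fp$; then by Theorem~\ref{thm28}(3) together with $\fp\notin\Ram(D)$ one gets $\inv_{\tilde\fq}(D')=0$, so $D'$ splits at $\tilde\fq$. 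Passing to the $\fp$-adic motive $M(\phi)\otimes_A A_\fp$, this split place contributes a slope-zero summand to the Newton polygon of the Frobenius action, equivalently a nonzero piece of $\phi[\fp^\infty](L^\sep)$, contradicting the connectedness of $\phi[\fp]$. This last step is the main obstacle, because it genuinely requires the Dieudonn\'e/Newton-polygon decomposition of $M(\phi)\otimes_A A_\fp$ and the identification of its slope-zero summand with the \'etale part $\phi[\fp^\infty](L^\sep)\otimes_{A_\fp}F_\fp$; this input is implicit in \cite[$\S$9]{LRS}, whereas the other equivalences are essentially formal consequences of Theorem~\ref{thm28}.
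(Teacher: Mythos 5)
Your proof follows the paper's for three of the four legs. The equivalence $(1)\Leftrightarrow(2)$ is the paper's argument verbatim (pass to a finite $L'$ over which all endomorphisms are defined, read off $\dim_F D'=d^2/[\tF:F]$ from Theorem~\ref{thm28}, use integrality of $\pi$ over $A$). Your $(2)\Leftrightarrow(3)$ is phrased with divisors on the smooth model of $\tF$ supported on $\{\tilde{\fp},\winf\}$, whereas the paper works with $f=\Nr_{\tF/F}(\pi)$ and shows $\pi^w/f^u$ is a constant; these are the same argument in different clothing, and yours is arguably cleaner. Your $(2)\Rightarrow(4)$ is identical to the paper's. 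The genuine divergence is the implication out of $(4)$: the paper proves $(4)\Rightarrow(1)$ by choosing $\fp^h=(a)$ principal, translating connectedness of $\phi[a]$ into nilpotence of $\tau$ on $M(\phi)/aM(\phi)$ via \cite[Thm.~5.9]{HartlIsog}, and then citing \cite[$\S$6]{PapikianMZ} for the conclusion; you instead propose $(4)\Rightarrow(3)$ via the slope decomposition at $\fp$. That route is viable, but as written it has a soft spot: splitness of $D'$ at $\tilde{\fq}$ is not what produces the slope-zero summand --- the invariant of $D'$ is also $0$ at every place away from $\fp$ and $\infty$ with no slope consequence there. The operative fact is $\ord_{\tilde{\fq}}(\pi)=0$, which follows directly from Theorem~\ref{thm28}(2) since $\tilde{\fp}$ is the \emph{unique} finite place dividing $\pi$; what you then need is the dictionary identifying the \'etale part of $\phi[\fp^\infty]$ with the slope-zero factors of $M(\phi)\otimes_A A_\fp$, i.e.\ with the places $\tilde{v}\mid\fp$ where $\ord_{\tilde{v}}(\pi)=0$. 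That dictionary is exactly the nontrivial input you flag, and it is the same content the paper outsources to \cite{HartlIsog} and \cite{PapikianMZ} rather than reproving; so your proof is sound in structure and trades one external input for an essentially equivalent one, at the cost of a slightly misattributed mechanism in the $(4)\Rightarrow(3)$ step.
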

\begin{proof}
Let $L'$ be a finite extension of $L$ of degree $c$.  
The Frobenius of $L'$ is $\pi^c$. Applying Theorem \ref{thm28}, we see that $\dim_F (\End_{L'}(\phi)\otimes_A F)=d^2$ 
is equivalent to $F(\pi^c)= F$, and since $\pi$ is integral over $A$, this last condition is equivalent to $\pi^c\in A$. 
This shows that (1) and (2) are equivalent. 

Assume (2), i.e., $\pi^c\in A$ for some $c\geq 1$. By 
Theorem \ref{thm28}, $\ord_\fp(\pi^c)\neq 0$. This implies $\ord_\fP(\pi^c)\neq 0$ for any prime $\fP$ in $\tF$ 
lying over $\fp$, and hence also $\ord_\fP(\pi)\neq 0$. Applying Theorem \ref{thm28} again, we conclude that $\fP=\tilde{\fp}$ 
is unique, which is (3). To prove (3)$\Rightarrow$(2), let $f=\Nr_{\tF/F}(\pi)$. We have  
$\ord_\fp(f)>0$ and $\ord_{\fp'}(f)=0$ 
for any prime $\fp'\lhd A$ not equal to $\fp$. Let $\ord_{\tilde{\fp}}(\pi)=u$ and $\ord_{\tilde{\fp}}(f)=w$. 
The element $\pi^w/f^u\in \tF$ has no zeros or poles  away from $\widetilde{\infty}$, 
since $\tilde{\fp}$ is the unique prime over $\fp$ by assumption.  
This implies that $\pi^w/f^u$ lies in the algebraic closure $\F$ of $\F_q$ in $\tF$. 
Therefore, $\pi^{w\kappa}=f^{u\kappa}\in A$, where $\kappa=\#\F-1$. 

Assume (2). Then $\pi^c$ generates $\fp^h$ for some $c, h\geq 1$. This implies that $\phi[\fp]$ is connected, since 
$\phi[\fp]\subseteq \phi[\fp^h]=\ker(\pi^c)$, and $\ker(\pi^c)$ is obviously connected. Thus, (2)$\Rightarrow$(4). 
Conversely, assume $\phi[\fp]$ is connected. Then $\phi[\fp^h]$ is connected for all $h\geq 1$. Choose $h$ 
such that $\fp^h=(a)$ is principal. The assumption that $\phi[a]$ is connected is equivalent to the action of $\tau$ 
on $M(\phi)/aM(\phi)$ being nilpotent, i.e., $\tau^r M(\phi)\subset aM(\phi)\subset \fp M(\phi)$ for all large enough integers $r$; 
cf. \cite[Thm. 5.9]{HartlIsog}.  
By \cite[$\S$6]{PapikianMZ}, this last condition implies that $\dim_F(\End(\phi)\otimes_A F)=d^2$. Hence (4)$\Rightarrow$(1).
 \end{proof}

\begin{defn} A Drinfeld-Stuhler $O_D$-module $\phi$ over $\overline{\F}_\fp$ satisfying the 
equivalent conditions of Proposition \ref{propSS} is called \textit{supersingular}.
(In particular, the Drinfeld-Stuhler module $\phi$ in Example \ref{example23} is supersingular.)
\end{defn}

\begin{thm}\label{thmCh5Endss} Let $\phi$ be a supersingular Drinfeld-Stuhler $O_D$-module over $\overline{\F}_\fp$. We have: 
\begin{enumerate}
\item $\End(\phi)$ is a maximal $A$-order in $\End(\phi)\otimes F$; 
\item $\phi$ can be defined over the extension of $\F_\fp$ of degree $d\cdot \#\Pic(A)$; 
\item the number of isomorphism classes of supersingular Drinfeld-Stuhler $O_D$-modules over $\overline{\F}_\fp$ 
is equal to the class number of $\End(\phi)$; 
\item all supersingular Drinfeld-Stuhler $O_D$-modules are isogenous over $\overline{\F}_\fp$ . 
\end{enumerate}
\end{thm}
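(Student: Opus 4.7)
The plan is to follow the classical Deuring--Gekeler template for supersingular objects, exploiting the motivic and $\sD$-elliptic sheaf reformulations of Section \ref{sMotSht}. Set $D' := \End(\phi)\otimes_A F$. By Proposition \ref{propSS}, supersingularity gives $\pi^c \in A$ for some $c \geq 1$, hence $\tF = F$ in Theorem \ref{thm28}, so $D'$ is the central division algebra over $F$ of dimension $d^2$ with invariants $-1/d$ at $\infty$, $1/d$ at $\fp$, and $-\inv_v(D)$ elsewhere. To establish (4), I will translate to $O_D$-motives via Theorem \ref{thmModMot} and show that all generic motives $M(\phi)\otimes_A F$ are isomorphic as $O_D^\opp\otimes_F F[\tau]$-modules: this generic motive is simple with endomorphism algebra $D'^\opp$, and the pair $(F,\pi)$ together with the coherent $\sD_\infty$-action pin it down up to isomorphism, yielding the same data for every supersingular $\phi$. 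A standard descent argument then promotes isomorphism of generic motives to an isogeny of the underlying $O_D$-motives.

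For (1), I will establish maximality of $R := \End(\phi)$ locally at each place $v$ of $F$. At every $v \notin \{\infty,\fp\}\cup \Ram(D)$, the Tate module $T_v(\phi)$ is free of rank $1$ over $O_D\otimes_A A_v \cong M_d(A_v)$; the injection $R \otimes_A A_v \hookrightarrow \End_{O_D\otimes A_v}(T_v(\phi))$ together with a rank count forces equality, and the target is a maximal $A_v$-order in $D'_v$ by Morita equivalence. At $v\in\Ram(D)$ the same argument runs, using the classification of $A_v$-orders in $D_v$. At $v=\fp$, the $\fp$-divisible $O_D$-module $\phi[\fp^\infty]$ combined with \cite{HartlIsog} yields the local endomorphism ring with its maximal-order structure. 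At $v=\infty$, I invoke the $W_\infty$-construction from the proof of Theorem \ref{thmShtMot}, which identifies $\End(W_\infty)$ with the maximal $O_\infty$-order $\F_{q^{d\deg(\infty)}}[\![\tau_\infty^{-1}]\!]$ in the division algebra of invariant $-1/d$; the same dimension count embeds $R \otimes_A O_\infty$ into it with full image.

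With (4) and (1) in hand, (3) follows by fixing a base supersingular $\phi_0$, setting $R := \End(\phi_0)$, and sending $\psi$ to the left $R$-module $\Hom(\psi, \phi_0)$: this is a bijection between isomorphism classes of supersingular Drinfeld-Stuhler modules and left ideal classes of $R$ (the inverse being the motivic cokernel construction applied to $M(\phi_0)$), whose cardinality is by definition the class number of $R$. For (2), I will analyse the minimal $n$ for which $\pi = \tau^n$ is already an endomorphism of a form of $\phi$ defined over $\F_{q^n}$: since $\tF = F$, we have $(\pi) = \fp^h$ in $A$ for some $h \geq 1$, and matching $\infty$-adic valuations (using $\inv_\infty(D') = -1/d$ and the relation $\tau_\infty^{-d} W_\infty = \pi_\infty W_\infty$) forces the minimal such $n$ to be $d$ times the order of $\fp$ in $\Pic(A)$. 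Since that order divides $\#\Pic(A)$, the module $\phi$ descends to the extension of $\F_\fp$ of degree $d \cdot \#\Pic(A)$.

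The hard part will be (2): the valuation bookkeeping must identify the \emph{minimal} field of definition, not merely a sufficient one, which requires combining the invariant data of $D'$ with a descent argument for $\sD$-elliptic sheaves along the lines of Section \ref{sMotSht}. A secondary subtlety lies in part (1) at primes in $\Ram(D)$, where $O_D\otimes A_v$ is not a matrix algebra and the Morita reduction must be replaced by the classification of hereditary orders.
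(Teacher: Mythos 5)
The paper disposes of this theorem entirely by citation: (1) and (3) to \cite[Thm.\ 6.2]{PapikianMZ}, (2) to \cite[\S 5]{PapikianCrelle1}, and (4) to \cite[(9.13)]{LRS}. Your proposal instead sketches a self-contained Deuring--Gekeler-style proof, which is a legitimately different (and more informative) route, and its overall architecture --- local analysis of $\End(\phi)$ place by place, the ideal-class correspondence $\psi\mapsto\Hom(\psi,\phi_0)$, and the reduction of the isogeny statement to the classification of the pair $(F,\pi)$ --- is the right shape and is essentially what the cited references carry out. However, as written the proposal has genuine gaps at the points where the real work lives.

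The most concrete one is in part (1): you claim that the injection $R\otimes_A A_v\hookrightarrow\End_{O_D\otimes A_v}(T_v(\phi))$ ``together with a rank count forces equality.'' An injection of $A_v$-lattices of equal rank is not surjective in general (e.g.\ $\fp M_d(A_v)\subset M_d(A_v)$), so the rank count proves nothing beyond what you already know from Proposition \ref{propSS}(1). What is actually needed is the Tate-conjecture statement for Drinfeld--Stuhler modules over finite fields --- surjectivity of $\End(\phi)\otimes_A A_v$ onto the commutant of Frobenius in $\End_{O_D\otimes A_v}(T_v(\phi))$ --- which in the supersingular case (Frobenius central) gives the full maximal order. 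This is a substantive theorem, not a dimension count, and it must be invoked or proved. A similar issue occurs in part (4): the assertion that ``the pair $(F,\pi)$ together with the coherent $\sD_\infty$-action pin it down up to isomorphism'' is precisely the Honda--Tate-type classification of isogeny classes by $\varphi$-pairs, i.e.\ the content of \cite[(9.13)]{LRS} and its Appendix; it does not follow from the generic motive being simple with endomorphism algebra $D'^{\opp}$. Finally, for (2) you correctly flag the difficulty yourself, but note that the theorem only asserts a \emph{sufficient} field of definition, so chasing the minimal $n$ is unnecessary; what is missing instead is any actual descent mechanism from $\overline{\F}_\fp$ to the finite field (centrality of $\pi=\tau^n$ in $\End$ does not by itself produce a model over $\F_{q^n}$ --- one needs a Galois-descent or Weil-style argument, e.g.\ via the results of Section \ref{sFM} or the explicit construction in \cite{PapikianCrelle1}).
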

\begin{proof}
(1) and (3) are proved in \cite[Thm. 6.2]{PapikianMZ}, (2) follows from \cite[$\S$5]{PapikianCrelle1}, 
(4) follows from \cite[(9.13)]{LRS}. 
\end{proof}

%\begin{rem}\label{rem5.6DMsimilar2}  
Similar to Remark \ref{rem4.2}, when $O_D=M_d(A)$, 
	the statements of Theorem \ref{thmCh5Endss} are equivalent to some well-known facts about the endomorphism 
	rings of supersingular Drinfeld $A$-modules of rank $d$; cf. \cite{GekelerFDM}.   
%\end{rem}

\subsection{Case 2: $\fp\in\Ram(D)$} \label{ssSScase2}
We will make a stronger assumption that $D$ is not just ramified at $\fp$, but,  in fact,  
$D_\fp:=D\otimes_F F_\fp$ is a division algebra. 

\begin{lem}\label{lemCh5lemAllss}
	If $D_\fp$ is a division algebra, then a Drinfeld-Stuhler $O_D$-module over a field $L$ of $A$-characteristic $\fp$ 
	is necessarily supersingular, i.e., $\phi[\fp]$ is connected. 
\end{lem}
\begin{proof} The proof is essentially the same as of the analogous fact for quaternionic abelian surfaces; cf. \cite[Lem. 4.1]{RibetBM}. 
	
	For each integer $n\geq 0$, let 
	$H_n:=\phi[\fp^n](L^\sep)$. For $n'\geq n$ we have the inclusion $H_n\subset H_{n'}$ compatible with 
	the left $O_D$-module structures. We define the Tate module of $\phi$ at $\fp$ as 
	$$
	T_\fp(\phi)=\Hom_{A_\fp}(F_\fp/A_\fp, \underset{\substack{\To \\ n}}{\lim}\ H_n). 
	$$
	$T_\fp(\phi)$ is a free $A_\fp$-module of rank $\leq d^2$; cf. \cite{Anderson}, \cite{HartlIsog}. 
	%Moreover, if $\fp\neq \chr_A(L)$, from Lemma \ref{lem2.10} one deduces an isomorphism $T_\fp(\phi)\cong O_D\otimes_A A_\fp$ of left $O_D$-modules. Denote $V_\fp(\phi)=T_\fp(\phi)\otimes_{A_\fp}F_\fp$. 
	
	It is enough to show that $T_\fp(\phi)=0$. 
	Let $V_\fp(\phi)=T_\fp(\phi)\otimes_{A_\fp} F_\fp$. 
	The division algebra $D_\fp$ acts on $V_\fp(\phi)$. If $V_\fp(\phi)\neq 0$, then for $0\neq v\in V_\fp(\phi)$  
	the $F_\fp$-vector subspace $D_\fp v$ has to have dimension $d^2$, as $xv=0$ implies $x^{-1}(xv)=1v=v=0$, contrary to the assumption. 
	On the other hand, for any $a\in \fp$, since $\partial \phi_a=0$, we have $\phi_a\in M_d(L[\tau])\tau$. 
	Hence $\phi[\fp]$ is not a reduced scheme, which forces $\dim V_\fp(\phi) < d^2$. 
\end{proof}

The number of isomorphism classes of supersingular Drinfeld-Stuhler $O_D$-module over $\overline{\F}_\fp$
is not finite, unlike the case when $\fp\not \in \Ram(D)$. To get finiteness results, and also for certain problems involving 
moduli spaces, one imposes further restrictions on Drinfeld-Stuhler modules in terms of the action of 
$O_D$ on the tangent space. %$\Lie\left(\gm_{a, {\overline{\F}_\fp}}^d\right)$. 
For the rest of this subsection we assume that $\inv_\fp(D)=1/d$. 

Let $\phi$ be a Drinfeld-Stuhler $O_D$-module over $k:=\overline{\F}_\fp$. 
Note that $\partial_\phi: O_D\to M_d(k)$ factors through $O_D/\fp$. 
Denote by $\F_\fp^{(d)}$ the degree $d$ extension of $\F_\fp$, and $|\fp|:=\#\F_\fp^{(d)}$. 
It is easy to deduce from \cite[$\S$14]{Reiner} or \cite[Appendix A.2]{LaumonCDV} that 
\begin{align*}
O_D/\fp \cong & \F_\fp^{(d)}\oplus \F_\fp^{(d)}\Pi\oplus \cdots\oplus \F_\fp^{(d)}\Pi^{d-1},\\
& \Pi^d=0, \quad \Pi\alpha=\alpha^{|\fp|}\Pi. 
\end{align*}
Note that $\Pi$ generates a two-sided ideal of $O_D/\fp$, so, in particular, $O_D/\fp \not \cong M_d(\F_\fp)$ as was mentioned in Remark \ref{remNotMd}.
Fix an embedding $\F_\fp^{(d)}\hookrightarrow k$. Via $\partial_\phi$, the submodule $\F_\fp^{(d)}$ of $O_D/\fp$ 
acts on a $d$-dimensional $k$-vector space $V$. 
%Let $\sigma_1, \dots, \sigma_d$ be the $d$ embeddings $\F_\fp^{(d)}\hookrightarrow k$.  
It is easy to see that $V$ canonically decomposes into a direct sum 
$V=V_1\oplus \cdots \oplus V_d$, where 
$$
V_i:=\left\{v\in V\mid \partial_\phi(\alpha) v = \alpha^{|\fp|^i} v\text{ for all }\alpha\in  \F_\fp^{(d)}\right \}, \quad 0\leq i\leq d-1. 
$$
The following definition is motivated by \cite{RibetBM}, \cite{DrinfeldSym}, and \cite{Hausberger}. 
\begin{defn}
	The \textit{type} of $\phi$ is the ordered $d$-tuple $\mathbf{t}(\phi)=(\dim_k(V_0), \dots, \dim_k(V_{d-1}))$ of non-negative integers. 
	We say that 
	\begin{itemize}
		\item $\phi$ is \textit{special} if $\mathbf{t}(\phi)=(1,1,\dots, 1)$.   
		\item $\phi$ is \textit{exceptional} if $\partial_\phi(\Pi)=0$. 
		%\item $\phi$ is \textit{superspecial} if it is exceptional and special. 
	\end{itemize}

\end{defn}

\begin{example}\label{exampleExceptional}
	Let $A=\F_q[T]$, $d=2$, and $\fr(D)=T(T-1)$. Let $D$ and $O_D$ be as in Example \ref{example12}. 
	Thus, $D$ be the unique quaternion algebra over $F$ ramified at $T$ and $T-1$ only, and 
	$$
	O_D=\F_{q^2}[T, z]/(z^2-T(T-1)),
	$$
	$$
	zT=Tz, \quad T\alpha=\alpha T,\quad z\alpha=\alpha^q z \text{ for }\alpha\in \F_{q^2}. 
	$$
	Let $\fp=T$. Then $O_D/\fp=\F_{q^2}[z]/z^2$, with $z\alpha=\alpha^q z$ for $\alpha\in \F_\fp^{(2)}\cong \F_{q^2}$. Hence 
	$z$ plays the role of $\Pi$. 
	
	Define $\phi: O_D\to M_2(k[\tau])$ by 
	$$
	\phi_T=\begin{pmatrix} \tau^2 & 0 \\ 0 & \tau^2\end{pmatrix}, \qquad \phi_z=\begin{pmatrix} 0 & 1 \\ \tau^2(\tau^2-1) & 0\end{pmatrix}, 
	\qquad \phi_\alpha=\begin{pmatrix} \alpha & 0 \\ 0 & \alpha^q\end{pmatrix}, \alpha\in \F_{q^2}.
	$$
	It is easy to check that $\phi$ is a Drinfeld-Stuhler $O_D$-module which is special but not exceptional.%, as $\partial_\phi(\phi_z)=\begin{pmatrix} 0 & 1 \\ 0 & 0\end{pmatrix}\neq 0$. 
	
	Now define $\phi: O_D\to M_2(k[\tau])$ by
	$$
	\phi_T=\begin{pmatrix} \tau^2 & 0 \\ 0 & \tau^2\end{pmatrix}, \qquad \phi_z=\begin{pmatrix} 0 & \tau \\  \tau(\tau^2-1) & 0\end{pmatrix}, 
	\qquad \phi_\alpha=\begin{pmatrix} \alpha & 0 \\ 0 & \alpha\end{pmatrix}, \alpha\in \F_{q^2}. 
	$$
	This module is exceptional but not special. Its type is $(2,0)$. 
	
	Finally, define $\phi: O_D\to M_2(k[\tau])$ by
	$$
	\phi_T=\begin{pmatrix} \tau^2 & 0 \\ 0 & \tau^2\end{pmatrix}, \qquad \phi_z=\begin{pmatrix} 0 & \tau^2 \\  \tau^2-1 & 0\end{pmatrix}, 
	\qquad \phi_\alpha=\begin{pmatrix} \alpha & 0 \\ 0 & \alpha^q\end{pmatrix}, \alpha\in \F_{q^2}. 
	$$
	This module is exceptional and special (such modules are called \textit{superspecial}). 
	
	We remark that, for $d=2$, if $\phi$ is not exceptional then it is special. Indeed, 
	the commutation relation $\Pi\alpha=\alpha^{|\fp|}\Pi$ implies that $\partial_\phi(\Pi)V_0\subset V_1$ and vice versa. 
	Hence, if $\partial_\phi(\Pi)\neq 0$, then $V_0\neq V$ and $V_1\neq V$, so both must be $1$-dimensional. 
\end{example}

Special Drinfeld-Stuhler modules, or rather their $\sD$-elliptic sheaf counterparts, play an important role in the 
construction of moduli schemes of $\sD$-elliptic sheaves over $\Spec(A_\fp)$ with nice geometric properties; see \cite{Hausberger}. 
On the other hand, endomorphism rings of exceptional  Drinfeld-Stuhler modules have properties similar to 
the properties of supersingular modules in Theorem \ref{thmCh5Endss}: 

\begin{thm}
	Let $\phi$ be an exceptional Drinfeld-Stuhler $O_D$-module over $k$ of type $\mathbf{t}$. Then 
	$\End(\phi)$ is a hereditary $A$-order in the central division algebra $\overline{D}$ with invariants 
	$$
	\inv_v(\overline{D}) = 
	\begin{cases}
	- 1/d & \text{if } v=\infty, \\ 
	0 & \text{if } v=\fp, \\
	- \inv_v(D) & \text{if } v\neq \fp, \infty. 
	\end{cases}
	$$
	This order is maximal at every finite place $v\neq \fp$, and at $\fp$ it is isomorphic to a hereditary order 
	uniquely determined by $\mathbf{t}$. Moreover, the number of isomorphism classes of 
	exceptional Drinfeld-Stuhler $O_D$-module over $k$ of type $\mathbf{t}$ is equal to the class number of $\End(\phi)$. 
\end{thm}
\begin{proof}
	See \cite{PapikianMZ}, where the reader  will also find the precise relationship between the type $\mathbf{t}$ and the hereditary order at $\fp$. 
	For example, $\End(\phi)$ is maximal at $\fp$ if and only if, up to a cyclic permutation, $\mathbf{t}=(d, 0,\dots, 0)$. 
\end{proof}

\begin{example}\label{exampleEndExceptional} Let $\phi$ be the exceptional Drinfeld-Stuhler module over $k$ of type $(2, 0)$ from Example \ref{exampleExceptional}: 
	$$
	\phi_T=\begin{pmatrix} \tau^2 & 0 \\ 0 & \tau^2\end{pmatrix}, \qquad \phi_z=\begin{pmatrix} 0 & \tau \\  \tau(\tau^2-1) & 0\end{pmatrix}, 
	\qquad \phi_\alpha=\begin{pmatrix} \alpha & 0 \\ 0 & \alpha\end{pmatrix}, \alpha\in \F_{q^2}. 
	$$
	Fix a generator $\beta$ of $\F_T^{(2)}\cong \F_{q^2}$ over $\F_q$,  and let 
	$$
	h:=\begin{pmatrix} \beta & 0 \\ 0 & \beta^q\end{pmatrix}, \qquad \kappa:=\phi_z\tau=\tau\phi_z= \begin{pmatrix} 0 & \tau^2 \\  \tau^2(\tau^2-1) & 0\end{pmatrix}. 
	$$
	By a straightforward calculation one checks that $E:=\F_q[\phi_T, h, \kappa]\subseteq \End(\phi)$ and 
	$$
	\kappa h= h^q\kappa, \qquad \kappa^2=\phi_{T^2(T-1)}. 
	$$
	It is easy to see from this that $\overline{D}:=E\otimes_A F$ is the quaternion algebra over $F$ ramified at $(T-1)$ and $\infty$. 
	Moreover, as in Example \ref{example23}, one can check that $\End(\phi)$ is a maximal $A$-order in $\overline{D}$. 
\end{example}

% ------------------------------------------------------------------------ 

\section{Fields of moduli}\label{sFM} 

The main results of this section are about the fields of moduli of Drinfeld-Stuhler modules. 
As an auxiliary tool, we will need a Hilbert's 90-th type theorem for $\GL_d(L^\sep[\tau])$, which we prove first.  
Throughout the section, $L$ is an arbitrary $A$-field with $\chr_A(L)\nmid \fr(D)$.  

%We will need a Hilbert's 90-th type theorem for $\GL_d(L^\sep[\tau])$. 
%This is probably known to specialists, but in absence of a convenient reference we will prove this fact 
%following the argument of the corresponding statement for simple algebras; cf. \cite[$\S$III.8.7]{Berhuy}.  

\begin{lem}\label{lem6.1} We have: 
\begin{enumerate}
\item Every left ideal of $L[\tau]$ is principal. 
\item Every finitely generated torsion-free left $L[\tau]$-module is free. 
\end{enumerate}
\end{lem}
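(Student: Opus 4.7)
The plan is to treat $L[\tau]$ as a left Euclidean domain with $\tau$-degree, and then follow the standard PID-style structure theory, being careful about the one-sided conventions.

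For (1), I would first establish a left division algorithm: given $f = \sum_{i=0}^n a_i \tau^i$ and $g = \sum_{j=0}^m b_j \tau^j$ with $a_n, b_m \neq 0$ and $n \geq m$, the commutation rule $\tau b = b^q \tau$ yields
$$
\bigl(a_n\, b_m^{-q^{n-m}}\bigr)\,\tau^{n-m} \cdot g \;=\; a_n \tau^n + (\text{terms of $\tau$-degree} < n),
$$
so subtracting this from $f$ strictly decreases the $\tau$-degree. Iterating produces $q,r \in L[\tau]$ with $f = qg + r$ and $\deg_\tau r < \deg_\tau g$ (or $r=0$). The classical argument then shows that any non-zero left ideal $I$ is generated by any element of minimal $\tau$-degree in $I$.

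For (2), my plan has three steps. First, since $L[\tau]$ is a left Noetherian domain (being a left PID by (1)), it satisfies the left Ore condition and admits a classical left skew-field of fractions $L(\tau)$. For any finitely generated torsion-free left $L[\tau]$-module $M$, the localization map $M \to L(\tau) \otimes_{L[\tau]} M$ is injective, and the target is a finite-dimensional $L(\tau)$-vector space, say of dimension $r$; this embeds $M$ into $L(\tau)^r$.

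Second, I would prove by induction on $n$ that every submodule $N$ of $L[\tau]^n$ is free of rank $\leq n$. The case $n = 1$ is part (1). For the inductive step, let $\pi: L[\tau]^n \to L[\tau]$ be projection onto the last coordinate; then $\pi(N) = L[\tau] f$ is principal, $N \cap \ker(\pi) \subseteq L[\tau]^{n-1}$ is free of rank $\leq n-1$ by induction, and if $f \neq 0$, picking any lift $x \in N$ of $f$ yields an internal decomposition $N = (N \cap \ker \pi) \oplus L[\tau]\cdot x$ (directness uses that $L[\tau]$ is a domain, so $gx = 0$ forces $gf = \pi(gx) = 0$ and hence $g = 0$).

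Finally, fixing generators $m_1, \dots, m_k$ of the image of $M$ in $L(\tau)^r$, I would iterate the left Ore condition on their denominators to produce a single $0 \neq d \in L[\tau]$ with $dM \subseteq L[\tau]^r$. Then $dM$ is free by the previous step, and the map $m \mapsto dm$ is an $L[\tau]$-module isomorphism $M \xrightarrow{\sim} dM$ (injective since $L[\tau]$ is a domain and $d \neq 0$), so $M$ itself is free. The only points needing care, rather than serious obstacles, are the non-commutative subtleties: the $q$-th power twist appearing in the leading-term cancellation of the division algorithm, and the need for a \emph{common left} denominator, which is why the left Ore property must be set up explicitly; both are routine once the correct one-sided statements are used.
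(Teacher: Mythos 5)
Your proof of part (1) is correct and is the standard argument (the paper itself only cites Goss, Cor.~1.6.3, for this): the twisted leading coefficient $a_n b_m^{-q^{n-m}}$ is the right one, and since the quotient sits on the \emph{left} of $g$, an element of minimal $\tau$-degree generates any left ideal. In part (2), the identification of the kernel of $M\to L(\tau)\otimes_{L[\tau]}M$ with the torsion submodule, and the induction showing that every submodule of $L[\tau]^n$ is free of rank $\leq n$, are both correct; note that this last statement is in fact the only form of (2) that the paper ever uses (in the proof of Lemma~\ref{lem6.3}).

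The final denominator-clearing step, however, has a genuine gap. First, for a \emph{left} module over the noncommutative ring $L[\tau]$, the map $m\mapsto dm$ is not $L[\tau]$-linear: $d(gm)=(dg)m$ while $g(dm)=(gd)m$, and $dg\neq gd$ in general. Second, $dM\subseteq L[\tau]^r$ does not follow from $dm_i\in L[\tau]^r$: a general element of $M$ is $\sum_i g_im_i$, and $d\sum_i g_im_i=\sum_i (dg_i)m_i$ involves $dg_i$ rather than $g_id$, so there is no reason for it to be integral. The natural repair is \emph{right} multiplication by a common denominator, which is left-linear, but finding $c\neq 0$ with $m_ic\in L[\tau]^r$ amounts to the \emph{right} Ore condition, and this fails for $L[\tau]$ when $L$ is imperfect: for $L=\F_q(t)$ one checks that $\tau L[\tau]\cap (1+t\tau)L[\tau]=0$. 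The obstruction is real, not cosmetic: over $L=\F_q(t)$ the left module $L[\tau]+L[\tau]\cdot\tau^{-1}(1+t\tau)\subseteq L(\tau)$ is finitely generated, torsion-free of rank one, and not free, because $\tau^{-1}(1+t\tau)c\in L[\tau]$ forces $tc_n^q\in L^q$ for the leading coefficient $c_n$ of $c$, hence $c=0$. So the passage from ``finitely generated torsion-free'' to ``embeds in a finitely generated free module'' cannot be carried out by clearing denominators (and indeed seems to require extra hypotheses such as $L$ perfect); your induction correctly proves the submodule-of-a-free-module case, which is all the paper needs, but the general statement of (2) is exactly where your argument breaks down.
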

\begin{proof}
(1) follows from the existence of the right division algorithm for $L[\tau]$ (see \cite[Cor. 1.6.3]{Goss}), and (2) essentially 
follows from the same fact (see \cite[Cor. 5.4.9]{Goss}). 
\end{proof}

Let $K$ be a finite Galois extension of $L$ of degree $n$. Let $\sigma_1, \sigma_2, \dots, \sigma_n$ be the 
elements of $G:=\Gal(K/L)$. %; we will assume that $\sigma_1$ is the identity. 
The Galois group $G$ 
acts on $K[\tau]$ via the obvious action on the coefficients of polynomials, and it acts on the ring 
$M_d(K[\tau])$ by acting on the entries of matrices. 
%Let $\GL_d(K[\tau])$ be the multiplicative group of 
%invertible elements in $M_d(K[\tau])$. In particular, for any $S\in \GL_d(K[\tau])$ there is $S'\in \GL_d(K[\tau])$ 
%such that $SS'=S'S=\diag(1,1,\dots, 1)$. 
Let $M:=K[\tau]^d$ be the free left $K[\tau]$-module of rank $d$. 
Then $\GL_d(K[\tau])$ can be identified with the group $\Aut_{K[\tau]}(M)$ of automorphism of $M$, where 
$g\in \GL_d(K[\tau])$ acts on $M$ from the right as on row vectors. (Of course, $\GL_d(K[\tau])$ also 
acts on $M$ from the left as on column vectors, but that action is not $K[\tau]$-linear.) From this identification 
it is easy to see the validity of the following:

\begin{lem}\label{lemLinInd}
If $v_1, \dots, v_d \in M$ form a left $K[\tau]$-basis of $M$, then the matrix $S$ whose rows are $v_1, \dots, v_d$ is in $\GL_d(K[\tau])$. 
Conversely, the rows of $S\in \GL_d(K[\tau])$ form a left $K[\tau]$-basis of $M$. 
\end{lem}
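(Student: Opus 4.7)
The plan is to unpack the identification of $\GL_d(K[\tau])$ with $\Aut_{K[\tau]}(M)$ that is spelled out in the paragraph immediately preceding the lemma. Under this identification, a matrix $S \in M_d(K[\tau])$ corresponds to the $K[\tau]$-linear endomorphism of $M$ given by right multiplication on row vectors $x \mapsto x \cdot S$. Writing $e_1, \dots, e_d$ for the standard $K[\tau]$-basis of $M$ (so $e_i$ has $1$ in position $i$ and $0$ elsewhere), one checks directly that $e_i \cdot S = v_i$, where $v_1, \dots, v_d$ are the rows of $S$. The (non-commutative) associativity check $(\lambda x)\cdot S = \lambda (x \cdot S)$ confirms that this right action is indeed left $K[\tau]$-linear, so the identification $M_d(K[\tau]) = \End_{K[\tau]}(M)$ is as stated.

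For the forward direction, suppose $v_1, \dots, v_d$ form a left $K[\tau]$-basis of $M$. There is a unique $K[\tau]$-linear map $f \colon M \to M$ with $f(e_i) = v_i$, and $f$ is an isomorphism because it sends one $K[\tau]$-basis to another. Under the identification above, $f$ is right multiplication by the matrix $S$ whose rows are $v_1, \dots, v_d$, so $S \in \GL_d(K[\tau])$.

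For the converse, if $S \in \GL_d(K[\tau])$, then right multiplication by $S$ is an automorphism of $M$, hence carries the basis $e_1, \dots, e_d$ to another basis $e_1 \cdot S, \dots, e_d \cdot S = v_1, \dots, v_d$, which are the rows of $S$.

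The lemma is essentially a translation between matrices and module maps; there is no real obstacle. The only point requiring a moment of care is making sure the conventions (row vectors, right action, left $K[\tau]$-module structure) line up coherently over the non-commutative ring $K[\tau]$, which is handled by the associativity check above. Well-definedness of the notion of ``basis of rank $d$'' for $M$ is guaranteed by Lemma \ref{lem6.1}(2).
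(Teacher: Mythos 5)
Your proof is correct and follows exactly the route the paper intends: the paper gives no separate proof, merely asserting the lemma follows from the identification of $\GL_d(K[\tau])$ with $\Aut_{K[\tau]}(M)$ via the right action on row vectors, and your write-up is precisely the unpacking of that identification (rows of $S$ are the images of the standard basis, invertibility of $S$ corresponds to bijectivity of the map). The appeal to Lemma \ref{lem6.1}(2) at the end is unnecessary but harmless.
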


\begin{rem} %One must be careful when relying on intuition from linear algebra. 
There are matrices in $M_d(K[\tau])$ whose rows are left linearly independent but whose 
columns are left linearly dependent over $K[\tau]$, e.g., 
$\begin{pmatrix} 1 & \tau \\ 
\alpha+\tau & \tau(\alpha+\tau)
\end{pmatrix}$ where $\alpha\in K$ is such that $\alpha^q\neq \alpha$. 
%Hence one must be careful when relying on intuition from linear algebra. 
%Elements in $M$ which are left linearly independent over $K[\tau]$, can be 
%right linearly dependent over $K[\tau]$. For example, let $\alpha\in K$ be such that $\alpha^q\neq \alpha$, and 
%consider $m_1=(1, \alpha+\tau)$ and $m_2=(\tau, (\alpha+\tau)\tau)$ in $K[\tau]^2$. 
\end{rem}

\begin{lem}\label{lem6.2} The inclusion $L[\tau]\subset K[\tau]$ 
makes $K[\tau]$ into a left $L[\tau]$-module. As such, 
$K[\tau]$ is a free left $L[\tau]$-module of rank $n$. 
\end{lem}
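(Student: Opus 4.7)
The plan is to exhibit an explicit left $L[\tau]$-basis of size $n$. I will take any $L$-basis $\alpha_1,\dots,\alpha_n$ of $K$ and show it serves, viewed in $K[\tau]$ via $K\subset K[\tau]$, as a left $L[\tau]$-basis of $K[\tau]$. Writing the candidate expansion and using $\tau^i\alpha_j=\alpha_j^{q^i}\tau^i$, one gets
\[
\sum_{j=1}^{n} f_j(\tau)\,\alpha_j \;=\; \sum_{i\ge 0}\Big(\sum_{j=1}^{n} c_{ij}\,\alpha_j^{q^i}\Big)\tau^i,\qquad f_j(\tau)=\sum_i c_{ij}\tau^i\in L[\tau].
\]
Matching the coefficient of $\tau^i$, both the spanning statement and the linear-independence statement reduce to a single assertion: for each $i\ge 0$, the set $\{\alpha_1^{q^i},\dots,\alpha_n^{q^i}\}$ is itself an $L$-basis of $K$.

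Proving this assertion is the main obstacle, and is where I expect all the real work. When $L$ is imperfect, $K^{q^i}\subsetneq K$ is a proper subfield, so the $\alpha_j^{q^i}$ only live in that subfield; what makes their $L$-span fill $K$ is separability of $K/L$, which holds since $K/L$ is Galois. I will use MacLane's criterion, namely that $K/L$ is separable if and only if $K$ and $L^{1/q^i}$ are linearly disjoint over $L$. Given a dependence $\sum_j\ell_j\alpha_j^{q^i}=0$ with $\ell_j\in L$, extract $q^i$-th roots in a perfect closure of $L$:
\[
0 \;=\; \sum_j \ell_j\alpha_j^{q^i} \;=\; \Big(\sum_j \ell_j^{1/q^i}\alpha_j\Big)^{q^i},
\]
so $\sum_j\ell_j^{1/q^i}\alpha_j=0$ in the compositum $K\cdot L^{1/q^i}$. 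Linear disjointness forces every $\ell_j^{1/q^i}$, hence every $\ell_j$, to vanish, proving linear independence; a dimension count upgrades this to the basis claim.

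With the assertion in hand, spanning and linear independence of $\{\alpha_j\}$ over $L[\tau]$ follow directly from the displayed identity. Alternatively, one can close the argument via Lemma \ref{lem6.1}(2): the assertion first shows $K[\tau]$ is generated over $L[\tau]$ by $\alpha_1,\dots,\alpha_n$, hence finitely generated; torsion-freeness comes for free since $K[\tau]$ is a domain containing $L[\tau]$; so Lemma \ref{lem6.1}(2) yields freeness. The rank is identified as $n$ by tensoring with $L=L[\tau]/L[\tau]\tau$, which gives $K[\tau]\otimes_{L[\tau]}L\cong K[\tau]/K[\tau]\tau\cong K$, an $L$-module of dimension $n$.
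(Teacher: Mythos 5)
Your proof is correct, and its skeleton coincides with the paper's: both reduce the lemma to the single assertion that for every $i\geq 0$ the Frobenius twists $\alpha_1^{q^i},\dots,\alpha_n^{q^i}$ of an $L$-basis of $K$ again form an $L$-basis, after which spanning (and independence) over $L[\tau]$ follow by matching coefficients of $\tau^i$. The difference lies in how that key sub-claim is established. The paper invokes Dedekind's theorem on independence of characters: $\{\beta_j\}$ is an $L$-basis of the Galois extension $K$ iff $\det(\sigma_i\beta_j)\neq 0$, and since $\det(\sigma_i\alpha_j^{q^r})=\det(\sigma_i\alpha_j)^{q^r}$, nonvanishing is preserved under the Frobenius twist. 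You instead argue via MacLane's criterion: separability of $K/L$ gives linear disjointness of $K$ from $L^{1/q^i}$, and extracting $q^i$-th roots of a putative dependence relation transports it to a dependence of the $\alpha_j$ over $L^{1/q^i}$, which is forbidden. Both arguments ultimately rest on separability (the determinant criterion needs the $n$ distinct embeddings), and you are right to flag imperfectness of $L$ as the genuine danger point; your route is perhaps more transparent about \emph{why} the statement could fail for an inseparable extension. You are also somewhat more complete than the paper, which only verifies spanning explicitly and leaves independence and the rank count implicit. One small caveat on your alternative closing: $K[\tau]\otimes_{L[\tau]}L[\tau]/L[\tau]\tau$ uses the \emph{right} $L[\tau]$-module structure on $K[\tau]$, whereas the freeness being measured is of the left module; to read off the rank you should tensor on the other side, with $L[\tau]/\tau L[\tau]$ as a right module, giving $K[\tau]/\tau K[\tau]\cong K$. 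This is cosmetic, since your primary (direct basis) argument already yields both freeness and the rank.
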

\begin{proof}
It is obvious that $K[\tau]$ has no torsion elements for the action of $L[\tau]$. Let $\alpha_1, \dots, \alpha_n\in K$ 
be an $L$-basis of $K$. It is enough to show that $K[\tau]=\sum_{i=1}^n L[\tau]\alpha_i$. 
By the Dedekind's theorem on the independence of characters, $\{\alpha_1, \dots, \alpha_n\}$ form an $L$-basis of $K$ 
if and only if $\det(\sigma_i\alpha_j)\neq 0$. On the other hand, $\det(\sigma_i\alpha_j)\neq 0$ 
if and only if $\det(\sigma_i\alpha_j)^{q^r}=\det(\sigma_i\alpha_j^{q^r})\neq 0$ for any $r\geq 0$. Hence 
$\{\alpha_1^{q^r}, \dots, \alpha_n^{q^r}\}$ is also an $L$-basis of $K$. 
Let $f=a_0+a_1\tau+\cdots + a_k \tau^k\in K[\tau]$. 
For each $a_i$ we can find $b_{i,1}, \dots, b_{i, n}\in L$ such that $\sum_{j=1}^n b_{i, j} \alpha_j^{q^i}=a_i$. 
Thus, $f=\sum_{j=1}^n g_j \alpha_j$, where $g_j:=\sum_{i=1}^k b_{i, j}\tau^i\in L[\tau]$.  
\end{proof}
 
\begin{defn}
 We say that $G$ acts on $M$ by \textit{semi-linear automorphisms} (cf. \cite[p. 110]{Berhuy}), 
 $G\times M\to M$, $(\sigma, m)\mapsto \sigma\ast m$, if for all $m, m'\in M$, $\sigma\in G$,  
 and $\la\in K[\tau]$ we have 
 \begin{enumerate}
 \item[(i)] $\sigma\ast (m+m')=\sigma\ast m + \sigma\ast m'$,
 \item[(ii)] $\sigma\ast (\la m)=\sigma\la\cdot \sigma\ast m$,
 \end{enumerate}
 where $\sigma\la$ denotes the usual action of $G$ on $K[\tau]$, and the dot denotes the action of $K[\tau]$ on $M$. Let 
 $$
 M^G:=\{m\in M\mid \sigma\ast m=m\textit{ for all }\sigma\in G\}. 
 $$
 It is easy to see that $M^G$ is a left $L[\tau]$-module. 
 \end{defn}
 
\begin{lem}\label{lem6.3} The left $L[\tau]$-module $M^G$ is free of rank $d$, i.e., $M^G\cong L[\tau]^d$. 
Moreover, the map $K\otimes_L M^G\to M$, $\alpha\otimes m\mapsto \alpha m$, 
is an isomorphism. 
\end{lem}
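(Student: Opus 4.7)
The plan is to mimic the classical Galois descent argument while carefully tracking the non-commutativity of $K[\tau]$. First I would verify that $M^G$ is in fact a left $L[\tau]$-submodule of $M$: since $G$ fixes $L[\tau]\subset K[\tau]$ elementwise, the semi-linearity condition gives $\sigma\ast(fm)=f\cdot(\sigma\ast m)=fm$ for $f\in L[\tau]$ and $m\in M^G$. Furthermore $M^G$ is torsion-free as a submodule of $M=K[\tau]^d$. By Lemma \ref{lem6.2}, $M$ is free of rank $nd$ over $L[\tau]$, and Lemma \ref{lem6.1}(1) implies $L[\tau]$ is a left principal ideal domain, in particular left Noetherian, so $M^G$ is finitely generated. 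Lemma \ref{lem6.1}(2) then gives $M^G\cong L[\tau]^r$ for some $r\geq 0$.

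The key descent step is the claim that any $L$-linearly independent subset $m_1,\dots,m_s$ of $M^G$ is $K$-linearly independent in $M$. I would prove this by the classical minimal-counterexample trick. Suppose $\sum\alpha_im_i=0$ is a $K$-dependence with a minimal number of nonzero coefficients; after normalizing $\alpha_1=1$, apply $\sigma\ast$ and use $m_i\in M^G$ to obtain $\sum\sigma(\alpha_i)m_i=0$; subtracting produces a strictly shorter relation, which by minimality must be trivial, so every $\alpha_i$ is $G$-fixed and hence lies in $L$, contradicting $L$-independence. Choosing an $L$-basis of $M^G$ and applying the claim shows the natural map $\phi:K\otimes_L M^G\to M$, $\alpha\otimes m\mapsto \alpha m$, is injective.

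For surjectivity I would use a Dedekind-style averaging construction. For $m\in M$ and $\beta\in K$, set $S_\beta(m):=\sum_\sigma\sigma(\beta)\,(\sigma\ast m)$; a direct computation using the cocycle identity for $\ast$ shows $S_\beta(m)\in M^G$. Letting $\beta$ range over an $L$-basis $\beta_1,\dots,\beta_n$ of $K$, Dedekind's theorem on independence of characters makes the matrix $(\sigma_i(\beta_j))$ invertible over $K$, which lets us express every $\sigma\ast m$, and in particular $m$ itself, as a $K$-linear combination of the $S_{\beta_j}(m)\in M^G$. Hence $M=K\cdot M^G$, giving surjectivity of $\phi$. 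Comparing $L[\tau]$-ranks via Lemma \ref{lem6.2} (so that $K\otimes_L L[\tau]^r$ has $L[\tau]$-rank $nr$, while $M$ has $L[\tau]$-rank $nd$) forces $r=d$. The main subtlety is that $K$ and $\tau$ do not commute in $K[\tau]$, but since $G$ fixes $\tau$, the entire descent is controlled by the commutative $K/L$ part, so the classical machinery carries over with essentially no change.
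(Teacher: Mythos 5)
Your argument is correct and is essentially the paper's own proof: the paper likewise obtains freeness of $M^G$ from Lemma \ref{lem6.1} (submodules of finite free modules over the left principal ideal ring $L[\tau]$ are free), reduces the rank statement to the bijectivity of $K\otimes_L M^G\to M$, and establishes that bijectivity by ordinary Galois descent for vector spaces --- precisely the Dedekind averaging operator and the minimal-relation argument you spell out. One small caveat on your last step: the descent map is not $L[\tau]$-linear for the naive module structure on $K\otimes_L M^G$ (since $\tau$ does not commute with $K$), so rather than ``comparing $L[\tau]$-ranks'' you should conclude $r=d$ by noting that an $L[\tau]$-basis $v_1,\dots,v_r$ of $M^G$ generates $M$ over $K[\tau]$ (by surjectivity, as $K\cdot L[\tau]\subseteq K[\tau]$) and is left $K[\tau]$-independent (because $\{\tau^j v_i\}$ is $L$-independent in $M^G$, hence $K$-independent in $M$ by your injectivity claim).
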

\begin{proof} 
Since every left ideal of $L[\tau]$ is principal (Lemma \ref{lem6.1}), every submodule 
of a free left $L[\tau]$-module of finite rank is also free of finite rank (cf. \cite[(2.44)]{Reiner}). 
On the other hand, by Lemma \ref{lem6.2}, the left $L[\tau]$-module $M$ is free of finite rank. Hence 
the $L[\tau]$-submodule $M^G$ of $M$ is also free of finite rank. To show that the rank of $M^G$ 
over $L[\tau]$ is $d$, it is enough to show that the map $K\otimes_L M^G\to M$, $\alpha\otimes m\mapsto \alpha m$, 
is an isomorphism. This last isomorphism follows from the Galois descent for vector spaces; see \cite[Lem. III.8.21]{Berhuy}. 
\end{proof}

\begin{prop}\label{propHilb90}
%$H^1(G, \GL_d(K[\tau]))=1$. 
Let $c:G\to \GL_d(K[\tau])$, $\sigma\mapsto c_\sigma$, be a map which satisfies 
$c_{\sigma\delta}=\sigma(c_\delta) c_\sigma$ for all $\sigma, \delta\in G$. Then there 
is a matrix $S\in \GL_d(K[\tau])$ such that $c_\sigma=(\sigma S)^{-1}S$ for all $\sigma\in G$. 
\end{prop}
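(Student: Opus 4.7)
The plan is to adapt the standard Galois descent proof of Hilbert's 90-th theorem for central simple algebras to our non-commutative polynomial setting. The inputs are already in place: Lemma \ref{lem6.3} provides descent for free $L[\tau]$-modules, and Lemma \ref{lemLinInd} translates between bases of $M=K[\tau]^d$ and elements of $\GL_d(K[\tau])$.

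First I would introduce a twisted semi-linear action of $G$ on $M=K[\tau]^d$. Letting $G$ act on $M$ entrywise (call this $\sigma(m)$) and letting $\GL_d(K[\tau])$ act on $M$ by right multiplication on row vectors, I define
\[
\sigma\ast m := \sigma(m)\, c_\sigma, \qquad m\in M,\ \sigma\in G.
\]
The cocycle condition $c_{\sigma\delta}=\sigma(c_\delta)c_\sigma$ gives
\[
\sigma\ast(\delta\ast m)=\sigma\bigl(\delta(m)c_\delta\bigr)c_\sigma=(\sigma\delta)(m)\,\sigma(c_\delta)c_\sigma=(\sigma\delta)(m)\,c_{\sigma\delta}=(\sigma\delta)\ast m,
\]
so $\ast$ is a genuine action. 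Semi-linearity (conditions (i) and (ii)) is immediate since $\sigma(\lambda m)=\sigma(\lambda)\sigma(m)$ for $\lambda\in K[\tau]$, and right multiplication by $c_\sigma$ is additive and commutes with left scalar multiplication.

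Next, I would apply Lemma \ref{lem6.3} to $(M,\ast)$. It yields a free left $L[\tau]$-module
\[
M^{G,\ast}=\{m\in M\mid \sigma\ast m=m\text{ for all }\sigma\in G\}
\]
of rank $d$, together with the isomorphism $K\otimes_L M^{G,\ast}\xrightarrow{\sim} M$ sending $\alpha\otimes m$ to $\alpha m$. Choose an $L[\tau]$-basis $v_1,\dots,v_d$ of $M^{G,\ast}$; by the displayed isomorphism these are also a left $K[\tau]$-basis of $M$. Let $S\in M_d(K[\tau])$ be the matrix whose $i$-th row is $v_i$. By Lemma \ref{lemLinInd}, $S\in \GL_d(K[\tau])$.

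Finally, I would unwind the invariance $\sigma\ast v_i=v_i$ into the desired identity. Entrywise Galois action on rows of $S$ produces the rows of $\sigma(S)$, so the invariance condition reads $(\sigma S)_i\, c_\sigma=S_i$ for each $i$, i.e.
\[
(\sigma S)\,c_\sigma = S,
\]
which rearranges to $c_\sigma=(\sigma S)^{-1}S$ for every $\sigma\in G$. The only potentially delicate step is keeping the left/right conventions and the twist direction consistent; once these are fixed the argument reduces to Galois descent, and the non-commutativity of $K[\tau]$ plays no role beyond what is already handled in Lemmas \ref{lem6.1}--\ref{lem6.3}.
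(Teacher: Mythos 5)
Your proposal is correct and follows essentially the same route as the paper: define the twisted semi-linear action $\sigma\ast m=(\sigma m)c_\sigma$, apply the descent statement of Lemma \ref{lem6.3} to extract an $L[\tau]$-basis of the invariants that is simultaneously a $K[\tau]$-basis of $M$, assemble it into $S\in\GL_d(K[\tau])$ via Lemma \ref{lemLinInd}, and read off $S=(\sigma S)c_\sigma$. The verification of the action axiom and the handling of the left/right conventions match the paper's argument exactly.
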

\begin{proof} 
%Let $c\in Z^1(G, \GL_d(K[\tau]))$ be a $1$-cocycle. For any $\sigma, \delta\in G$ we have $c_{\sigma\delta}=\sigma(c_\delta) c_\sigma$.  
Define a (twisted) action of $G$ on $M$: 
$$
\sigma\ast m=(\sigma m)c_\sigma \quad \text{for all }m\in M, \sigma\in G. 
$$
One easily checks that $(\sigma\delta)\ast m=\sigma\ast (\delta\ast m)$ for all $\sigma,\delta \in G$ and $m\in M$, so 
this is indeed an action. Moreover, this action is semi-linear.   
Using Lemma \ref{lem6.3}, we can choose a basis $v_1, \dots, v_d$ of the left $L[\tau]$-module $M^G\cong L[\tau]^d$ 
such that $\sum_{i=1}^d K[\tau] v_i = M$. Since $M$ is a free left $K[\tau]$-module of rank $d$, 
the elements $v_1, \dots, v_d$ form a left $K[\tau]$-basis of $M$. Let $S$ be the matrix whose rows are $v_1, \dots, v_d$. 
By Lemma \ref{lemLinInd}, $S\in \GL_d(K[\tau])$. The relations 
$$
v_i=\sigma\ast v_i=(\sigma v_i) c_\sigma \text{ for all }i=1, \dots, d,
$$
are equivalent to the matrix equality $S=(\sigma S) c_\sigma$ for all $\sigma\in G$, and this implies the claim of the lemma. 
\end{proof}

\begin{comment}
Recall the map $\partial: M_d(K[\tau])\to M_d(K)$ from $\S$\ref{sBasics}: if we write $S\in M_d(K[\tau])$ as 
$S=\sum_{i=0}^n B_i \tau^i\in M_d(K[\tau])$, with $B_0,\dots, B_n\in M_d(K)$, then $\partial(S)=B_0$. 
It is clear that $\partial$ is a ring homomorphism. In particular, $\partial$ restricts to a surjective homomorphism 
$\partial: \GL_d(K[\tau])\to \GL_d(K)$. 
\end{comment}

\begin{lem}\label{lem6.7} Let $\phi$ be a Drinfeld-Stuhler $O_D$-module over $K$ with 
$\Aut_K(\phi)\cong \F_{q^r}^\times$ $($cf. Theorem \ref{thmEnd}$)$. 
Then 
$$
\partial: \Aut_K(\phi)\to \GL_d(K)
$$
gives an isomorphism from the group $\Aut_K(\phi)$ to the group $\cA:=\{\diag(\alpha, \dots, \alpha)\mid \alpha\in \F_{q^r}^\times\}$.  
\end{lem}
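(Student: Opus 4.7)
The plan is to combine a centralizer argument with the fact that $\Aut_K(\phi)$ is (the group of units of) a finite subfield of $\End_K(\phi)$.

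First, I would show $\partial(\End_K(\phi)) \subseteq K\cdot I$. Applying $\partial$ to $u\phi_b = \phi_b u$ gives $\partial(u)\partial_\phi(b) = \partial_\phi(b)\partial(u)$ for every $u \in \End_K(\phi)$ and every $b \in O_D$. By Lemma \ref{lemFD}, the $K$-subalgebra of $M_d(K)$ spanned by $\partial_\phi(O_D)$ is all of $M_d(K)$, so $\partial(u)$ commutes with every matrix in $M_d(K)$ and is therefore a scalar. In particular $\partial(\Aut_K(\phi)) \subseteq K^\times \cdot I$.

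Second, I would invoke the construction in the proof of Theorem \ref{thmEnd}(4): the isomorphism $\Aut_K(\phi)\cong \F_{q^r}^\times$ is realized by a generator $\alpha$ such that $\F_q(\alpha) \subset \End_K(\phi)$ is a subring isomorphic to the field $\F_{q^r}$. Restricting $\partial$ to this field yields a nonzero ring homomorphism $\F_q(\alpha) \to M_d(K)$ (it sends $1$ to $I$), which is automatically injective, since ring homomorphisms out of a field are. Combined with the first step, this gives an embedding of $\F_{q^r}\cong \F_q(\alpha)$ into the subring $K\cdot I \cong K$ of $M_d(K)$, pinning down the image of $\Aut_K(\phi)$ under $\partial$ as exactly $\F_{q^r}^\times\cdot I = \cA$, and injectivity on $\Aut_K(\phi)$ follows.

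I do not expect a real obstacle here: the argument is a short centralizer computation, with Lemma \ref{lemFD} doing the main work. The one point worth emphasizing is that the proof itself establishes $\F_{q^r}\subseteq K$, which is tacit in the statement of the lemma (since $\cA$ is presented as a subgroup of $\GL_d(K)$).
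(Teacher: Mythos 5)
Your proposal is correct, and its first half (the centralizer computation showing via Lemma \ref{lemFD} that $\partial(\End_K(\phi))$ consists of scalar matrices) is exactly the paper's opening move. Where you diverge is in how injectivity is established. The paper argues directly inside $M_d(K[\tau])$: if $\partial(u)=1$ but $u\neq 1$, write $u=1+B_m\tau^m+\cdots$ with $B_m\neq 0$ and $m$ minimal; then $u^{h}=1+hB_m\tau^m+\cdots$ with $h=q^r-1$, and since $u^{h}=1$ and $h$ is prime to the characteristic, $B_m=0$, a contradiction. Surjectivity onto $\cA$ then follows by counting, since $\Aut_K(\phi)$ and $\cA$ are finite groups of the same order. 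You instead import from the proof of Theorem \ref{thmEnd}(4) the fact that $\Aut_K(\phi)$ is the unit group of a finite subfield $\F_q(\alpha)\cong\F_{q^r}$ of $\End_K(\phi)$, so that $\partial$ restricted to it is a nonzero ring homomorphism out of a field and hence injective; identifying the image with the unique subfield of $K$ of order $q^r$ then yields surjectivity onto $\cA$ and, as a bonus, the containment $\F_{q^r}\subseteq K$ that the statement of the lemma tacitly presupposes. Your route is slightly less self-contained, since it relies on $\F_q(\alpha)$ actually being a field, which ultimately rests on $\End_K(\phi)$ having no zero divisors (Lemma \ref{lemComEnd} and the remark following it); the paper's computation avoids that dependency, needing only the group isomorphism $\Aut_K(\phi)\cong\F_{q^r}^\times$. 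Both arguments are sound.
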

\begin{proof}
By Lemma \ref{lemFD}, $\partial(\Aut_K(\phi))$ lies in the center of $\GL_d(K)$. Since the center of 
$\GL_d(K)$ consists of scalar matrices, and the $(q^r-1)$-th roots of $1$ in $K$ are the elements of $\F_{q^r}^\times$, 
the restriction of $\partial$ to $\Aut_K(\phi)$ is indeed a homomorphism into 
$\cA$. Since $\Aut_K(\phi)\cong \cA$, to prove that  
$\partial$ is an isomorphism it is enough to prove that it is injective. Let $h:=q^r-1$. Assume $\alpha\in \Aut_K(\phi)$ 
is such that $\partial(\alpha)=1$. Then we can write $\alpha=1+\sum_{i=1}^n B_i\tau^n$ for some $n\geq 1$. 
Suppose not all $B_i$ are zero, and let $m$ be the smallest index such that $B_m\neq 0$. Then 
$$
1=\alpha^h=1+hB_m\tau^m+\cdots, 
$$ 
which implies $hB_m=0$. Since $h$ is coprime to the characteristic of $K$, we must have $B_m=0$, which is a 
contradiction. 
\end{proof}

\begin{rem}
It is not generally true that the elements of $\Aut_K(\phi)$ are scalar matrices in $\GL_d(K[\tau])$. For example, 
suppose $d=2$, $\diag(\alpha, \alpha)\in \Aut_K(\phi)$, and $\alpha\not\in \F_q$. 
Let $S=\begin{pmatrix} 1 & \tau\\ 0 & 1\end{pmatrix}\in \GL_2(K[\tau])$. Then 
$\begin{pmatrix} \alpha & (\alpha^q-\alpha)\tau\\ 0 & \alpha\end{pmatrix}\in \Aut_K(\psi)$, where 
$\psi$ is the Drinfeld-Stuhler module $S\phi S^{-1}$.  
\end{rem}

\begin{comment}
\begin{notn}
Since $K[\tau]$ is a left principal ideal domain, it is a left Ore domain and has a unique skew field of fractions 
which we denote by $K(\tau)$; see \cite[$\S$1.3]{Cohn}. (The skew field $K(\tau)$ consists of expressions of the form 
$f/g$ with $g\neq 0$ and $f/g=f'/g'$ if and only if there exist nonzero $u,u'\in K[\tau]$ with $u f= u'f'$ and $u g= u'g'$.)  
Let $K(\tau)^\times$ be the group of non-zero elements of $K(\tau)$, and $K(\tau)^{\times \ab}:=K(\tau)^\times/[K(\tau)^\times, K(\tau)^\times]$ 
be its abelianization. There is a surjective homomorphism 
$$
\det: \GL_d(K(\tau))\To K(\tau)^{\times \ab},
$$ 
called the \textit{Dieudonn\'e determinant}, which coincides with the usual determinant when restricted to $\GL_d(K)$; see \cite[p. 135]{Draxl}. 
For example, $\det\begin{pmatrix} a & b \\ c & d\end{pmatrix}$ is the image of $ad-aca^{-1}b$ in $K(\tau)^{\times \ab}$ if $a\neq 0$, 
and the image of $-bc$ if $a=0$; see \cite[p. 133]{Draxl}.   
\end{notn}
\end{comment}
 
 \begin{comment}
 The absolute Galois group $G_L:=\Gal(L^\sep/L)$ acts continuously on $\GL_d(L^\sep[\tau])$, and if $H$ 
 is a normal open subgroup of $G_L$ then $\GL_d(L^\sep[\tau])^H=\GL_d(K[\tau])$, 
 where $K$ is the fixed field of $H$. The previous proposition implies that 
 \begin{equation}\label{eqHilb90}
 H^1(G_L, \GL_d(L^\sep[\tau])):=\underset{\To}{\lim}\  H^1(G_L/H, \GL_d(L^\sep[\tau])^H)=1, 
 \end{equation}
 where the limit is over the open normal subgroups of $G_L$; cf. \cite[p. 154]{SerreLF}. 
 \end{comment}
 
 \begin{defn}
 Let $\phi$ be a Drinfeld-Stuhler $O_D$-module over $L^\sep$. For $\sigma\in \Gal(L^\sep/L)$, let $\phi^\sigma$ 
 be the composition
 $$
 \phi^\sigma: O_D\overset{\phi}{\To} M_d(L^\sep[\tau])\overset{\sigma}{\To} M_d(L^\sep[\tau]). 
 $$
 It is easy to check that $\phi^\sigma$ is again a Drinfeld-Stuhler $O_D$-module. We say that $L$ 
 is a \textit{field of moduli for $\phi$} if for all $\sigma\in \Gal(L^\sep/L)$ the Drinfeld-Stuhler module 
 $\phi^\sigma$ is isomorphic to $\phi$.  
 \end{defn}
 
 If $L$ is a field of definition for $\phi$, then $L$ is obviously a field of moduli. 
 
 \begin{thm}\label{thmFMFD}
 Let $\phi$ be a Drinfeld-Stuhler $O_D$-module over $L^\sep$ with $\Aut(\phi)\cong \F_{q^r}^\times$. 
 Assume $L$ is a field of moduli for $\phi$. If $d$ and $q^r-1$ are coprime, then $L$ is a field of definition for $\phi$. 
 \end{thm}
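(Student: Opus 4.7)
The plan is to extract from the field-of-moduli hypothesis a 2-cocycle obstruction in $H^2(G_L,\Aut(\phi))$, kill its class via a determinant argument combined with the coprimality $\gcd(d,q^r-1)=1$, and then invoke Proposition \ref{propHilb90} to produce an explicit model of $\phi$ over $L$. Since $\phi$ is defined over some finite extension, I would work inside a finite Galois extension $K/L$; set $G:=\Gal(K/L)$. For each $\sigma\in G$ the hypothesis furnishes $c_\sigma\in\GL_d(K[\tau])$ with $c_\sigma\phi_b c_\sigma^{-1}=\phi_b^\sigma$ for all $b\in O_D$, unique up to right multiplication by $\Aut(\phi)\cong\F_{q^r}^\times$. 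A routine check shows that $a_{\sigma,\delta}:=c_{\sigma\delta}^{-1}\sigma(c_\delta)c_\sigma$ takes values in $\Aut(\phi)$ and defines a 2-cocycle. Should its class $[a]$ vanish, a standard modification $c_\sigma\mapsto c_\sigma\iota_\phi(\lambda_\sigma)^{-1}$ by a suitable 1-cochain $\lambda$ would produce a genuine 1-cocycle $c'_\sigma$; Proposition \ref{propHilb90} would then yield $S\in\GL_d(K[\tau])$ with $c'_\sigma=(\sigma S)^{-1}S$, and $S\phi_b S^{-1}$ would be $G$-invariant, hence would descend to a Drinfeld--Stuhler module over $L$ isomorphic to $\phi$ over $L^\sep$.

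To prove $[a]=0$, I would transport $a$ via the canonical isomorphism $\iota_\phi:\mu:=\F_{q^r}^\times\xrightarrow{\sim}\Aut(\phi)$ of Lemma \ref{lem6.7}, obtaining a cocycle $\bar a\in Z^2(G_L,\mu)$, where $\mu\subset L^{\sep,\times}$ carries the natural Galois action. The Kummer sequence $1\to\mu\to L^{\sep,\times}\xrightarrow{(\cdot)^{q^r-1}}L^{\sep,\times}\to 1$ (available since $q^r-1$ is coprime to $\chr(L)$) together with Hilbert 90 yields an injection $H^2(G_L,\mu)\hookrightarrow\Br(L)$, so it suffices to show $[\bar a]=0$ in $\Br(L)$. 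For this, I would apply $\det\circ\partial$ to the identity $c_{\sigma\delta}\,a_{\sigma,\delta}=\sigma(c_\delta)\,c_\sigma$: writing $u_\sigma:=\det(\partial(c_\sigma))\in L^{\sep,\times}$ and using that $\partial(a_{\sigma,\delta})=\bar a_{\sigma,\delta} I_d$ has determinant $\bar a_{\sigma,\delta}^d$, one obtains the coboundary identity $\bar a_{\sigma,\delta}^d=u_{\sigma\delta}^{-1}\sigma(u_\delta)u_\sigma$ in $L^{\sep,\times}$. Hence $d\cdot[\bar a]=0$ in $\Br(L)$, and since $[\bar a]$ is also annihilated by $q^r-1$, the hypothesis $\gcd(d,q^r-1)=1$ forces $[\bar a]=0$, whence $[a]=0$ as well.

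The main obstacle I anticipate is the technical bookkeeping around the Galois-equivariance of $\iota_\phi$. Because $\phi$ is not assumed defined over $L$, there is no honest $G$-action on $\Aut(\phi)$; however, conjugation by $c_\sigma$ produces a canonical isomorphism $\Aut(\phi)\to\Aut(\phi^\sigma)$, independent of the choice of $c_\sigma$ because $\Aut(\phi)$ is abelian. Together with Lemma \ref{lem6.7} this shows that $\iota_\phi$ intertwines the resulting ``twisted'' action on $\Aut(\phi)$ with the natural Galois action on $\mu$, because $\partial$ of a central scalar is unchanged by conjugation. Once this compatibility is in place, the passage $a\leftrightarrow \bar a$ and the modification step in the first paragraph become formal, and the coprimality hypothesis supplies the final input.
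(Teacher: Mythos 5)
Your proposal is correct and follows essentially the same route as the paper: extract the $\Aut(\phi)\cong\F_{q^r}^\times$-valued obstruction from the field-of-moduli hypothesis, trivialize it using $\det\circ\,\partial$ together with $\gcd(d,q^r-1)=1$, and finish by descending via Proposition \ref{propHilb90}. The only real difference is presentational: where you kill the obstruction class by noting it is annihilated by the coprime integers $d$ and $q^r-1$ inside $\Br(L)$ (via the Kummer sequence), the paper trivializes the same cocycle by hand --- Hilbert 90 for $K^\times$ yields $b$ with $\mu_\sigma^{q^r-1}=b/\sigma(b)$, one adjoins a $(q^r-1)$-th root of $b$, and the injectivity of $\det\circ\,\partial$ on $\Aut(\phi)$ (which is where the coprimality enters there) forces the adjusted cocycle to be trivial on the nose.
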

 \begin{proof} 
We can find a finite Galois extension $K$ of $L$ such that $\phi$ is defined over $K$ 
and all isomorphisms of $\phi$ to $\phi^\sigma$ for every $\sigma\in \Gal(K/L)$ are defined over $K$. 
(Take, for example, $K$ such that $\phi$ and $\phi[a]$ are defined over $K$, where $a\in A$ is coprime with $\chr_A(L)$ and $\fr(D)$.)
In particular, $\Aut_K(\phi)=\Aut(\phi)$. 
Denote $G=\Gal(K/L)$. For each $\sigma\in G$, choose an isomorphism $\la_\sigma: \phi\to \phi^\sigma$. 
Then 
\begin{equation}\label{eqCalc}
\la_{\sigma\delta} \phi\la_{\sigma\delta}^{-1}=\phi^{\sigma\delta}=(\phi^\delta)^\sigma=(\la_\delta \phi\la_\delta^{-1})^\sigma 
=\sigma(\la_\delta)\phi^\sigma \sigma(\la_\delta)^{-1}= \sigma(\la_\delta)\la_\sigma\phi \la_\sigma^{-1} (\sigma \la_\delta)^{-1}. 
\end{equation}
Hence 
\begin{equation}\label{eq-pre-cocycle}
\la_{\sigma\delta} = \sigma(\la_\delta)\la_\sigma \alpha_{\sigma, \delta}
\end{equation}
with $\alpha_{\sigma, \delta}\in \Aut(\phi)$. 

Let $\underline{\det}:\GL_d(K[\tau])\to K^\times$ be the composition 
 $$
\underline{\det}: \GL_d(K[\tau])\overset{\partial}{\To} \GL_d(K)\overset{\det}{\To} K^\times. 
 $$
The assumption that $d$ and $q^r-1$ are coprime, combined with Lemma \ref{lem6.7}, implies that 
$\underline{\det}: \Aut(\phi)\overset{\sim}{\To}\F_{q^r}^\times$ is an isomorphism. 
Denote $\mu_\sigma= \underline{\det}(\la_\sigma)$ and $h=q^r-1$. Then  
$\mu_{\sigma\delta} = \sigma(\mu_\delta)\mu_\sigma \underline{\det}(\alpha_{\sigma, \delta})$, and 
$
\mu_{\sigma\delta}^h = \sigma(\mu_\delta^h)\mu_\sigma^h$. 
Hence $G\to K^\times$, $\sigma\mapsto \mu_\sigma^h$, is a $1$-cocycle. By Hilbert's Theorem 90 
for $K^\times$, there is $b\in K^\times$ such that $\mu_{\sigma}^h=b/\sigma(b)$ for all $\sigma\in G$. 
Let $a$ be an element of $L^\sep$ such that $a^h=b$. The extension $K':=K(a)$ is Galois over $L$. 
Put $G^\ast=\Gal(K'/L)$, and let $\pi: G^\ast\to G$ be the natural homomorphism. For every $\sigma\in G^\ast$, 
we see that $\mu_{\pi(\sigma)}\sigma(a)/a$ is an $h$-th root of unity, hence there is a unique $\alpha_\sigma\in \Aut(\phi)$ 
such that $\mu_{\pi(\sigma)}\underline{\det}(\alpha_\sigma)=a/\sigma(a)$. 

Put $c_\sigma=\la_{\pi(\sigma)}\alpha_\sigma$. 
Then $c_\sigma: \phi\to \phi^\sigma$ is an isomorphism  and $\underline{\det}(c_\sigma)=a/\sigma(a)$. 
Repeating the calculation \eqref{eqCalc} for $c_\sigma$, we 
arrive at the relations $c_{\sigma\delta}=\sigma(c_\delta)c_\sigma\beta_{\sigma, \delta}$ for some $\beta_{\sigma, \delta}\in \Aut(\phi)$. 
But now, taking $\underline{\det}$ of both sides, we have 
$$
\frac{a}{\sigma\delta(a)}=\frac{\sigma(a)}{\sigma(\delta(a))}\frac{a}{\sigma(a)}\underline{\det}(\beta_{\sigma, \delta}). 
$$
Thus $\underline{\det}(\beta_{\sigma, \delta})=1$. Since $\underline{\det}: \Aut(\phi)\to K^\times$ is injective, we 
must have $\beta_{\sigma, \delta}=1$. Therefore, $c_{\sigma\delta}=\sigma(c_\delta)c_\sigma$ for all $\sigma,\delta\in G^\ast$. 
By Proposition \ref{propHilb90}, there is $S\in \GL_d(K'[\tau])$ such that $c_\sigma=(\sigma S)^{-1}S$ for all $\sigma\in G^\ast$. 
Put $\psi=S\phi S^{-1}$; this is a Drinfeld-Stuhler module isomorphic to $\phi$ over $K'$. For any $\sigma\in G^\ast$ we have 
$$
 \psi^\sigma = (S\phi S^{-1})^\sigma=(\sigma S)(\phi^\sigma )(\sigma S^{-1}) 
 =  (\sigma S)(c_\sigma \phi c_\sigma^{-1})(\sigma S)^{-1} = S \phi S^{-1}=\psi,
$$
so $\psi$ is defined over $L$. 
 \end{proof}
 
 \begin{rem}\label{rem611}
 Recall from Theorem \ref{thmEnd} that $\Aut(\phi)\cong \F_{q^s}^\times$ for some $s$ dividing $d$. Therefore, 
 the assumption in Theorem \ref{thmFMFD} can be replaced by a universal but stronger assumption that 
 $d$ and $q^d-1$ are coprime. Note that if $d=p^e$ is a power of the characteristic of $F$, then 
 the assumption of Theorem \ref{thmFMFD} is always satisfied. On the other hand, if $d=\ell$ is a prime 
 different from $p$, then the assumption is satisfied if and only if $\ell$ does not divide $q-1$. 
 \end{rem}

 \begin{thm}\label{thmFMFD2}
 Let $\phi$ be a Drinfeld-Stuhler $O_D$-module over $L^\sep$. Assume $L$ is a field of moduli 
 for $\phi$. Then $L$ is a field of definition for $\phi$ if and only if $O_D\otimes_A L\cong M_d(L)$. 
 \end{thm}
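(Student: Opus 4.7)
The plan is to split into the two directions. The forward implication is immediate from Lemma \ref{lemFD}: if $\phi$ descends to a Drinfeld-Stuhler $O_D$-module over $L$, then $\partial_{\phi, L}: O_D \otimes_A L \to M_d(L)$ is an isomorphism. For the converse, the strategy is to refine the argument of Theorem \ref{thmFMFD}. First, choose a finite Galois extension $K/L$ containing $\F_{q^r}$ over which $\phi$ and all isomorphisms $\lambda_\sigma: \phi \to \phi^\sigma$ ($\sigma \in G := \Gal(K/L)$) are defined, and form the pre-cocycle $\lambda_{\sigma\delta} = \sigma(\lambda_\delta) \lambda_\sigma \alpha_{\sigma, \delta}$ as in \eqref{eq-pre-cocycle}, with $\alpha_{\sigma, \delta} \in \Aut(\phi) = \F_{q^r}^\times$. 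The obstruction to modifying the $\lambda_\sigma$ by automorphisms into a true $1$-cocycle is the class $[\alpha] \in H^2(G, \F_{q^r}^\times)$; once this class is shown to vanish, one sets $\tilde\lambda_\sigma := \lambda_\sigma \mu_\sigma^{-1}$ for an appropriate $\mu: G \to \F_{q^r}^\times$ to obtain $\tilde\lambda_{\sigma\delta} = \sigma(\tilde\lambda_\delta)\tilde\lambda_\sigma$, applies Proposition \ref{propHilb90} to produce $S \in \GL_d(K[\tau])$ with $\tilde\lambda_\sigma = (\sigma S)^{-1} S$, and concludes that $\psi := S \phi S^{-1}$ is defined over $L$.

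The heart of the argument is to identify $[\alpha]$ with the Brauer class of $O_D \otimes_A L$ in $\Br(L)$. The plan is to apply $\partial$ to get $c_\sigma := \partial(\lambda_\sigma) \in \GL_d(K)$ satisfying $c_\sigma \partial_\phi(b) c_\sigma^{-1} = \sigma(\partial_\phi(b))$ for all $b \in O_D$, together with $c_{\sigma\delta} = \sigma(c_\delta) c_\sigma \beta_{\sigma, \delta}$ where $\beta_{\sigma, \delta} \in K^\times$ is the scalar matrix $\partial(\alpha_{\sigma, \delta})$. The reduction $\bar{c}_\sigma \in \PGL_d(K) = \Aut(M_d(K))$ is then a genuine $1$-cocycle, and the twisted semilinear action $\sigma \ast m := c_\sigma^{-1} \sigma(m) c_\sigma$ on $M_d(K)$ is the descent datum whose fixed algebra is the associated $L$-form. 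One checks that this fixed algebra coincides with $\partial_\phi(O_D \otimes_A L) \subset M_d(K)$: the inclusion follows from the conjugation relation combined with the semilinearity of $\ast$ (and the fact that elements of $L$ are $\sigma$-invariant scalars), while equality is forced by comparing $L$-dimensions, both of which are $d^2$ thanks to Lemma \ref{lemFD}. Thus $[\bar{c}]$ classifies the $L$-algebra $O_D \otimes_A L$, and the connecting homomorphism $H^1(G, \PGL_d(K)) \to H^2(G, K^\times)$ attached to $1 \to K^\times \to \GL_d(K) \to \PGL_d(K) \to 1$ sends $[\bar{c}]$ to $[\beta]$, which is therefore the Brauer class of $O_D \otimes_A L$ in $\Br(K/L) \subset \Br(L)$.

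To pass from the vanishing of the Brauer class to the vanishing of $[\alpha]$ itself, the plan is to use Kummer theory: since $q^r - 1$ is coprime to $\chr(L)$, the sequence $1 \to \mu_{q^r-1} \to (L^\sep)^\times \xrightarrow{x^{q^r-1}} (L^\sep)^\times \to 1$ is exact, and combined with Hilbert's Theorem 90 ($H^1(G_L, (L^\sep)^\times) = 0$) this yields an injection $H^2(G_L, \mu_{q^r-1}) \hookrightarrow H^2(G_L, (L^\sep)^\times) = \Br(L)$. The inflation of $[\alpha]$ to $H^2(G_L, \mu_{q^r-1})$ maps under this injection to $[\beta]$, i.e., to the Brauer class of $O_D \otimes_A L$; the hypothesis that this algebra is split kills that class, so $[\alpha]$ itself vanishes in $H^2(G_L, \mu_{q^r-1})$. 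By the standard continuity argument, after enlarging $K$ if necessary the trivialization descends to a finite level, producing $\mu_\sigma \in \F_{q^r}^\times$ with $\alpha_{\sigma, \delta} = \sigma(\mu_\delta) \mu_\sigma \mu_{\sigma\delta}^{-1}$, which completes the modification of the pre-cocycle and returns the argument to the track of Theorem \ref{thmFMFD}.

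The main obstacle is expected to be the cohomological identification in the second paragraph: one must carefully track the twisted semilinear action $\sigma \ast$ against the canonical Galois descent datum for $O_D \otimes_A L$ inside $M_d(K)$ via $\partial_\phi$, and verify that the connecting map of the central extension $1 \to K^\times \to \GL_d(K) \to \PGL_d(K) \to 1$ sends $[\bar{c}]$ to precisely $[\beta]$ (rather than its inverse) under the chosen sign conventions, so that the class captured by the cocycle $\alpha$ really matches the Brauer class of $O_D \otimes_A L$. Once this matching is established, the Kummer injectivity and the final application of Proposition \ref{propHilb90} proceed in parallel with the proof of Theorem \ref{thmFMFD}, with Hilbert's Theorem 90 for $K^\times$ replaced by its degree-$2$ analogue via Kummer combined with the splitting hypothesis on $O_D \otimes_A L$.
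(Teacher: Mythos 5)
Your proposal is correct, and both directions of your argument go through, but your route for the converse is genuinely different from the paper's. The paper never invokes the Brauer group or the connecting map for $1\to K^\times\to \GL_d(K)\to \PGL_d(K)\to 1$. Instead it exploits the splitting hypothesis concretely: it picks an element $e\in O_D\otimes_A L$ (possible because $O_D\otimes_A L\cong M_d(L)$) whose image under $\partial_{\phi,K}$ has a one-dimensional kernel $V(\phi,e)=K\omega$, shows that $M_\sigma:=\partial(\la_\sigma)$ carries this line to its Galois conjugate, and reads off scalars $\mu_\sigma\in K^\times$ with $\mu_{\sigma\delta}=\sigma(\mu_\delta)\mu_\sigma\,\partial(\alpha_{\sigma,\delta})$ from the action on $\omega$; it then reruns the endgame of Theorem \ref{thmFMFD} verbatim (take $(q^r-1)$-th powers, apply Hilbert 90 to $K^\times$, adjoin a root, kill the residual automorphism obstruction via injectivity of $\partial$ on $\Aut(\phi)$, and finish with Proposition \ref{propHilb90}). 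Your version replaces this eigenline construction with the abstract identification of $[\partial(\alpha)]$ with (up to sign) the class of $O_D\otimes_A L$ in $\Br(K/L)$, and replaces the $h$-th power/Hilbert 90 step with the Kummer injection $H^2(G_L,\mu_{q^r-1})\hookrightarrow \Br(L)$. Your descent computation does check out: transporting the canonical datum $\mathrm{id}\otimes\sigma$ on $O_D\otimes_A K$ through $\partial_{\phi,K}$ gives exactly $m\mapsto c_\sigma^{-1}\sigma(m)c_\sigma$ because $\la_\sigma\phi_b\la_\sigma^{-1}=\phi_b^\sigma$ yields $c_\sigma\partial_\phi(b)c_\sigma^{-1}=\sigma(\partial_\phi(b))$, so the fixed algebra is $\partial_{\phi,K}(O_D\otimes_A L)$ by the dimension count you indicate, and the sign ambiguity in the connecting map is harmless since you only need the class to vanish. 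What your approach buys is conceptual clarity — it exhibits the descent obstruction as precisely the Brauer class of $O_D\otimes_A L$, which makes the "if and only if" transparent and matches the structure of Jordan's theorem that the paper cites as the classical analogue. What the paper's approach buys is economy: it stays entirely within the elementary framework already set up for Theorem \ref{thmFMFD} and avoids the cohomological classification of forms of $M_d$. One point to spell out if you write this up fully: the final modification $\tilde\la_\sigma=\la_\sigma\mu_\sigma^{-1}$ (with $\mu_\sigma^{-1}$ interpreted in $\Aut(\phi)$ via $\partial^{-1}$) produces a priori only $\tilde\la_{\sigma\delta}=\sigma(\tilde\la_\delta)\tilde\la_\sigma\beta_{\sigma,\delta}$ with $\beta_{\sigma,\delta}\in\Aut(\phi)$ satisfying $\partial(\beta_{\sigma,\delta})=1$, and you must invoke the injectivity of $\partial$ on $\Aut(\phi)$ (Lemma \ref{lem6.7}) to conclude $\beta_{\sigma,\delta}=1$, exactly as the paper does.
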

 \begin{proof}
 If $L$ is a field of definition for $\phi$, then $O_D\otimes_A L\cong M_d(L)$ by Lemma \ref{lemFD}, so 
 the condition is necessary. 
 
 Now assume $O_D\otimes_A L\cong M_d(L)$. As in the proof of Theorem \ref{thmFMFD}, 
 let $K/L$ be a finite Galois extension such that $\phi$ is defined over $K$ 
and all isomorphisms of $\phi$ to $\phi^\sigma$ for every $\sigma\in \Gal(K/L)=:G$ are defined over $K$. 
We have $\partial_{\phi, K}: O_D\otimes_A K\xrightarrow{\sim} M_d(K)$. The group $G$ acts on 
$O_D\otimes_A K$ via its action on $K$. We fix an element $e\in O_D\otimes_A L $, and 
consider it as an element of $O_D\otimes_A K$ fixed by $G$. 

Denote 
$$
V(\phi, e)=\ker(\partial_{\phi, K}(e))\subset K^d. 
$$
For each $\sigma\in G$ choose an isomorphism $\la_\sigma: \phi\xrightarrow{\sim}\phi^\sigma$, and 
denote $M_\sigma=\partial(\la_\sigma)\in \GL_d(K)$. 
Then 
\begin{equation}\label{eqVphie}
M_\sigma V(\phi, e) = \sigma V(\phi, e),
\end{equation}
where $\sigma V(\phi, e)$ denotes the image of $V(\phi, e)$ under the action of $\sigma$ on $K^d$; we prove \eqref{eqVphie} 
in two steps: 
\begin{itemize}
\item[(i)] Because $\la_\sigma \phi =\phi^\sigma \la_\sigma$, we have 
$M_\sigma \partial_{\phi, K}(e)=\partial_{\phi^\sigma, K}(e)M_\sigma$. Hence the matrices $\partial_{\phi, K}(e)$ and 
$\partial_{\phi^\sigma, K}(e)$ have the same rank, which implies $\dim_K V(\phi,e) = \dim_K V(\phi^\sigma, e)$. 
If $v\in V(\phi, e)$, then 
 $\partial_{\phi, K}(e)(v)=0$, so $M_\sigma(v)\in V(\phi^\sigma, e)$. This implies that $M_\sigma 
 V(\phi,e)\subseteq V(\phi^\sigma, e)$. Since $\dim_K V(\phi,e) = \dim_K V(\phi^\sigma, e)$, we 
 must have $M_\sigma V(\phi,e)=V(\phi^\sigma, e)$.
 \item[(ii)] If $v\in V(\phi, e)$, then 
 $$
 \sigma(\partial_{\phi, K}(e)(v))=\partial_{\phi^\sigma, K}(\sigma(e))(\sigma(v))
 =\partial_{\phi^\sigma, K}(e)(\sigma(v))=0, 
 $$
 where we have used the assumption that $\sigma(e)=e$. Thus $\sigma(v)\in V(\phi^\sigma, e)$, which implies 
 $\sigma V(\phi, e) = V(\phi^\sigma, e)$. 
\end{itemize}

We can choose $e$ such that 
$V(\phi, e)=K\omega$ is one-dimensional, spanned by some $\omega\in K^d$. (Note that the rank of $e$ 
does not change whether we consider it as an element of $O_D\otimes_A L\cong M_d(L)$ or 
$O_D\otimes_A K\cong (O_D\otimes_A L)\otimes_L K\cong M_d(K)$.)
Then \eqref{eqVphie} becomes 
$$
M_\sigma \omega = \mu_\sigma \sigma(\omega) \quad \text{for some}\quad \mu_\sigma\in K^\times.  
$$
Using \eqref{eq-pre-cocycle}, we get 
\begin{equation}\label{eqMsigma}
M_{\sigma\delta}=(\sigma M_\delta) M_\sigma \partial(\alpha_{\sigma, \delta}),
\end{equation}
where $\alpha_{\sigma, \delta}\in \Aut(\phi)$ is the automorphism 
appearing in \eqref{eq-pre-cocycle}. By Lemma \ref{lem6.7}, $\partial(\alpha_{\sigma, \delta})$ is a scalar matrix, so 
applying both sides of \eqref{eqMsigma} to $\omega$, we get an equality in $K^\times$
$$
\mu_{\sigma\delta}=\sigma(\mu_\delta) \mu_\sigma \partial(\alpha_{\sigma, \delta}). 
$$

At this point one can repeat the argument in the proof of Theorem \ref{thmFMFD}, with $\partial(\alpha_{\sigma, \delta})$ 
playing the role of $\underline{\det}(\alpha_{\sigma, \delta})$, to obtain a Drinfeld-Stuhler module $\psi$ 
over $L$, which is isomorphic to $\phi$ over $L^\sep$. 
 \end{proof}

 \begin{rem}
	Theorem \ref{thmFMFD} is the analogue for Drinfeld-Stuhler modules of a theorem of Shimura for abelian varieties \cite[Thm. 9.5]{Shimura}, and  
	Theorem \ref{thmFMFD2} is the analogue of a 
	theorem of Jordan for abelian surfaces with quaternionic multiplication \cite[Thm. 1.1]{Jordan}. 
\end{rem}

\begin{cor}\label{corFMFD-DM}
	Let $\Phi: A\to L^\sep[\tau]$ be a Drinfeld $A$-module of rank $r\geq 1$. If $L$ is a field of moduli for $\Phi$, then $L$ 
	is also a field of definition. 
\end{cor}
\begin{proof}
	The claim follows from Theorem \ref{thmFMFD2} specialized to $O_D=M_r(A)$ and Theorem \ref{thmME-DSM}. 
	Alternatively, one can arrive at the same conclusion by repeating for $\Phi$ the argument in the proof of Theorem \ref{thmFMFD} with $d=1$ 
	(but arbitrary rank), 
	which becomes simpler since in that case Proposition \ref{propHilb90} is just the usual Hilbert's 90.
\end{proof}
\begin{rem}
	Corollary \ref{corFMFD-DM} is the analogue of 
	the well-known fact  that the fields of moduli for elliptic curves are fields of definition; cf. \cite[Prop. I.4.5]{SilvermanII}. 
	The proof for elliptic curves uses the $j$-invariant, an invariant which is not available for Drinfeld modules if $A$ is not 
	the polynomial ring or the rank is greater than $2$.  
\end{rem}

 In \cite{LRS}, $\sD$-elliptic sheaves are defined over any $\F_q$-scheme $S$. The functor 
which associates to $S$ the set of isomorphism classes of $\sD$-elliptic sheaves over $S$ modulo 
the action of $\Z$ possesses a coarse moduli scheme $X^\sD$ which is a smooth proper scheme 
over $C':=C-\mathrm{Ram}(D)-\{\infty\}$ of relative dimension $(d-1)$; 
this follows from Theorems 4.1 and 6.1 in \cite{LRS}, combined with the Keel-Mori theorem. 
Thanks to Theorems \ref{thmModMot} and \ref{thmShtMot}, the fibre of this moduli scheme 
over a (not necessarily closed) point $x$ of $C'$ is the coarse moduli space of isomorphism classes of Drinfeld-Stuhler 
$O_D$-modules over fields $L$ such that $z(\Spec(L))=x$ (recall from Definition \ref{defnDES} that $z$ is the morphism induced by $\gamma$). 
%The number of geometrically connected components of $X^\sD_F:=X^\sD\otimes_{C'} \Spec(F)$ is the class number of $A$. 
%(This can be deduced from the rigid-analytic uniformization of $X^\sD$ given by the 
%double-coset space in Proposition \ref{prop1.12}; cf. \cite{BS}.)

\begin{cor} Let $z(\Spec(L))=x$ and $X^\sD_x$ be the fibre of $X^\sD$ over the point $x$ of $C'$. 
\begin{enumerate}
\item Assume $d$ and $q^d-1$ are coprime. 
If $O_D\otimes_A L\not \cong M_d(L)$, then $X^\sD_x(L)=\emptyset$.
\item Assume $O_D\otimes_A L\cong M_d(L)$ and $X^\sD_x(L)\neq \emptyset$. Then 
there is a Drinfeld-Stuhler $O_D$-module defined over $L$. 
\end{enumerate}
\end{cor}
\begin{proof}
Given $a\in A-\F_q$ coprime with $\fr(D)$, 
one can consider the problem of classifying Drinfeld-Stuhler modules with level-$a$ 
structures, i.e., classifying pairs $(\phi, \iota)$, where $\phi$ 
is a Drinfeld-Stuhler $O_D$-module  and $\iota$ is an isomorphism $\iota: \phi[a]\cong O_D/aO_D$.%; cf. Lemma \ref{lem2.10}. 
 This moduli problem %, or rather its counterpart for $\sD$-elliptic sheaves, 
is representable; see \cite[(5.1)]{LRS}.  
Denote the corresponding moduli scheme by $X^{\sD, a}$. The forgetful map $(\phi, \iota)\mapsto \phi$ gives 
a Galois covering $X^{\sD, a}_{x}\to X^\sD_x$. % with Galois group isomorphic to $M_d(A/aA)$. 
Suppose there is an $L$-rational point $P$ on $X^\sD_x$. Then a preimage $P'$ of $P$ in $X^{\sD, a}_{x}$ 
is defined over a Galois extension $L'$ of $L$.  Since $X^{\sD, a}_{x}$ is a fine moduli scheme, there 
is a Drinfeld-Stuhler $O_D$-module $\phi$ defined over $L'$ which corresponds to $P'$. 
For any $\sigma\in \Gal(L'/L)$, the Drinfeld-Stuhler modules $\phi$ and $\phi^\sigma$ are isomorphic over $L'$, 
since $\phi$ arises from an $L$-rational point on $X^\sD_x$.  Hence $L$ is a field of moduli of $\phi$. 

If $d$ and $q^d-1$ are coprime, then Theorem \ref{thmFMFD} and Remark \ref{rem611}  imply 
that $\phi$ can be defined over $L$. Now Lemma \ref{lemFD} implies that $O_D\otimes_A L\cong M_d(L)$. This proves part (1). 
Part (2) follows from Theorem \ref{thmFMFD2}. 
\end{proof}
 
\begin{rem}
 It is known that in general the fields of moduli for abelian varieties are not necessarily fields of definition. 
 %This phenomenon already occurs for abelian surfaces with quaternionic multiplication over number fields; see \cite[1.2]{Jordan}. 
 For example, let $B$ be an indefinite quaternion division algebra over $\Q$, and let $X^B$ be the associated 
 Shimura curve over $\Q$, which is the coarse moduli scheme of abelian surfaces equipped with an action of $B$.  
 The main result in \cite{JLlocal} provides examples of nonarchimedean local fields $L$ failing to split $B$ with $X^B(L)\neq \emptyset$  
 (see also \cite[$\S$1]{Jordan});  a necessary condition for this phenomenon is that $2$ ramifies in $B$. 
 If we let $A=\F_q[T]$, $F=\F_q(T)$, and $d=2$, then $X^\sD_F$ is the function field analogue of $X^B$; cf. \cite{LRS}, \cite{PapLocProp}. 
 However, examples similar to those constructed by Jordan and Livn\'e 
 do not exist in this setting since for any finite extension $L$ of $F_v$, $v\in \Ram(D)$, which does not split $D$ 
 we have $X^{\sD}_F(L)=\emptyset$ by Theorem 4.1 in \cite{PapLocProp}. This leaves open the interesting question 
 whether in general the fields of moduli of Drinfeld-Stuhler modules are fields of definition.  
 \end{rem}
 
\subsection*{Acknowledgements} This work was mostly carried out during 
 my visit to the Max Planck Institute for Mathematics in Bonn in 2016.  
 I thank the institute for its hospitality, excellent working conditions, %stimulating environment, 
 and financial support. 
 I thank Gebhard B\"ockle, Urs Hartl, Rudolph Perkins, Fu-Tsun Wei, and Yuri Zarhin for helpful discussions related to the 
 topics of this paper.

% ------------------------------------------------------------------------

%\renewcommand{\bibliofont}{\normalsize}
%\bibliographystyle{amsplain}
%\bibliography{DSmodulesSep4.bib}

\providecommand{\bysame}{\leavevmode\hbox to3em{\hrulefill}\thinspace}

\end{document}